\newcommand{\R}{\ensuremath{\mathbb{R}}}
\newcommand{\CO}{\ensuremath{\mathcal{O}}}
\newcommand{\D}{\ensuremath{\mathcal{D}}}
\newcommand{\x}{\mathbf{x}}
\newcommand{\bb}{\mathbf{b}}
\newcommand{\sgn}{\mathrm{sign}}
\newcommand{\Int}{\mathrm{Int}}
\newcommand{\QQ}{\Theta_{\rm{\i v}}}
\newtheorem {theorem} {Theorem}
\newtheorem {proposition}{Proposition}
\newtheorem {corollary} {Corollary}
\newtheorem {lemma}  {Lemma}
\newtheorem {remark}{Remark}
\newtheorem{main}{Theorem}
\begin{document}
\renewcommand{\arraystretch}{1.5}

\title[Uniqueness and stability of limit cycles of sewing PWLS]
{Uniqueness and stability of limit cycles in planar piecewise linear differential systems without sliding region}

\author[V. Carmona, Fern\'{a}ndez-S\'{a}nchez, and D. D. Novaes]
{Victoriano Carmona$^1,$ Fernando Fern\'{a}ndez-S\'{a}nchez$^2,$\\ and Douglas D. Novaes$^3$}

\address{$^1$ Dpto. Matem\'{a}tica Aplicada II \& IMUS, Universidad de Sevilla, Escuela Polit\'ecnica Superior.
Calle Virgen de \'Africa 7, 41011 Sevilla, Spain.  Corresponding author.}
\email{vcarmona@us.es}

\address{$^2$ Dpto. Matem\'{a}tica Aplicada II \& IMUS, Universidad de Sevilla, Escuela T\'{e}cnica Superior de Ingenier\'{i}a.
Camino de los Descubrimientos s/n, 41092 Sevilla, Spain.} 
\email{fefesan@us.es} 

\address{$^3$Departamento de Matem\'{a}tica, Instituto de Matem\'{a}tica, Estat\'{i}stica e Computa\c{c}\~{a}o Cient\'{i}fica (IMECC), Universidade
Estadual de Campinas (UNICAMP), Rua S\'{e}rgio Buarque de Holanda, 651, Cidade Universit\'{a}ria Zeferino Vaz, 13083--859, Campinas, SP,
Brazil.} \email{ddnovaes@unicamp.br} 

\subjclass[2010]{34A26, 34A36, 34C05, 34C25.}

\keywords{Piecewise planar linear differential systems, Sewing systems, Limit cycles, Optimal uniform upper bound, Poincar\'{e} half-maps, Integral characterization}

\begin{abstract}
In this paper, we consider the family of planar piecewise linear differential systems with two zones separated by a straight line without sliding regions, that is, differential systems whose flow transversally crosses the switching line except for at most one point. In the research literature, many papers deal with the problem of determining the maximum number of limit cycles that these differential systems can have. This problem has been usually approached via large case-by-case analyses which distinguish the many different possibilities for the spectra of the matrices of the differential systems. Here, by using a novel integral characterization of Poincar\'e half-maps, we prove, without unnecessary distinctions of matrix spectra, that the optimal uniform upper bound for the number of limit cycles of these differential systems is one. In addition, it is proven that this limit cycle, if it exists, is hyperbolic and its stability is determined by a simple condition in terms of the parameters of the system.  As a byproduct of our analysis, a condition for the existence of the limit cycle is also derived.
\end{abstract}

\maketitle

\section{Introduction}
In this paper, we consider planar discontinuous piecewise linear differential systems with two zones separated by a straight line. Without loss of generality, these differential systems can be written as
\begin{equation}\label{s1}
\dot \x =
\left\{\begin{array}{l}
A_L\x+\bb_L, \quad\textrm{if}\quad x_1< 0,\\
A_R\x+\bb_R, \quad\textrm{if}\quad x_1> 0,
\end{array}\right.
\end{equation}
where $\x=(x_1,x_2)^T\in\R^2,$ $A_{L}=(a_{ij}^{L})_{2\times 2},$ $A_{R}=(a_{ij}^{R})_{2\times 2},$  $\bb_{L}=(b_1^{L},b_2^{L})^T\in\R^2,$ and $\bb_{R}=(b_1^{R},b_2^{R})^T\in\R^2.$ Notice that $\Sigma=\{(x_1,x_2)\in\R^2:\,x_1=0\}$ the switching line of system \eqref{s1}. We assume Filippov's convention (see \cite{Filippov88}) for the definition of the trajectories of \eqref{s1}. 

Moreover, we are interested in the family of such differential systems which do not have any sliding regions, that is, for each point in the switching line, except for at most one, the corresponding trajectory crosses it transversally. Such systems are sometimes called {\em sewing systems}. This can be analytically expressed by
\begin{equation}
\label{ecu:parabola}
(a^L_{12} \, x_2+b_1^L)(a^R_{12}\, x_2+b_1^R)\geqslant 0 \quad \mbox{for every} \quad x_2\in\mathbb{R}
\end{equation}
and 
\begin{equation}
\label{ecu:tangencypoint}
(a^L_{12} \, x_2+b_1^L)(a^R_{12}\, x_2+b_1^R)=0 \quad \mbox{at most at one value of} \quad x_2\in\mathbb{R}.
\end{equation}
Notice that, in this paper, the concept of  ``sliding region'' also includes the sometimes called in other papers ``escaping region''. 

We focus on determining an optimal uniform upper bound for the number of limit cycles of this family of differential systems. In the study of planar piecewise smooth differential systems, a  limit cycle is usually defined as a non-trivial closed crossing orbit which is isolated from other closed orbits.

In 1991, Lum and Chua \cite{LumChua91}, assuming the continuity of the differential system \eqref{s1} conjectured that they have at most one limit cycle. This conjecture was proven in 1998 by Freire et al. \cite{FreireEtAl98}.  Their proof was performed by distinguishing every possible configuration depending on the spectra of the matrices of the differential system. 
Recently, Carmona et al. \cite{CARMONA2021100992}  provided a new simple proof for Lum-Chua's conjecture using a {\it novel integral characterization} of {\it Poincar\'{e} half-maps}  (see \cite{CarmonaEtAl19}). In addition, this limit cycle, if it exists, has been proven to be hyperbolic and its stability was explicitly determined by an easy condition in terms of the parameters of the system. This novel characterization has proven to be an effective method to avoid the case-by-case study performed in the former proof and has also been used by the same authors in \cite{CarmonaEtAl23} to show the existence of a uniform upper bound for the maximum number of limit cycles of general planar piecewise linear differential systems with two zones separated by a straight line.

Dropping the continuity assumption, 
Freire et al. \cite{FreireEtAl2013} studied the limit cycles of the differential system \eqref{s1}  
assuming that each linear differential system has a real or virtual equilibrium of focus type 
(the concepts of {\em real}, {\em boundary} or {\em virtual},  referred to an equilibrium of a system corresponding to one of the zones of linearity, just describe if it is located, respectively, inside, on the boundary or outside this zone).
In this paper, the authors qualitatively described the different phase portraits taking into account the number of real focus equilibria, namely zero, one, or two. For the cases of zero and one real focus equilibrium, they obtained results on the existence and uniqueness of limit cycles. For the case of two real focus equilibria, based on extensive numerical simulations, they conjectured that such differential systems have at most one limit cycle. The case of two virtual foci had already been considered by Llibre et al. \cite{LlibreEtAl2008}, who obtained the uniqueness of limit cycles via a generalized criterium for Li\'{e}nard differential equations allowing discontinuities.  
The uniqueness of limit cycles for the node-node and saddle-saddle cases have been addressed by Huan and Yang in  \cite{HuanYang2013} and \cite{HuanYang2014}, respectively. Medrado and Torregrosa \cite{MedradoTorregrosa15} provided the uniqueness of limit cycles  assuming the existence of a monodromic singularity in the switching line. 
Their proof also  distinguishes some configurations depending on the spectra of matrices of the differential system. Li and Llibre \cite{LiLLibre2020} gave a proof of the uniqueness of limit cycles for the focus-saddle case.  Recently, Li et al. \cite{LI2021101045} provided the uniqueness of limit cycles for the focus-node and focus-focus scenarios, which were the remaining non-degenerate cases (that is, $\det(A_L)\det(A_R)\neq0$). By means of a different approach, Tao Li et al. \cite{Li_2020} also provided the uniqueness of limit cycles for the non-degenerate cases. These last two papers provided a positive answer for the conjecture stated in \cite{FreireEtAl2013} for the case of two real focus equilibria. However, taking into account all the previous results, a positive answer to the uniqueness of limit cycles of planar piecewise linear differential systems without sliding region was only obtained for the non-degenerate cases. As far as we are concerned in all previous case-by-case studies, the degenerate cases have not been exhausted. It is worth noting all the effort and time needed, the large number of papers devoted to different cases, and the specific techniques developed in each one of them to reach the result for the non-degenerate cases. 

In this paper we provide the uniqueness of limit cycles for piecewise linear differential systems without sliding region. Moreover, this result is proven in a unified way (with a single approach and a common technique) that does not distinguish cases depending on the spectrum of the matrices. This unified way allows us to establish the hyperbolicity of the limit cycle and also to determine its stability by a simple condition in terms of the parameters of system  \eqref{s1}. These results are collected in the statement of the main theorem of this paper. 

\begin{main}\label{main} 
Consider the planar piecewise linear differential system \eqref{s1}. Let $T_L$ and $T_R$ be the traces of the matrices $A_L$ and $A_R$, respectively. Denote
$a_{L}=a_{12}^{L}b_2^{L}-a_{22}^{L}b_1^{L}$, $a_{R}=a_{12}^{R}b_2^{R}-a_{22}^{R}b_1^{R}$, and $\xi=a_RT_L-a_LT_R$. If the differential system \eqref{s1} does not have sliding region (that is, conditions \eqref{ecu:parabola} and \eqref{ecu:tangencypoint} hold), then it has at most one limit cycle. This limit cycle, if it exists, is hyperbolic and $\xi\neq0.$ Moreover, it is asymptotically stable (resp. unstable) provided $\xi<0$  (resp. $\xi>0$).
\end{main}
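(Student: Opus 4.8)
The plan is to reduce the study of periodic orbits to a one-dimensional fixed-point problem governed by the two Poincar\'e half-maps, and then to extract uniqueness, hyperbolicity, and stability \emph{simultaneously} from a single sign computation. First I would bring each linear zone to the convenient second-order form obtained by using the velocity $\omega=\dot x_1$ across $\Sigma$ as the variable on the switching line: writing $\dot x_1=\omega$ and $\dot\omega=-\det(A)\,x_1+T\,\omega+a$ (with $T$ the trace and $a=a_{12}b_2-a_{22}b_1$ in the relevant zone), the crossing points of $\Sigma=\{x_1=0\}$ are parametrized by $\omega$, and the sewing conditions \eqref{ecu:parabola}--\eqref{ecu:tangencypoint} guarantee that the flow crosses transversally everywhere except at most at the single point where $\omega$ vanishes. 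This transversality is exactly what makes the forward left and right Poincar\'e half-maps $\omega\mapsto P_L(\omega)$ and $\omega\mapsto P_R(\omega)$ well defined, continuous, and as smooth as needed on suitable one-sided intervals.

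Next I would invoke the integral characterization of the Poincar\'e half-maps \cite{CarmonaEtAl19}: rather than solving the linear flow explicitly (which is what forces the spectral case-by-case split in the classical proofs), each half-map is described implicitly by an integral relation between its argument $\omega_0$ and its image $\omega_1$, with an integrand that is a fixed rational expression in the velocity depending on the zone only through $T$, $\det(A)$ and $a$. A crossing periodic orbit of \eqref{s1} meets $\Sigma$ at a top point and a bottom point, so it corresponds precisely to a pair $(\omega^+,\omega^-)$ that is simultaneously matched by $P_L$ and $P_R$; eliminating one variable, I would encode all limit cycles as the zeros of a single scalar displacement function $\Delta$ built from the two integral relations.

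The decisive step is to show that at every zero of $\Delta$ the derivative $\Delta'$ is nonzero and has a sign that depends only on $\xi=a_RT_L-a_LT_R$. Using the implicit characterization, the derivative of each half-map at a matched point is expressible through the same integrand evaluated along the orbit, and combining the two contributions the cross terms organize exactly into the combination $a_RT_L-a_LT_R$; thus one shows $\sgn\big(\Delta'(\omega^*)\big)=\sgn(\xi)$ (up to the fixed orientation convention) at each zero. This single fact yields everything at once: since all zeros of the continuous function $\Delta$ are transversal with the same sign, $\Delta$ can have at most one zero, which proves there is at most one limit cycle; $\Delta'\neq0$ gives hyperbolicity and, in particular, $\xi\neq0$; and the sign of $\Delta'$ translates into the composed multiplier $P_R'\,P_L'$ being less than or greater than one, i.e.\ into asymptotic stability when $\xi<0$ and instability when $\xi>0$.

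The main obstacle I anticipate is carrying out this sign computation uniformly. The integrands in the half-map characterization have denominators that are quadratics in $\omega$ whose nature changes with the spectra of $A_L$ and $A_R$, and the domains of $P_L$ and $P_R$ are limited by the tangency velocity $\omega=0$ and by possible escape to infinity; so the care lies in justifying the manipulations of these improper integrals and in proving the definite sign of $\Delta'$ without ever separating into focus/node/saddle or $\det(A)=0$ subcases. Handling the degenerate configurations (in particular $\det(A_L)\det(A_R)=0$, left open by previous case-by-case work) and the admissible single tangency point allowed by \eqref{ecu:tangencypoint} is precisely where the integral characterization must be pushed, and is where I would expect the real work of the proof to concentrate.
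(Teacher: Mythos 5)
Your scaffolding (velocity coordinate on $\Sigma$, integral characterization of the half-maps, a scalar displacement function whose transversal zeros with a common sign force uniqueness) is exactly the paper's, but your ``decisive step'' contains a genuine gap. From the integral characterization, the derivative of the displacement function at a matched point $(y_0^*,y_1^*)$ satisfies $\sgn(\delta'(y_0^*))=\sgn\bigl(F(y_0^*,y_1^*)\bigr)$ with
\[
F(y_0,y_1)=c_0+c_1\,y_0y_1+c_2\,(y_0+y_1),\qquad
c_0=a_La_R\,\xi,\quad c_1=a_RT_RD_L-a_LT_LD_R,\quad c_2=a_L^2D_R-a_R^2D_L,
\]
(this is Proposition \ref{prop:derivdisplafunct} and \eqref{eq:c0c1c2} in the paper). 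The cross terms do \emph{not} ``organize exactly into $a_RT_L-a_LT_R$'': the sign of $\delta'$ at a zero depends on where that zero sits relative to the hyperbola $\gamma=F^{-1}(\{0\})$, through the $c_1$ and $c_2$ terms; and even the constant term is $a_La_R\xi$, whose sign differs from $\sgn(\xi)$ whenever $a_La_R<0$. So the claim $\sgn(\Delta'(\omega^*))=\sgn(\xi)$ at \emph{every} zero cannot be obtained by a pointwise manipulation of the integral relations --- it is a global statement essentially equivalent to the theorem. The actual work in the paper is geometric: show that $\gamma$ meets the fourth quadrant in at most one branch, that each half-map graph crosses $\gamma$ at most once and transversally (Proposition \ref{prop:cortegamma}), and then, splitting according to the signs of $a_L$ and $a_R$ (with local data at the origin from the Taylor expansion of $\delta$ and asymptotic data at infinity), use trapping-region arguments to confine all intersection points of the two graphs to a single component of the fourth quadrant minus $\gamma$, where $F$ has a constant sign that only then gets identified with $\sgn(\xi)$ (Proposition \ref{prop:hyperlimitcycle}).

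A second, independent gap: you assert that $\Delta'$ is nonzero at every zero, but nothing in your argument excludes degenerate zeros ($\delta(y_0^*)=\delta'(y_0^*)=0$), i.e.\ non-hyperbolic limit cycles; this cannot be ruled out by the same sign computation, since that computation only constrains the sign \emph{when} it is nonzero. The paper needs a separate argument (Proposition \ref{prop:dlc}): at a degenerate zero one shows $\delta''(y_0^*)\neq0$ and that the derivative of $\delta$ with respect to a suitable parameter ($T_L$ if $a_L>0$, $a_L$ if $a_L<0$) is nonzero, so a saddle-node unfolding would create two hyperbolic limit cycles, contradicting the uniqueness of hyperbolic cycles already established. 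Without this step --- or some substitute for it --- your proposal proves at most ``at most one \emph{hyperbolic} limit cycle,'' not the full statement that any limit cycle is hyperbolic, unique, and has stability governed by $\xi$.
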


The proof of Theorem \ref{main} 
is a direct consequence of propositions \ref{prop:lienard}, \ref{prop:hyperlimitcycle}, and \ref{prop:dlc}, as follows: Proposition \ref{prop:lienard} recalls the normal form \eqref{cf} for  piecewise linear differential systems given by \eqref{s1}; then, Proposition \ref{prop:hyperlimitcycle} provides the uniqueness of hyperbolic limit cycles and characterizes its stability for piecewise linear differential systems without sliding region; lastly, Proposition \ref{prop:dlc} concludes that such systems do not admit non-hyperbolic limit cycles.

The main basis of this paper lies in an original integral characterization, presented in \cite{CarmonaEtAl19}, of the {\it Poincar\'{e} half-maps} for planar linear differential systems associated to a straight line. This novel  characterization 
is introduced in Section \ref{sec:properties}, where some properties of these half-maps, interesting for our study, are collected. As usual, the study of the crossing limit cycles for a two-zonal piecewise linear differential system 
 is done by means of the analysis of zeros of an appropriate displacement function given as the difference between the Poincar\'{e} half-maps associated to the switching line. Section \ref{sec:displfunct} is devoted to introduce and analyse the displacement function by using the results stated in Section \ref{sec:properties}. More specifically, we provide suitable expressions for its first and second derivatives. The local behaviour of the displacement function around monodromic singularities on $\Sigma$ and, when it is appropriate, at the infinity is established in Section \ref{sec:stability}. 
Some results on the existence of limit cycles are given in Section \ref{sec:condlimitcycle}. In particular,
 Corollary \ref{cor:existence} provides an extension to sewing piecewise linear differential systems of the results about existence of limit cycles for continuous differential systems given in [5]. In Section \ref{sec:hyperlimitcycle}, we show the uniqueness of hyperbolic limit cycles (see Proposition \ref{prop:hyperlimitcycle}) for piecewise linear differential systems without sliding regions. Moreover, the stability of this unique limit cycles,  if it exists, is determined by a simple condition in terms of the parameters of the system. Finally, Section \ref{sec:dlc} is dedicated to showing  that  the considered differential systems do not admit degenerate limit cycles (see Proposition \ref{prop:dlc}).

\section{Li\'enard canonical form and Poincar\'{e} half-maps
}\label{sec:properties}
First of all, notice that if $a_{12}^{L}=0$, then the first equation of system \eqref{s1} for $x_1<0$ becomes $\dot x_1=a_{11}^L x_1+b^L_1$, whose solutions are all monotonic and prevents the existence of periodic solutions for system \eqref{s1}. With a similar reasoning, $a_{12}^{R}=0$ also prevents the existence of limit cycles for system \eqref{s1}. Thus, the condition $a_{12}^{L}a_{12}^{R}\ne 0$ is necessary for the existence of periodic solutions of system \eqref{s1}. Under this hypothesis, the non-sliding conditions \eqref{ecu:parabola} and \eqref{ecu:tangencypoint} are equivalent to 
\begin{equation}
\label{eq:nslc}
  a^L_{12} a^R_{12}>0 \quad \mbox{and} \quad a_{12}^L \, b_1^R=a_{12}^R \, b_1^L.
\end{equation}
From now on, we assume that the differential system \eqref{s1} satisfies condition \eqref{eq:nslc}.

A natural first step in the analysis of any parameterized differential system consists in writing it in a suitable normal form. The following proposition, whose proof appeared in \cite{FreireEtAl12} for a more general case, is devoted to it. 
\begin{proposition}
\label{prop:lienard}
Under assumption \eqref{eq:nslc}, the differential system \eqref{s1} is reduced into the following Li\'enard canonical form 
\begin{equation}\label{cf}
\left\{\begin{array}{l}
\dot x= T_L x-y\\
\dot y= D_L x-a_L
\end{array}\right.\quad \text{for}\quad x< 0,
\quad 
\left\{\begin{array}{l}
\dot x= T_R x-y\\
\dot y= D_R x-a_R
\end{array}\right.\quad \text{for}\quad x> 0,
\end{equation}
by a homeomorphism preserving the switching line $\Sigma=\{(x,y)\in\R^2:\,x=0\}$. Furthermore, $a_{L}=a_{12}^{L}b_2^{L}-a_{22}^{L}b_1^{L},$ $a_{R}=a_{12}^{R}b_2^{R}-a_{22}^{R}b_1^{R},$ and $T_L,$ $T_R$ and $D_L,$ $D_R$ are, respectively, the traces and determinants of the matrices $A_L$ and $A_R$.
\end{proposition}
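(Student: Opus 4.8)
The plan is to apply, separately in each of the two half-planes, the classical Li\'enard (companion) change of variables that carries a planar linear field to its trace--determinant normal form, and then to glue the two pieces continuously along $\Sigma$ by means of the non-sliding condition \eqref{eq:nslc}. First I would dispose of a single zone: for a linear field $\dot\x=A\x+\bb$ with $A=(a_{ij})$ and $\bb=(b_1,b_2)^T$, and with $a_{12}\neq0$ (guaranteed by \eqref{eq:nslc}), I consider the affine map $x=x_1$, $y=a_{22}x_1-a_{12}x_2-b_1$. A direct substitution shows that in the new variables the field becomes $\dot x=Tx-y$, $\dot y=Dx-a$, where $T=a_{11}+a_{22}=\tra A$, $D=a_{11}a_{22}-a_{12}a_{21}=\det A$, and $a=a_{12}b_2-a_{22}b_1$. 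Applying this separately to $(A_L,\bb_L)$ and $(A_R,\bb_R)$ produces, on each open half-plane, precisely the two fields of \eqref{cf} with the stated traces $T_L,T_R$, determinants $D_L,D_R$, and affine terms $a_L,a_R$; and since $x=x_1$ in both zones, each map takes $\Sigma=\{x_1=0\}$ onto $\{x=0\}$, so the switching line is preserved.

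The real content is the gluing. Since the two half-plane maps are built from different data, their restrictions to $\Sigma$ need not coincide, so the piecewise map they define may be discontinuous; to repair this I would allow a zone-dependent positive rescaling, replacing each map by $x=\alpha x_1$, $y=\alpha(a_{22}x_1-a_{12}x_2-b_1)$. This keeps the equations in the form $\dot x=Tx-y$ and preserves the coefficient $D$ of $x$ in the second equation. On $\Sigma$ the left and right maps read $x_2\mapsto-\alpha_L(a_{12}^Lx_2+b_1^L)$ and $x_2\mapsto-\alpha_R(a_{12}^Rx_2+b_1^R)$, so continuity across $\Sigma$ amounts to
\begin{equation*}
\alpha_L a_{12}^L=\alpha_R a_{12}^R\qquad\text{and}\qquad \alpha_L b_1^L=\alpha_R b_1^R.
\end{equation*}
I would then observe that positive numbers $\alpha_L,\alpha_R$ solving this system exist exactly because of \eqref{eq:nslc}: the inequality $a_{12}^La_{12}^R>0$ lets one take the positive ratio $\alpha_L/\alpha_R=a_{12}^R/a_{12}^L$, and the equality $a_{12}^Lb_1^R=a_{12}^Rb_1^L$ (geometrically, the two boundary functions $a_{12}x_2+b_1$ on $\Sigma$ share the same zero, i.e.\ the common tangency point) makes the two equations compatible.

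With such $\alpha_L,\alpha_R$ fixed, the two affine pieces combine into a single piecewise-affine, orientation-preserving bijection of $\R^2$ that is continuous across $\Sigma$ together with its inverse, hence a homeomorphism preserving $\Sigma$, carrying \eqref{s1} to \eqref{cf}. The main obstacle I anticipate is precisely this continuity matching along $\Sigma$: without \eqref{eq:nslc} the two zones cannot be reconciled by any $\Sigma$-preserving homeomorphism, so the bulk of the argument is checking solvability of the matching system under \eqref{eq:nslc} and the bijectivity and bicontinuity of the glued map. The identifications of $T_{L,R}$ and $D_{L,R}$ with the traces and determinants, and of the affine terms with the combinations $a_{12}b_2-a_{22}b_1$, are then read off from the single-zone computation, the zone-wise scaling being a harmless normalization of the affine terms; the precise bookkeeping that yields \eqref{cf} in the stated form is exactly what is carried out, in greater generality, in \cite{FreireEtAl12}, so I would invoke it here and treat these final verifications as routine.
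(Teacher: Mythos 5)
Your single-zone computation and your identification of the gluing obstruction are both correct, and they reproduce the construction behind the reference that the paper itself invokes (the paper offers no proof of this proposition beyond citing \cite{FreireEtAl12}). The genuine gap is your final claim that the zone-wise scaling is ``a harmless normalization of the affine terms.'' It is not, and it cannot be repaired. Continuity across $\Sigma$ forces $\alpha_L a_{12}^L=\alpha_R a_{12}^R$, and one can check (for instance via the centralizer of the companion matrix) that any $\Sigma$-preserving affine conjugacy of a single zone onto a Li\'enard system is necessarily your companion map composed with a positive homothety; so after gluing, the only remaining freedom is a \emph{common} positive factor. Hence what your homeomorphism actually produces is \eqref{cf} with affine terms $\alpha_L a_L$ and $\alpha_R a_R$, i.e., up to a common positive factor, the pair $\bigl(a_L,\tfrac{a_{12}^L}{a_{12}^R}a_R\bigr)=\bigl(a_L,\;a_{12}^Lb_2^R-a_{22}^Rb_1^L\bigr)$ (the last equality uses \eqref{eq:nslc}), and \emph{not} the pair $(a_L,a_R)$ of the statement unless $a_{12}^L=a_{12}^R$. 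Since the pair of affine terms is a canonical-form invariant only up to a common positive factor, and since the sign of $\xi=a_RT_L-a_LT_R$ --- the whole reason these coefficients appear in Theorem \ref{main} --- is not preserved under \emph{independent} positive rescalings of $a_L$ and $a_R$, this discrepancy is substantive, not bookkeeping.

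To see that your step genuinely fails, take the left zone already in Li\'enard form with $(T_L,D_L,a_L)=(1,1,2)$ and the right zone $\dot x_1=-x_1-4x_2$, $\dot x_2=\tfrac12 x_1+1$, so that $T_R=-1$, $D_R=2$, $a_{12}^R=-4$, and \eqref{eq:nslc} holds. The statement's formulas give $a_R=-4$ and $\xi=-2<0$. Your gluing forces $\alpha_L=4\alpha_R$, so your homeomorphism carries this system onto \eqref{cf} with affine terms proportional to $(2,-1)$, for which $\xi^{\mathrm{cf}}=(-1)(1)-(2)(-1)=1>0$: the two identifications predict opposite stabilities. The second one is the correct one: applying the paper's canonical-form results to the $(2,-1)$ system (Proposition \ref{prop:ep} gives an attracting origin since $\xi^{\mathrm{cf}}>0$; Proposition \ref{prop:vi-vi} gives an attracting infinity since $c_\infty=1>0$; Corollary \ref{cor:existence} gives existence; propositions \ref{prop:hyperlimitcycle} and \ref{prop:dlc} give uniqueness and instability), it has exactly one limit cycle and that cycle is repelling, and your homeomorphism transports this picture back to the original system, where $\xi=-2<0$ would predict a stable cycle. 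So, carried out honestly, your construction proves the proposition with $a_R$ replaced by $a_{12}^Lb_2^R-a_{22}^Rb_1^L$ (equivalently, with the pair of affine terms fixed only up to the forced ratio $a_{12}^R/a_{12}^L$); it does not yield the literal statement, and no choice of gluing can, because the two candidate canonical forms above are not even topologically equivalent (one has a limit cycle, the other, by the same battery of results, has none). A correct write-up must carry the factor $a_{12}^L/a_{12}^R$ explicitly into the parameter identification, or first normalize the system so that $a_{12}^L=a_{12}^R$, rather than dismiss it.
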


As said before, the study of the crossing limit cycles for the differential system \eqref{cf}
 is done by means of the analysis of the zeros of an appropriate displacement function given as the difference between the Poincar\'{e} half-maps associated to the switching line $\Sigma$. In what follows we introduce such maps, namely, the {\it Forward Poincar\'{e} Half-Map}  $y_L: I_L\subset [0,+\infty) \longrightarrow(-\infty,0]$ and  the {\it Backward Poincar\'{e} Half-Map} $y_R:I_R\subset [0,+\infty)\rightarrow (-\infty,0]$, whose graphs are contained in the fourth quadrant
 \[
\QQ:=\{(y_0,y_1)\in\mathbb{R}^2:\,y_0\geq0,\, y_1\leq0\}.
 \]

The forward Poincar\'{e} half-map takes a point $(0,y_0)$, with $y_0\geq0$, and maps it to a point $(0,y_L(y_0))$ by following the forward flow of system \eqref{cf}. More specifically, 
let $\varphi(t;y_0)=(\varphi_1(t;y_0),\varphi_2(t;y_0))$ be the orbit of system \eqref{cf} satisfying $\varphi(0;y_0)=(0,y_0)$. If there exists a value $\tau_L(y_0)>0$ such that $\varphi_1(\tau_L(y_0);y_0)=0$ and $\varphi_1(t;y_0)<0$ for every $t\in(0,\tau_L(y_0))$, we define  $y_L(y_0)=\varphi_2(\tau_L(y_0);y_0)\leqslant0$ (see Figure \ref{fig:poincare}(a)). In addition, if for every $\varepsilon>0$ there exist $y_0\in(0,\varepsilon)$ and $y_1\in(-\varepsilon,0)$ such that $y_L(y_0)=y_1$, the left Poincar\'e half-map can be extended to $y_0=0$ with $y_L(0)=0$, even if the above positive time $\tau_L(0)$ does not exist (see Figure \ref{fig:poincare}(b)).

\begin{figure}[H]
    \begin{center}
    \begin{tabular}{cc}
     \includegraphics[width=5cm]{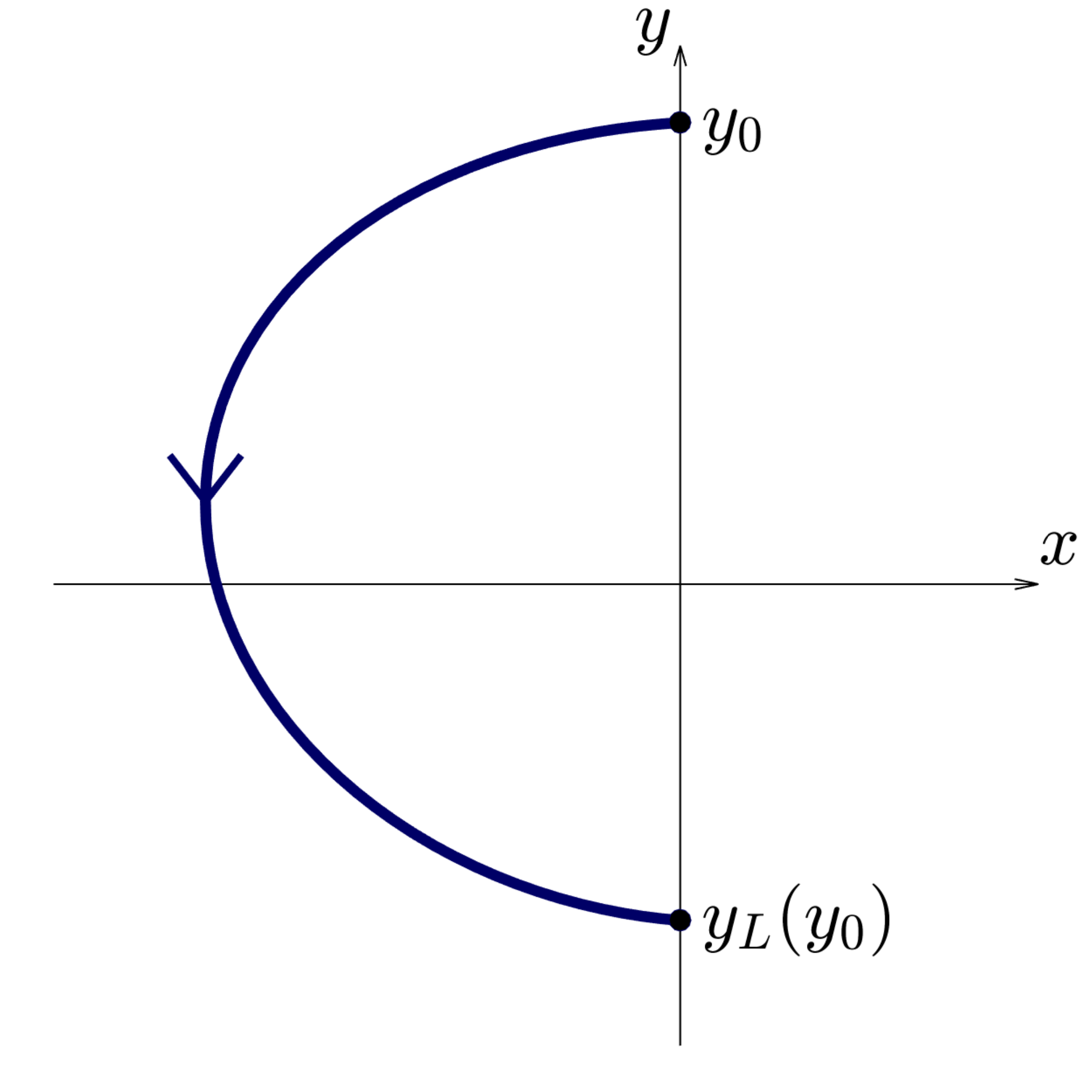}& \hspace{0.5cm}
     \includegraphics[width=5cm]{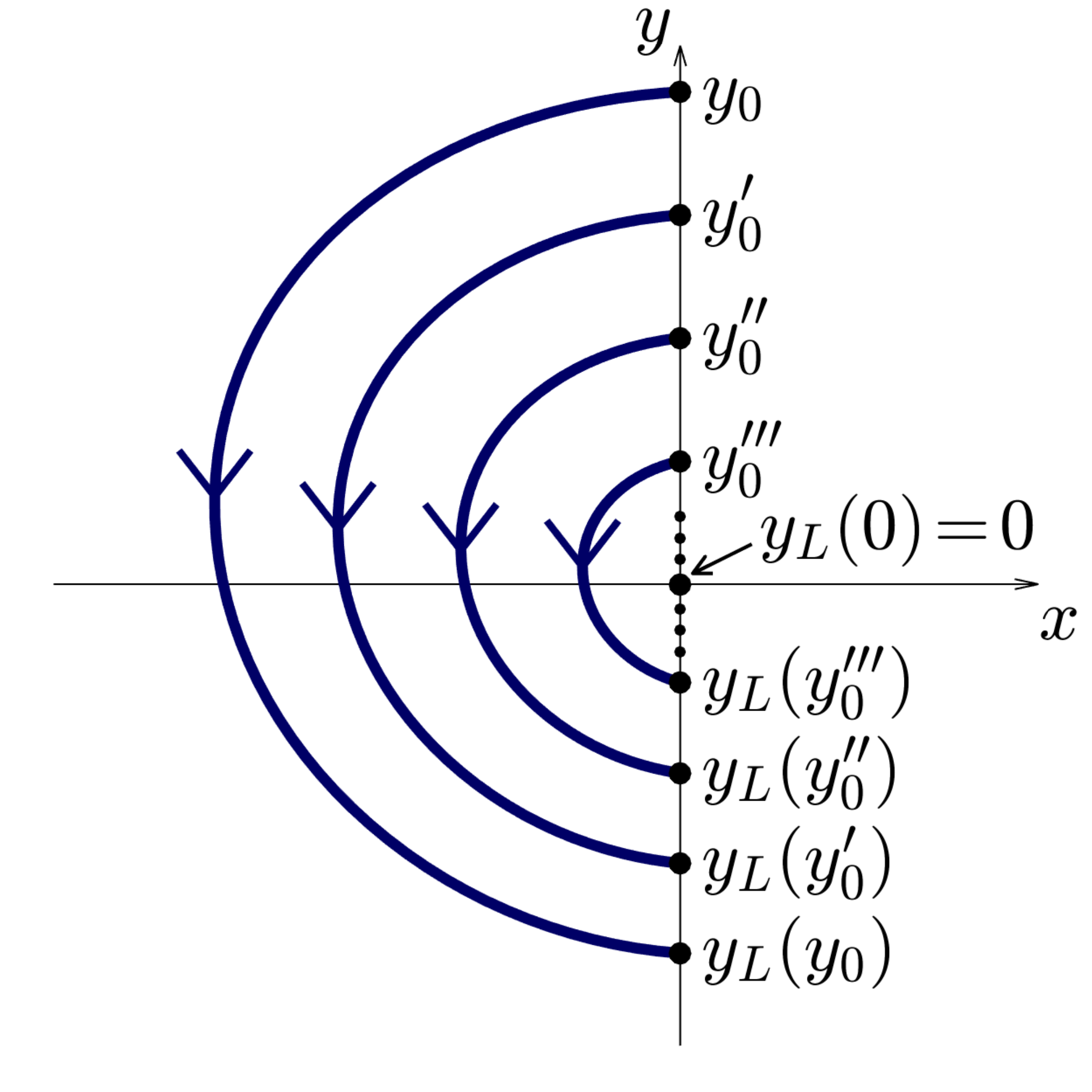}\\ 
     {\footnotesize (a)} & \hspace{0.5cm} {\footnotesize (b)}
     \end{tabular}
    \end{center}
     \caption{Definition of the forward Poincar\'{e} half-map.}\label{fig:poincare}
\end{figure}

Analogously, the backward Poincar\'{e} half-map takes a point $(0,y_0)$, with $y_0\geq 0$, and maps it to a point $(0,y_R(y_0))$ by following the backward flow of \eqref{cf}. More specifically, if there exists a value $\tau_R(y_0)<0$ such that $\varphi_1(\tau_R(y_0);y_0)=0$ and $\varphi_1(t;y_0)>0$ for every $t\in(\tau_R(y_0),0)$, we define  $y_R(y_0)=\varphi_2(\tau_R(y_0);y_0)\leqslant0$. Again, if for every $\varepsilon>0$ there exist $y_0\in(0,\varepsilon)$ and $y_1\in(-\varepsilon,0)$ such that $y_R(y_0)=y_1$, the right Poincar\'e half-map can be extended with $y_R(0)=0$, even if the above negative time $\tau_R(0)$ does not exist.

In Figure \ref{fig:displacement}, we illustrate the Poincar\'{e} half-maps defined above and their intervals of definition when the left linear system has a real focus and the right linear system has a real saddle.

In the next theorems \ref{teo:defyL} and \ref{teo:defyR}, we will present an integral characterization of the Poincar\'{e} half-maps above which was introduced in \cite{CarmonaEtAl19}. For that, we will need the following concept of {\it Cauchy Principal Value}:
 \[
\operatorname{PV}\left\{\int_{y_1}^{y_0}f(y)dy\right\}:=\lim_{\varepsilon\searrow 0} \left(\int_{y_1}^{-\varepsilon}f(y)dy+\int_{\varepsilon}^{y_0}f(y)dy\right),
\]
 for $y_1<0<y_0$ and $f$  continuous in $[y_1,y_0]\setminus \{0\}$ (see, for instance, \cite{henrici}). Note that if $f$ is also continuous in $0$, then the Cauchy Principal Value coincides with the definite integral.

Since the flow of \eqref{cf} is oriented in anticlockwise direction, the forward Poincar\'{e} half-map, $y_L$, is determined by the following linear differential system
\begin{equation}
\label{ecu:linealL}
\left\{\begin{array}{l}
\dot x= T_L x-y,\\
\dot y= D_L x-a_L,
\end{array}\right.
\end{equation}
which matches the left linear system of \eqref{cf}. Accordingly, its definition, its domain $I_L$, and its analyticity are given by Theorem 19, Corollary 21, Corollary 24, and Remark 16 of \cite{CarmonaEtAl19}.
In the following theorem, we summarize the mentioned results.
\begin{theorem}
\label{teo:defyL}
Let us consider system \eqref{ecu:linealL} and define the polynomial 
\begin{equation}\label{eq:polyL}
W_L(y)=D_Ly^2-a_LT_Ly+a_L^2.
\end{equation}
The forward Poincar\'{e} half-map $y_L$ is well defined if, and only if, $a_L\leqslant 0$ and $4D_L-T_L^2>0$, or $a_L>0$. In this case, its interval of definition $I_L:= [\lambda_L,\mu_L)\subset[0,+\infty)$ is non-empty and the following statements hold:
\begin{enumerate}[label = {(\alph*)}]
\item \label{muL} The right endpoint $\mu_L$ of the interval $I_L$ is the smallest strictly positive root of the polynomial $W_L$,  if it exists. Otherwise, $\mu_L=+\infty$.
\item \label{lambdaL} The left endpoint $\lambda_L$ of the interval $I_L$ is strictly positive if, and only if, $a_L<0$, $4D_L-T_L^2>0$, and $T_L<0$. In this case, $y_L(\lambda_L)=0$.
\item  \label{yL(IL)left} The left endpoint of the interval $y_L(I_L)$ is the largest strictly negative root of the polynomial $W_L$, if it exists. In the opposite case, this left endpoint is $-\infty$. 

\item\label{yL(IL)right} The right endpoint of the interval $y_L(I_L),$ that is $y_L(\lambda_L)$, is strictly negative if, and only if, $a_L<0$, $4D_L-T_L^2>0$, and $T_L>0$. In this case, $\lambda_L=0$.

\item\label{signWL} The polynomial $W_L$ satisfies $W_L(0)=a_L^2>0$ for $a_L\ne 0$  and $W_L(0)=0$ for $a_L=0$. Moreover,  $W_L(y)>0$ for $y \in \operatorname{ch}(I_L\cup y_L(I_L))\setminus\{0\}$, where $\operatorname{ch} (\cdot)$ denotes the convex hull of a set.
\item The forward Poincar\'{e} half-map $y_L$ is the unique function $y_L: I_L\subset [0,+\infty) \longrightarrow(-\infty,0]$
that satisfies 
\begin{equation}\label{integralF}
\operatorname{PV}\left\{\int_{y_L(y_0)}^{y_0}\dfrac{-y}{W_L(y)}dy\right\}= q_L(a_L,T_L,D_L)
\quad \mbox{for} \quad y_0\in I_L, 
\end{equation}
where
\begin{equation}
\label{ecu:qL}
q_L(a_L,T_L,D_L)=\left\{
\begin{array}{ccl}
0 & \mathrm{if} & a_L>0, \\
\frac{\pi T_L}{D_L\sqrt{4D_L-T_L^2}}
& \mathrm{if} & a_L=0, \\
\frac{2\pi T_L}{D_L\sqrt{4D_L-T_L^2}}
& \mathrm{if} & a_L<0.
\end{array}
\right.
\end{equation}
\item\label{rm:derivadaL} The graph of the forward Poincar\'{e} half-map, oriented according to increasing $y_0$, is the portion included in the fourth quadrant $\QQ$ of a particular orbit of the  
cubic vector field
\begin{equation}\label{dy1L}
X_L(y_0,y_1)=-\big(y_1W_L(y_0) ,y_0 W_L(y_1)\big).
\end{equation}
Equivalently,  the forward Poincar\'{e} half-map is a solution of the differential equation \begin{equation}
\label{eq:odeL}
y_1W_L(y_0)dy_1-y_0W_L(y_1)dy_0=0.
\end{equation}

\item\label{analyticL} The forward Poincar\'e half-map $y_L$ is analytic in $\Int(I_L)$. Moreover, $y_L$ is analytic in $I_L$ if, and only if, $\lambda_L=0$.
\end{enumerate}
 \end{theorem}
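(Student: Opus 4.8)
The plan is to establish everything on two identities that hold for \emph{every} spectral type of the matrix $A_L=\left(\begin{smallmatrix} T_L & -1\\ D_L & 0\end{smallmatrix}\right)$, so that no case distinction is ever needed. Throughout, let $\varphi(t;y_0)$ be the orbit of \eqref{ecu:linealL} with $\varphi(0;y_0)=(0,y_0)$, and let $\tau=\tau_L(y_0)>0$ be the return time, so $\varphi(\tau;y_0)=(0,y_1)$ with $y_1=y_L(y_0)$. The first identity is a quadratic Lyapunov-type relation: I would exhibit the quadratic form
\[
P(x,y)=D_L^2x^2-T_LD_Lxy+D_Ly^2+a_L(T_L^2-2D_L)x-a_LT_Ly+a_L^2,
\]
and verify by a finite computation that $\dot P=T_LP$ along the flow of \eqref{ecu:linealL}; this is a polynomial identity in $x,y$ and the parameters, using no assumption on the spectrum. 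Since $P(0,y)=W_L(y)$ (the polynomial \eqref{eq:polyL}), integrating $\tfrac{d}{dt}P(\varphi(t))=T_LP(\varphi(t))$ from $0$ to $\tau$ yields the clean relation $W_L(y_1)=e^{T_L\tau}W_L(y_0)$.

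The second identity controls the derivative via a Wronskian. The variation $u(t)=\partial_{y_0}\varphi(t)$ and the velocity $w(t)=\dot\varphi(t)$ both solve the homogeneous variational equation $\dot{(\cdot)}=A_L(\cdot)$, so by Liouville's formula $\det[u(t)\,|\,w(t)]=\det[u(0)\,|\,w(0)]\,e^{T_Lt}=y_0\,e^{T_Lt}$, using $u(0)=(0,1)^T$ and $w(0)=(-y_0,-a_L)^T$. Differentiating the return relations $\varphi_1(\tau;y_0)=0$ and $y_1=\varphi_2(\tau;y_0)$ lets me eliminate $\tau'(y_0)$, and evaluating the Wronskian at $t=\tau$ (where $w(\tau)=(-y_1,-a_L)^T$) collapses to $y_1\,y_L'(y_0)=y_0\,e^{T_L\tau}$. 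Combining the two identities gives $y_1\,y_L'(y_0)\,W_L(y_0)=y_0\,W_L(y_1)$, which is exactly the differential equation \eqref{eq:odeL} and the cubic field \eqref{dy1L} of item (g). Integrating its separated form $\tfrac{y_1\,dy_1}{W_L(y_1)}=\tfrac{y_0\,dy_0}{W_L(y_0)}$ shows that $\operatorname{PV}\int_{y_1}^{y_0}\tfrac{-y}{W_L(y)}\,dy$ is constant along the half-map, giving the integral characterization \eqref{integralF} once the constant is named $q_L$.

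Next I would read off the domain and the endpoints, items (a)–(e), directly from the cubic field restricted to $\QQ$. In the interior $\{y_0>0,\ y_1<0\}$ one has $\dot y_0=-y_1W_L(y_0)>0$ and $\dot y_1=-y_0W_L(y_1)<0$ wherever $W_L>0$, so the graph moves with $y_0$ increasing and $y_1$ decreasing until $y_0$ meets the smallest positive root of $W_L$ (this is $\mu_L$, item (a)), or $y_1$ meets the largest negative root (item (c)), or the orbit reaches an axis $y_1=0$ or $y_0=0$, producing the two alternatives in (b) and (d); positivity of $W_L$ on the swept interval is precisely (e), with $W_L(0)=a_L^2$ explaining why the principal value is needed only when $a_L=0$. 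Well-definedness then reduces to deciding when a connecting orbit from $y_0>0$ to $y_1<0$ exists in $\QQ$: the focus condition $4D_L-T_L^2>0$ always forces a return, while for $a_L>0$ the position of the equilibrium relative to $\Sigma$ guarantees one for every spectral type, yielding the stated dichotomy. Analyticity (item (h)) follows from analytic dependence of the solutions of \eqref{eq:odeL} at each interior point, where the field is non-degenerate ($y_0>0$, $y_1<0$, $W_L\neq0$); the loss of analyticity at a left endpoint $\lambda_L>0$ arises because there item (b) forces $y_L(\lambda_L)=0$, i.e.\ the graph meets the axis $y_1=0$ with vertical tangency.

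I expect the main obstacle to be the uniform evaluation of the constant $q_L$ in \eqref{ecu:qL} across the three regimes $a_L>0$, $a_L=0$, $a_L<0$. The conserved integral fixes $q_L$ as the common value $\Phi(y_1)-\Phi(y_0)$ with $\Phi'(y)=\tfrac{y}{W_L(y)}$, and to compute it one evaluates along a limiting orbit, which forces identifying the net rotation angle of the flow ($0$, $\pi$, or $2\pi$). It is exactly here that the focus condition and the Cauchy principal value (needed because $W_L(0)=0$ when $a_L=0$) enter. Carrying out this limit \emph{uniformly} — rather than separately integrating the $\arctan$/$\operatorname{artanh}$ primitives of $y/W_L$ in each spectral case — is the delicate step, and it is what keeps the whole argument free of the case-by-case analysis that the paper is designed to avoid.
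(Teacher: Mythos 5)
Your two key identities are correct, and I verified both by direct computation: the quadratic $P(x,y)$ does satisfy $\dot P=T_LP$ along the flow of \eqref{ecu:linealL} (a pure polynomial identity, no spectral hypothesis), and the Liouville/Wronskian argument correctly collapses to $y_1\,y_L'(y_0)=y_0\,e^{T_L\tau}$; combined with $W_L(y_1)=e^{T_L\tau}W_L(y_0)$ this gives the differential equation \eqref{eq:odeL} and the constancy in $y_0$ of the principal-value integral. That is a clean, case-free derivation of item (g) and of the ``existence of a conserved quantity'' half of item (f). For calibration, note that the paper itself does not prove Theorem \ref{teo:defyL}: it is imported from \cite{CarmonaEtAl19} (Theorem 19, Corollaries 21 and 24, Remark 16), so you are reconstructing that reference rather than an in-paper argument.

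Nevertheless, the proposal has genuine gaps, and they sit exactly where the content of the theorem lies. First, the constant $q_L$ in \eqref{ecu:qL} is never evaluated; you explicitly defer this, but the specific values $0$, $\pi T_L/(D_L\sqrt{4D_L-T_L^2})$, $2\pi T_L/(D_L\sqrt{4D_L-T_L^2})$ in the three regimes of $\sgn(a_L)$ \emph{are} the integral characterization, and computing them (a limit of the integral as $y_0\searrow\lambda_L$ or $y_0\to+\infty$, which requires the asymptotic ratio $y_L(y_0)/y_0$ or the local behaviour at the tangency) reintroduces precisely the regime analysis you promise to avoid. Second, the well-definedness dichotomy and items (a)--(e) cannot simply be ``read off'' from the cubic field \eqref{dy1L}: that field governs the graph of $y_L$ only where $y_L$ is already known to exist, whereas whether an orbit of the \emph{linear} flow launched from $(0,y_0)$ returns to $\Sigma$ at all (no return when $a_L\leqslant0$ and $4D_L-T_L^2\leqslant0$; return for every spectral type when $a_L>0$, with domain endpoints at the roots of $W_L$, i.e.\ at the invariant manifolds of a saddle) is a statement about the planar flow that the ODE for the graph cannot see; you dispose of it in one sentence. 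Third, uniqueness in item (f) is not addressed; it is easy --- for fixed $y_0$ the map $y_1\mapsto\operatorname{PV}\{\int_{y_1}^{y_0}-y/W_L(y)\,dy\}$ is strictly monotone because the integrand is positive for $y<0$ where $W_L>0$ --- but it must be said, since ``unique'' is part of the statement. Finally, the analyticity claim (h) is only half covered: your argument works at interior points and at a left endpoint $\lambda_L=0$ with $y_L(0)<0$, but when $y_L(0)=0$ (e.g.\ $a_L>0$) the graph terminates at the origin, which is a \emph{singular} point of $X_L$ (a hyperbolic saddle of the cubic field, with eigenvalues $\pm a_L^2$ and eigendirections $(1,\mp1)$), so ``analytic dependence where the field is non-degenerate'' does not apply there; one needs, for instance, analyticity of the unstable manifold of that saddle. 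In short: the differential and integral machinery is right and elegant, but the domain analysis, the evaluation of $q_L$, uniqueness, and the boundary analyticity --- that is, most of statements (a)--(f) and (h) --- remain unproven as written.
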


Now, because of the anticlockwise direction of the flow of \eqref{cf} again, the backward Poincar\'{e} half-map, $y_R$, is determined by the following linear differential system
\begin{equation}
\label{ecu:linealR}
\left\{\begin{array}{l}
\dot x= T_R x-y,\\
\dot y= D_R x-a_R,
\end{array}\right.
\end{equation}
which matches the right linear system of \eqref{cf}. Thus, its definition, its domain $I_R$, and its analyticity are obtained from Theorem \ref{teo:defyL} by means of the change of variables $(t,x)\mapsto(-t,-x)$ and taking $(a_L,D_L,T_L)=(-a_R,D_R,-T_R)$ in system \eqref{ecu:linealL}.
 \begin{theorem}
\label{teo:defyR}
Let us consider system \eqref{ecu:linealR} and define the polynomial 
\begin{equation}\label{eq:polyR}
W_R(y)=D_Ry^2-a_RT_Ry+a_R^2.
\end{equation}
The backward Poincar\'{e} half-map $y_R$ is well defined if, and only if, $a_R\geqslant 0$ and $4D_R-T_R^2>0$, or $a_R<0$. In this case, its interval of definition $I_R:=[\lambda_R,\mu_R)\subset[0,+\infty)$ is non-empty and the following statements hold:
\begin{enumerate}[label = {(\alph*)}]
\item\label{muR} The right endpoint $\mu_R$ of its definition interval $I_R$ is the smallest strictly positive root of the polynomial $W_R$, 
if it exists. Otherwise, $\mu_R=+\infty$.

\item\label{lambdaR} The left endpoint $\lambda_R$ of the interval $I_R$ is strictly positive if, and only if, $a_R>0$, $4D_R-T_R^2>0$, and $T_R>0$. In this case, $y_R(\lambda_R)=0$.

\item\label{yR(IR)left} The left endpoint of the interval $y_R(I_R)$ is the largest strictly negative root of the polynomial $W_R$, if it exists. In the opposite case, this left endpoint is $-\infty$.

\item\label{yR(IR)right} The right endpoint of the interval $y_R(I_R),$ that is $y_R(\lambda_R)$, is strictly negative if, and only if,  $a_R>0$, $4D_R-T_R^2>0$, and $T_R<0$. In this case, $\lambda_R=0$.

\item\label{signWR} The polynomial $W_R$ satisfies $W_R(0)=a_R^2>0$ for $a_R\ne 0$  and $W_R(0)=0$ for $a_R=0$. Moreover, $W_R(y)>0$ for $y \in \operatorname{ch}(I_R\cup y_R(I_R))\setminus\{0\}$.
\item The backward Poincar\'{e} half-map $y_R$ is the unique function $y_R: I_R\subset [0,+\infty) \longrightarrow(-\infty,0]$
that satisfies 
\begin{equation}\label{integralB}
\operatorname{PV}\left\{\int_{y_R(y_0)}^{y_0}\dfrac{-y}{W_R(y)}dy\right\}= q_R(a_R,T_R,D_R)
\quad \mbox{for} \quad y_0\in I_R,
\end{equation}
where
\begin{equation}
\label{ecu:qR}
q_R(a_R,T_R,D_R)=\left\{
\begin{array}{ccl}
0 & \mathrm{if} & a_R<0, \\
-\frac{\pi T_R}{D_R\sqrt{4D_R-T_R^2}}
& \mathrm{if} & a_R=0, \\
-\frac{2\pi T_R}{D_R\sqrt{4D_R-T_R^2}}
& \mathrm{if} & a_R>0.
\end{array}
\right.
\end{equation}
\item\label{rm:derivadaR} The graph of the backward Poincar\'{e} half-map, oriented according to increasing $y_0$, is the portion included in the fourth quadrant $\QQ$ of a particular orbit of the cubic vector field
\begin{equation}\label{dy1R}
X_R(y_0,y_1)=-\big(y_1W_R(y_0) ,y_0 W_R(y_1)\big).
\end{equation}
Equivalently, the backward Poincar\'{e} half-map is a solution of the differential equation 
\begin{equation}
\label{eq:odeR}
y_1W_R(y_0)dy_1-y_0W_R(y_1)dy_0=0.
\end{equation}

\item\label{analyticR} The backward Poincar\'e half-map $y_R$ is analytic in $\Int(I_R)$. Moreover, $y_R$ is analytic in $I_R$ if, and only if, $\lambda_R=0$.
\end{enumerate}
\end{theorem}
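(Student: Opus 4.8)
The plan is to deduce Theorem~\ref{teo:defyR} directly from Theorem~\ref{teo:defyL} by means of a linear change of variables together with a time reversal, exactly as announced in the paragraph preceding the statement. Concretely, I would introduce the transformation $(s,u,v)=(-t,-x,y)$; that is, I reverse time and reflect the first coordinate while leaving the second untouched. A direct computation shows that under this transformation system~\eqref{ecu:linealR} becomes
\[
\dot u = -T_R\,u - v,\qquad \dot v = D_R\,u + a_R,
\]
where the dot now denotes differentiation with respect to $s$. This is precisely a system of the form~\eqref{ecu:linealL} with parameters
\[
(\tilde a_L,\tilde T_L,\tilde D_L)=(-a_R,\,-T_R,\,D_R).
\]
Moreover, the switching line $x=0$ is sent to $u=0$, the right region $x>0$ is sent to the left region $u<0$, and since $\dot u=-v<0$ on the positive $v$-axis the anticlockwise orientation of the flow is preserved.

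The key step is to identify the two half-maps. Since the backward flow of~\eqref{ecu:linealR} through $x>0$ corresponds, under $(s,u,v)=(-t,-x,y)$, to the forward flow of the transformed system through $u<0$, and since the $y$-coordinate (hence $v$) is preserved, a starting point $(0,y_0)$ and its image $(0,y_R(y_0))$ are carried to the starting point and image of the \emph{forward} half-map $\tilde y_L$ of the transformed system. I would therefore argue that $y_R$ is well defined if and only if $\tilde y_L$ is, that their domains coincide, $I_R=I_{\tilde L}$, and that $y_R=\tilde y_L$ on this common domain. I would also record the elementary identity
\[
W_{\tilde L}(y)=\tilde D_L\,y^2-\tilde a_L\tilde T_L\,y+\tilde a_L^2
= D_R\,y^2 - a_R T_R\,y + a_R^2 = W_R(y),
\]
so that the polynomial governing the transformed forward half-map is exactly $W_R$, and the associated cubic field $X_{\tilde L}$ of~\eqref{dy1L} coincides with $X_R$ of~\eqref{dy1R}.

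With this dictionary in place, every item follows by applying Theorem~\ref{teo:defyL} to the transformed system and rewriting the conclusions in terms of $(a_R,T_R,D_R)$. The well-definedness condition ``$\tilde a_L\le 0$ and $4\tilde D_L-\tilde T_L^2>0$, or $\tilde a_L>0$'' becomes ``$a_R\ge 0$ and $4D_R-T_R^2>0$, or $a_R<0$''; items~\ref{muL}--\ref{yL(IL)right} on the endpoints of $I_{\tilde L}$ and $\tilde y_L(I_{\tilde L})$ transcribe into~\ref{muR}--\ref{yR(IR)right}; item~\ref{signWL} gives~\ref{signWR} because $\tilde a_L^2=a_R^2$ and the convex hulls are unchanged; the integral characterization~\eqref{integralF} yields~\eqref{integralB} with
\[
q_R(a_R,T_R,D_R)=q_L(-a_R,-T_R,D_R),
\]
which reproduces~\eqref{ecu:qR} case by case; and items~\ref{rm:derivadaL} and~\ref{analyticL} give~\ref{rm:derivadaR} and~\ref{analyticR} since $X_{\tilde L}=X_R$ and $I_R=I_{\tilde L}$.

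I do not expect any genuine analytic difficulty, since all the hard work is already contained in Theorem~\ref{teo:defyL}; the only real hazard is bookkeeping. The substitution $\tilde T_L=-T_R$ reverses every trace-sign condition, so that, for instance, the hypothesis ``$T_L<0$'' in item~\ref{lambdaL} must turn into ``$T_R>0$'' in item~\ref{lambdaR}, and ``$T_L>0$'' in~\ref{yL(IL)right} into ``$T_R<0$'' in~\ref{yR(IR)right}; likewise $\tilde a_L=-a_R$ interchanges the roles of the signs of $a_R$ throughout. Keeping these sign flips straight across the existence condition, the endpoint descriptions, and the three cases defining $q$ is the step I would carry out most carefully.
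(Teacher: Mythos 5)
Your proposal is correct and is exactly the paper's own argument: the paper obtains Theorem~\ref{teo:defyR} from Theorem~\ref{teo:defyL} via the change of variables $(t,x)\mapsto(-t,-x)$ with the parameter substitution $(a_L,D_L,T_L)=(-a_R,D_R,-T_R)$, which is precisely your dictionary $(\tilde a_L,\tilde T_L,\tilde D_L)=(-a_R,-T_R,D_R)$. Your bookkeeping is also accurate: $W_{\tilde L}=W_R$, $X_{\tilde L}=X_R$, the flipped trace conditions in items~\ref{lambdaR} and~\ref{yR(IR)right}, and the identity $q_R(a_R,T_R,D_R)=q_L(-a_R,-T_R,D_R)$ all match the stated conclusions.
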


In the next remark, we provide a characterization to easily distinguish, under the generic condition $a_L T_La_RT_R\neq0$, when the Poincar\'e half-maps transform $0$ to $0$. This will be used in the proof of propositions \ref{prop:estabmonodromico} and 
\ref{prop:hyperlimitcycle}.

\begin{remark}
\label{rem:0en0}
Under the condition $a_LT_L\neq0$, by assuming the existence of $y_L$, it follows from the statements \ref{lambdaL} and \ref{yL(IL)right} of Theorem \ref{teo:defyL} that $0\in I_L$ and  $y_L(0)=0$ if, and only if, $a_L>0$.

Analogously, under the condition $a_RT_R\neq0$, by assuming the existence of $y_R$, it follows from the statements \ref{lambdaR} and \ref{yR(IR)right} of Theorem \ref{teo:defyR} that $0\in I_R$ and  $y_R(0)=0$ if, and only if, $a_R<0$.
\end{remark}

Now, we establish some fundamental properties of the Poincar\'{e} half-maps which will be useful for the proof of the main result.  The proofs of these properties  can be found in \cite{Caretalpre21}. 

From \eqref{eq:odeL} and \eqref{eq:odeR}, it is straightforward to obtain explicit expressions for the first and second derivatives of $y_L$ and $y_R$. 
\begin{proposition}
\label{prop:derivadasprimeraysegundayLyR}
The first and second derivatives of the Poincar\'{e} half-maps $y_L$ and $y_R$ are given by 
\begin{equation*}
\label{eq:derivadaprimerayL}
y_L'(y_0)=\frac{y_0W_L(y_L(y_0))}{y_L(y_0)W_L(y_0)}    \quad \mbox{for} \quad y_0\in \operatorname{int}(I_L),
\end{equation*}
\begin{equation*}
\label{eq:derivadaprimerayR}
y_R'(y_0)=\frac{y_0W_R(y_R(y_0))}{y_R(y_0)W_R(y_0)} \quad \mbox{for} \quad y_0\in \operatorname{int}(I_R),
\end{equation*}
\begin{equation}
\label{eq:derivadasegundayL}
y''_L(y_0)=-\frac{a_L^2 \left(y_0^2-\left(y_L(y_0)\right)^2\right)W_L(y_L(y_0))}{\left(y_L(y_0)\right)^3\left(W_L(y_0)\right)^2} \quad \mbox{for} \quad y_0\in \operatorname{int}(I_L),
\end{equation}
and 
\begin{equation}
\label{eq:derivadasegundayR}
y''_R(y_0)=-\frac{a_R^2 \left(y_0^2-\left(y_R(y_0)\right)^2\right)W_R(y_R(y_0))}{\left(y_R(y_0)\right)^3\left(W_R(y_0)\right)^2} \quad \mbox{for} \quad y_0\in \operatorname{int}(I_R).
\end{equation}
\end{proposition}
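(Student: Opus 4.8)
The plan is to regard each half-map as an implicit solution of its own separated first-order ODE and to read off both derivatives by direct differentiation, the only substantive point being an algebraic cancellation that hinges on the quadratic shape of $W_L$ and $W_R$. I would carry out the computation for $y_L$ on $\operatorname{int}(I_L)$, where Theorem~\ref{teo:defyL}\ref{analyticL} gives the analyticity of $y_L$, where $y_0>0$ and $y_L(y_0)<0$ (the latter by the strict monotonicity of the fourth-quadrant orbit in \ref{rm:derivadaL}, so neither quantity vanishes), and where statement~\ref{signWL} yields $W_L(y_0)>0$ and $W_L(y_L(y_0))>0$. These facts guarantee that every denominator occurring below is nonzero and that all manipulations are legitimate.

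For the first derivative, writing $y_1=y_L(y_0)$, I would simply solve \eqref{eq:odeL} for the slope, obtaining $y_1'(y_0)=y_0 W_L(y_1)/\bigl(y_1 W_L(y_0)\bigr)$, which is the asserted formula (equivalently, this is $\dot y_1/\dot y_0$ along the orbit of the field $X_L$ from \ref{rm:derivadaL}).

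The real work is the second derivative. I would differentiate $y_1'=y_0 W_L(y_1)/\bigl(y_1 W_L(y_0)\bigr)$ once more in $y_0$, using $\tfrac{d}{dy_0}W_L(y_1)=W_L'(y_1)\,y_1'$ and $\tfrac{d}{dy_0}W_L(y_0)=W_L'(y_0)$, and then replace every surviving $y_1'$ by the first-derivative expression. After collecting over the denominator $y_1^3\,W_L(y_0)^2$, the numerator factors as $W_L(y_1)\cdot P$ with
\[
P=y_1^2 W_L(y_0)+y_0^2 y_1 W_L'(y_1)-y_0^2 W_L(y_1)-y_0 y_1^2 W_L'(y_0).
\]
The crux is the identity $P=-a_L^2\bigl(y_0^2-y_1^2\bigr)$, and this is where I expect the only genuine effort. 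Expanding with $W_L(y)=D_L y^2-a_L T_L y+a_L^2$ and $W_L'(y)=2D_L y-a_L T_L$, the four $D_L y_0^2 y_1^2$ contributions cancel (their coefficients sum to $1+2-1-2=0$), the mixed $a_L T_L y_0^2 y_1$ and $a_L T_L y_0 y_1^2$ contributions cancel in pairs, and only the constant-term parts $a_L^2 y_1^2-a_L^2 y_0^2$ remain. Substituting $P$ back gives exactly \eqref{eq:derivadasegundayL}.

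Finally, the backward map $y_R$ satisfies \eqref{eq:odeR}, which is formally identical to \eqref{eq:odeL} after the replacement $L\mapsto R$, with $W_R(y)=D_R y^2-a_R T_R y+a_R^2$ a quadratic of the same shape. Hence the same two computations apply verbatim and produce the stated formula for $y_R'$ together with \eqref{eq:derivadasegundayR} on $\operatorname{int}(I_R)$. The main obstacle is thus purely the bookkeeping behind the cancellation $P=-a_L^2(y_0^2-y_1^2)$; there is no conceptual difficulty, the structure of the argument being dictated entirely by the separated ODEs \eqref{eq:odeL}--\eqref{eq:odeR}.
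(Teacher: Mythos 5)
Your proposal is correct and follows exactly the route the paper indicates: it derives both formulas by solving the separated equations \eqref{eq:odeL}--\eqref{eq:odeR} for the slope and differentiating once more, with the quadratic form of $W_L$, $W_R$ producing the cancellation $P=-a_L^2\bigl(y_0^2-y_1^2\bigr)$ (the paper itself states the result is ``straightforward'' from these ODEs and defers the bookkeeping to \cite{Caretalpre21}). Your verification of the nonvanishing denominators via statements \ref{signWL} and \ref{analyticL} of Theorem \ref{teo:defyL} is also the right justification, so nothing is missing.
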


Now, we show the first coefficients of the Taylor expansions of the Poincar\'{e} half-maps at the origin (see Proposition \ref{prop:sequ}) and at infinity (see Proposition \ref{prop:sinf}), under suitable conditions. These are the essentials for our study. More coefficients and other details can be seen in \cite{Caretalpre21}.

\begin{proposition}\label{prop:sequ}
Assume that $a_L\neq0$ (resp. $a_R\ne 0$) and $0\in I_L$ (resp. $0\in I_R$). If $y_L(0)=0$ (resp. $y_R(0)=0$), then the Taylor expansion of $y_L$ (resp. $y_R$) around the origin writes as
\begin{equation*}\label{taylor-yLR}
y_L(y_0)=-y_0-\dfrac{2T_L}{3a_L}y_0^2+\mathcal{O} \left(y_0^3\right)  \quad \mbox{(resp. $y_R(y_0)=-y_0-\dfrac{2T_R}{3a_R}y_0^2+\mathcal{O}  \left(y_0^3\right)$).}
\end{equation*}
\end{proposition}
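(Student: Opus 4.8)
The plan is to exploit the analyticity of the half-maps at the origin together with the explicit first-order ODE they satisfy, and then to read off the low-order Taylor coefficients by matching powers of $y_0$. I will carry out the argument for $y_L$; the statement for $y_R$ then follows verbatim, since by Theorem \ref{teo:defyR} the map $y_R$ obeys the same ODE with $W_R,a_R,T_R,D_R$ in place of $W_L,a_L,T_L,D_L$ (equivalently, via the symmetry $(t,x)\mapsto(-t,-x)$, $(a_L,D_L,T_L)=(-a_R,D_R,-T_R)$ used to pass from Theorem \ref{teo:defyL} to Theorem \ref{teo:defyR}).

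First I would secure the existence of the expansion. Since $I_L=[\lambda_L,\mu_L)\subset[0,+\infty)$ and $0\in I_L$ by hypothesis, necessarily $\lambda_L=0$; hence, by statement \ref{analyticL} of Theorem \ref{teo:defyL}, $y_L$ is analytic on $I_L$ and in particular at the origin. Thus I may write $y_L(y_0)=\sum_{k\geq 1}c_k\,y_0^{k}$, the constant term vanishing because $y_L(0)=0$, and it remains only to identify $c_1$ and $c_2$. Because $a_L\neq 0$, the polynomial $W_L$ satisfies $W_L(0)=a_L^2\neq 0$, so $W_L(y_L(y_0))/W_L(y_0)\to 1$ as $y_0\to 0$; this nonvanishing is what makes the whole expansion accessible.

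Next I would determine $c_1$. Clearing denominators in the first-derivative formula of Proposition \ref{prop:derivadasprimeraysegundayLyR} (equivalently, in the ODE \eqref{eq:odeL}) gives
\begin{equation*}
y_L(y_0)\,W_L(y_0)\,y_L'(y_0)=y_0\,W_L(y_L(y_0)).
\end{equation*}
Inserting $y_L(y_0)=c_k y_0^{k}+\mathcal{O}(y_0^{k+1})$ with $c_k$ the first nonzero coefficient, the left-hand side has leading order $y_0^{\,2k-1}$ (using $W_L(0)=a_L^2$) while the right-hand side has leading order $y_0^{\,1}$; balancing forces $k=1$, so $c_1\neq 0$, and matching the $y_0^{\,1}$ coefficients then gives $c_1^2=1$. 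Since $y_L$ maps $[0,+\infty)$ into $(-\infty,0]$ with $y_L(0)=0$, one has $c_1=y_L'(0)\leq 0$, whence $c_1=-1$.

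Finally I would compute $c_2$. Substituting $y_L(y_0)=-y_0+c_2 y_0^2+\mathcal{O}(y_0^3)$ into the cleared ODE, expanding $W_L(y)=a_L^2-a_LT_L y+D_L y^2$ on both sides, and equating the coefficients of $y_0^2$ yields a single linear equation for $c_2$ whose solution is $c_2=-\tfrac{2T_L}{3a_L}$; this produces the stated expansion for $y_L$, and the identical computation with the $R$-data gives that for $y_R$. The one delicate point is the degeneracy of the first-derivative formula at the origin, whose right-hand side is of the form $0/0$ there; the order-matching above, made possible precisely by $W_L(0)\neq 0$, is what resolves this indeterminacy and at the same time rules out $y_L'(0)=0$, pinning the leading exponent to $1$.
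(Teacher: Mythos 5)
Your proposal is correct. Note that the paper itself contains no proof of Proposition \ref{prop:sequ}; it defers it to the reference \cite{Caretalpre21}, so there is no internal argument to compare against. Your derivation is sound and uses only machinery already stated in the paper: $0\in I_L$ forces $\lambda_L=0$, so statement \ref{analyticL} of Theorem \ref{teo:defyL} gives analyticity at the origin; the cleared ODE $y_L(y_0)W_L(y_0)y_L'(y_0)=y_0W_L(y_L(y_0))$ from \eqref{eq:odeL} then pins the leading exponent to $1$ via order matching (using $W_L(0)=a_L^2\neq0$), the sign of $y_L$ gives $c_1=-1$, and matching the $y_0^2$ coefficients indeed yields $c_2=-2T_L/(3a_L)$; the $y_R$ case is verbatim the same. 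The only step you leave implicit is that $y_L$ is not identically zero near the origin, so that a first nonzero coefficient exists; this is immediate from the same cleared ODE, whose right-hand side would be $a_L^2y_0\neq0$ while the left-hand side vanished identically.
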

Notice that if $4D_L-T_L^2>0$  and $4D_R-T_R^2>0$, then, from the statements \ref{muL} and \ref{yL(IL)left} of Theorem \ref{teo:defyL} and the statements \ref{muR} and \ref{yR(IR)left} of Theorem \ref{teo:defyR}, the intervals $I_L$ and $I_R$ are unbouded and $y_L(y_0)$ and $y_R(y_0)$ tend to $-\infty$ as $y_0\to +\infty$.

\begin{proposition}\label{prop:sinf} The following statements hold.
\begin{enumerate}
\item If $4D_L-T_L^2>0$, then the forward Poincar\'e half-map $y_L$ satisfies
\[
\lim_{y_0\to +\infty}\frac{y_L(y_0)}{y_0}=-\exp\left(\frac{\pi T_L}{\sqrt{4D_L-T_L^2}}\right).
\]
\item If $4D_R-T_R^2>0$, then the backward Poincar\'e half-map $y_R$ satisfies
\[
 \lim_{y_0\to +\infty}\frac{y_0}{y_R(y_0)}=-\exp\left(\frac{\pi T_R}{\sqrt{4D_R-T_R^2}}\right).
\]
\end{enumerate}
\end{proposition}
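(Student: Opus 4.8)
The plan is to read off both limits directly from the integral characterizations \eqref{integralF} and \eqref{integralB} by computing the relevant antiderivative in closed form. I begin with statement (1). The hypothesis $4D_L-T_L^2>0$ forces $D_L>0$ and, for $a_L\neq0$, makes the discriminant $a_L^2(T_L^2-4D_L)$ of $W_L$ negative, so that $W_L(y)>0$ for every $y\in\R$; when $a_L=0$ one has simply $W_L(y)=D_Ly^2$. In either case $W_L$ has no strictly positive nor strictly negative root, so Theorem \ref{teo:defyL}\ref{muL} and \ref{teo:defyL}\ref{yL(IL)left} give $\mu_L=+\infty$ and that the left endpoint of $y_L(I_L)$ is $-\infty$; hence $I_L$ is unbounded and $y_L(y_0)\to-\infty$ as $y_0\to+\infty$, as already noted before the statement.

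First I would treat the generic case $a_L\neq0$, where the integrand in \eqref{integralF} has no singularity and the principal value is an ordinary integral. Writing $-y/W_L(y)=-\tfrac{1}{2D_L}W_L'(y)/W_L(y)-\tfrac{a_LT_L}{2D_L}\,1/W_L(y)$ and completing the square in $W_L$, an explicit antiderivative is
\[
F(y)=-\frac{1}{2D_L}\ln W_L(y)-\frac{\sgn(a_L)\,T_L}{D_L\sqrt{4D_L-T_L^2}}\arctan\!\left(\frac{y-c_L}{\beta_L}\right),
\]
with $c_L=a_LT_L/(2D_L)$ and $\beta_L=|a_L|\sqrt{4D_L-T_L^2}/(2D_L)>0$. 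The characterization \eqref{integralF} then reads $F(y_0)-F(y_L(y_0))=q_L$, which I would rearrange to isolate $\ln\big(W_L(y_0)/W_L(y_L(y_0))\big)$ in terms of $q_L$ and a difference of two arctangents.

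Next I take $y_0\to+\infty$. Since $y_0\to+\infty$ and $y_L(y_0)\to-\infty$, the two arctangents tend to $\pi/2$ and $-\pi/2$, so their difference tends to $\pi$; consequently the right-hand side converges and $W_L(y_0)/W_L(y_L(y_0))$ tends to a finite positive limit $M$ with $\ln M=-2D_Lq_L-\tfrac{2\sgn(a_L)T_L}{\sqrt{4D_L-T_L^2}}\pi$. Factoring $W_L(y_0)/W_L(y_L(y_0))=\big(y_0/y_L(y_0)\big)^2(1+o(1))$, where the correction factor tends to $1$ because both $1/y_0$ and $1/y_L(y_0)$ vanish, this simultaneously proves that $\big(y_L(y_0)/y_0\big)^2$ converges and identifies its limit as $1/M$; since $y_L(y_0)/y_0<0$, the limit of $y_L(y_0)/y_0$ is $-M^{-1/2}$. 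Substituting the two admissible values of $q_L$ from \eqref{ecu:qL} (the cases $a_L>0$ and $a_L<0$ yield the \emph{same} $M$) gives exactly $-\exp\big(\pi T_L/\sqrt{4D_L-T_L^2}\big)$. The degenerate case $a_L=0$ is even simpler: then $-y/W_L(y)=-1/(D_Ly)$ and the principal value in \eqref{integralF} equals $-\tfrac1{D_L}\ln\big(y_0/|y_L(y_0)|\big)$, so \eqref{integralF} gives $y_L(y_0)/y_0=-\exp\big(\pi T_L/\sqrt{4D_L-T_L^2}\big)$ identically, not merely in the limit.

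Finally, statement (2) follows from (1) by the symmetry already used to derive Theorem \ref{teo:defyR}: under $(t,x)\mapsto(-t,-x)$ and $(a_L,D_L,T_L)=(-a_R,D_R,-T_R)$, system \eqref{ecu:linealR} becomes \eqref{ecu:linealL} while the $y$-coordinate and the switching line are preserved, and the backward half-map $y_R$ is carried to the forward half-map of the transformed system. Applying (1) to the latter (its hypothesis $4D_R-T_R^2>0$ is the one assumed) gives $\lim_{y_0\to+\infty}y_R(y_0)/y_0=-\exp\big(-\pi T_R/\sqrt{4D_R-T_R^2}\big)$, and taking reciprocals yields the stated formula for $\lim y_0/y_R(y_0)$; alternatively, the same closed-form computation can be carried out verbatim on \eqref{integralB} with $q_R$ from \eqref{ecu:qR}. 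I expect the only real subtlety to be the justification that the limit of the ratio exists at all, rather than merely computing its value under an a priori assumption of convergence: the factorization of $W_L(y_0)/W_L(y_L(y_0))$ above is what delivers existence for free, and one must also handle the case distinction on $\sgn(a_L)$ so that the arctangent contribution and the $q_L$ term combine to the single clean exponent in both branches.
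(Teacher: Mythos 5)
There is no paper-internal proof to compare against here: the paper explicitly defers the proofs of Proposition \ref{prop:sinf} (and of the neighbouring propositions \ref{prop:derivadasprimeraysegundayLyR} and \ref{prop:sequ}) to the companion reference \cite{Caretalpre21}. Judged on its own, your argument is correct and self-contained, and it stays entirely inside this paper's toolkit. The key computations check out: your $F$ does satisfy $F'(y)=-y/W_L(y)$; for $a_L\neq 0$ the principal value in \eqref{integralF} is an ordinary integral (the integrand is continuous at $0$ because $W_L(0)=a_L^2>0$), so $F(y_0)-F(y_L(y_0))=q_L$ holds identically on $I_L$; letting $y_0\to+\infty$, with $y_L(y_0)\to-\infty$ guaranteed by Theorem \ref{teo:defyL}\ref{muL} and \ref{teo:defyL}\ref{yL(IL)left}, the arctangent difference tends to $\pi$, and the two branches of \eqref{ecu:qL} combine with the $\sgn(a_L)$ factor in $F$ to give the \emph{same} constant $\ln M=-2\pi T_L/\sqrt{4D_L-T_L^2}$, hence the limit $-M^{-1/2}=-\exp\left(\pi T_L/\sqrt{4D_L-T_L^2}\right)$ in both cases. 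You also handle correctly the two points most likely to be fumbled: the \emph{existence} of the limit (via the factorization $W_L(y_0)/W_L(y_L(y_0))=\left(y_0/y_L(y_0)\right)^2(1+o(1))$ and the sign $y_L(y_0)/y_0<0$), and the degenerate case $a_L=0$, where the principal value genuinely matters and the relation becomes exact rather than asymptotic. Finally, deducing statement (2) by the symmetry $(t,x)\mapsto(-t,-x)$, $(a_L,D_L,T_L)=(-a_R,D_R,-T_R)$ is legitimate and is precisely the mechanism the paper itself uses to obtain Theorem \ref{teo:defyR} from Theorem \ref{teo:defyL}; inverting the limit is valid because it is finite and nonzero. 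In short: a correct proof, obtained by explicit integration of the characterization \eqref{integralF}--\eqref{integralB}, where the paper itself offers only a citation.
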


We conclude this section with two results that establish the relative position between the graph of the  Poincar\'{e} half-maps and the bisector of the fourth quadrant, that is, the half straight line $y_1=-y_0$, $y_0\geq0$. In Figure \ref{fig:casesTL}, we show the possible relative positions between the graph of the Poincar\'{e} map $y_L$ and the bisector of the fourth quadrant by varying the trace $T_L$ for three illustrative cases. A similar figure could be given for the map $y_R$ by varying the trace $T_R$.

\begin{proposition}
\label{rm:signoy0+y1}
The following statements hold.
\begin{enumerate}[label = {(\alph*)}]
\item The forward Poincar\'{e} half-map $y_L$ satisfies the relationship
\[
\sgn\left(y_0+y_L(y_0) \right)=-\sgn(T_L) \quad \mbox{for} \quad y_0\in  I_L\setminus\{0\}.
\]
In addition,
when $0\in I_L$ and $y_L(0)\neq0$ or when $T_L=0$, then the relationship above also holds for $y_0=0$.
\item The backward Poincar\'{e} half-map $y_R$ satisfies the relationship
\[
\sgn\left(y_0+y_R(y_0) \right)=\sgn(T_R) \quad \mbox{for} \quad y_0\in  I_R\setminus\{0\}.
\]
In addition, 
when $0\in I_R$ and $y_R(0)\neq0$ or when $T_R=0$, then the relationship above also holds for $y_0=0$.
\end{enumerate}
\end{proposition}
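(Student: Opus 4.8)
The plan is to prove the statement for the forward half-map $y_L$; the statement for $y_R$ then follows immediately from the change of variables $(t,x)\mapsto(-t,-x)$ with $(a_L,D_L,T_L)=(-a_R,D_R,-T_R)$ that turns \eqref{ecu:linealR} into \eqref{ecu:linealL}, under which the conclusion $-\sgn(T_L)$ becomes $-\sgn(-T_R)=\sgn(T_R)$. So fix $y_0\in I_L$ with $y_0>0$, write $y_1:=y_L(y_0)\leq 0$, and recall from \eqref{integralF} that $\operatorname{PV}\int_{y_1}^{y_0}\tfrac{-y}{W_L(y)}\,dy=q_L$. The idea is to compare this with the \emph{symmetric} integral $R(m):=\operatorname{PV}\int_{-m}^{m}\tfrac{-y}{W_L(y)}\,dy$, where $m:=\min(y_0,-y_1)\geq 0$ is chosen so that $[-m,m]\subseteq[y_1,y_0]$; by statement \ref{signWL} this guarantees $W_L>0$ on $[-m,m]\setminus\{0\}$ (and for $a_L<0$ it is automatic, since then $4D_L-T_L^2>0$ forces $W_L>0$ on all of $\R$).

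First I would establish a sign identity. Subtracting the two principal values, the contributions near $0$ cancel and leave an ordinary integral: $q_L-R(m)=\int_{y_1}^{-y_0}\tfrac{-y}{W_L(y)}\,dy$ when $y_0\leq -y_1$, and $q_L-R(m)=\int_{-y_1}^{y_0}\tfrac{-y}{W_L(y)}\,dy$ when $y_0\geq -y_1$. In the first case the integrand is positive on $(-\infty,0)$ and the order of the limits $y_1$ and $-y_0$ is governed by $\sgn(y_0+y_1)$; in the second it is negative on $(0,\infty)$ and the order of $-y_1$ and $y_0$ is governed oppositely. Either way one reads off $\sgn\big(R(m)-q_L\big)=\sgn(y_0+y_1)$, so it suffices to prove $\sgn(R(m)-q_L)=-\sgn(T_L)$.

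Next I would control $R(m)$. Using $W_L(y)-W_L(-y)=-2a_LT_Ly$, the integral folds to $R(m)=-2a_LT_L\int_0^m \tfrac{y^2}{W_L(y)W_L(-y)}\,dy$, so $R(0)=0$ and $R'(m)=\tfrac{-2a_LT_Lm^2}{W_L(m)W_L(-m)}$ has constant sign $-\sgn(a_LT_L)$. The three cases for $q_L$ then close the argument in the easy situations. If $a_L>0$, then $q_L=0$ while $R(m)$ has sign $-\sgn(a_LT_L)=-\sgn(T_L)$, so $R(m)-q_L$ has sign $-\sgn(T_L)$. If $a_L=0$, then $W_L(y)=D_Ly^2$ makes the integrand odd, so $R(m)\equiv 0$, while $q_L=\tfrac{\pi T_L}{D_L\sqrt{4D_L-T_L^2}}$ has sign $\sgn(T_L)$ (as $D_L>0$), giving once more $\sgn(R(m)-q_L)=-\sgn(T_L)$.

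The \emph{main obstacle} is the remaining case $a_L<0$, where $R(m)$ and $q_L=\tfrac{2\pi T_L}{D_L\sqrt{4D_L-T_L^2}}$ share the sign $\sgn(T_L)$, so a sign count is insufficient and a magnitude comparison is required. Here $W_L>0$ on all of $\R$ and the focus condition makes $I_L$ unbounded, so $m$ ranges over $[0,+\infty)$; I would compute the improper limit $\lim_{m\to\infty}R(m)=\int_{-\infty}^{\infty}\tfrac{-y}{W_L(y)}\,dy=\tfrac{\pi T_L}{D_L\sqrt{4D_L-T_L^2}}=\tfrac12 q_L$ (the logarithmic primitive cancels at $\pm\infty$, and the residual arctangent term integrates to $\tfrac{2\pi}{|a_L|\sqrt{4D_L-T_L^2}}$). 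Since $R$ is monotone with $R(0)=0$ and limit $\tfrac12 q_L$, it stays strictly between $0$ and $\tfrac12 q_L$; as $|\tfrac12 q_L|<|q_L|$ with the same sign, $R(m)-q_L$ has sign opposite to $q_L$, namely $-\sgn(T_L)$. Combining the three cases gives $\sgn(y_0+y_L(y_0))=-\sgn(T_L)$ for $y_0\in I_L\setminus\{0\}$. Finally, for $y_0=0$ I would argue directly: if $T_L=0$ then $q_L=0$ and oddness forces $y_L(y_0)\equiv-y_0$, so both sides vanish; and if $0\in I_L$ with $y_L(0)\neq 0$, then statement \ref{yL(IL)right} together with Remark \ref{rem:0en0} yields $T_L>0$ and $y_L(0)<0$, whence $\sgn(0+y_L(0))=-1=-\sgn(T_L)$.
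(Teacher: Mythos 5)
The paper itself contains no proof of this proposition: it is listed among the ``fundamental properties'' of the Poincar\'e half-maps whose proofs are deferred to the companion reference \cite{Caretalpre21}, so there is no internal argument to compare yours against; I can only judge the proposal on its own merits. It is correct and essentially self-contained given theorems \ref{teo:defyL} and \ref{teo:defyR}. Your route --- comparing the characterization $\operatorname{PV}\int_{y_1}^{y_0}\frac{-y}{W_L(y)}\,dy=q_L$ with the symmetric integral $R(m)$ over $[-m,m]$, $m=\min(y_0,-y_1)$, so that the leftover one-signed integral encodes $\sgn(y_0+y_1)$, and then evaluating $R$ via the folding identity $W_L(y)-W_L(-y)=-2a_LT_Ly$ --- closes all three cases of $q_L$, and the only delicate case, $a_L<0$, is correctly settled by the limit $\lim_{m\to\infty}R(m)=q_L/2$ together with the strict monotonicity of $R$, which keeps $|R(m)|<|q_L|/2$ for all finite $m$. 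This is squarely in the spirit of the paper's integral-characterization philosophy; a natural alternative route (suggested by statement \ref{rm:derivadaL} of Theorem \ref{teo:defyL}) would be a phase-plane argument viewing the graph of $y_L$ as an orbit of the cubic vector field $X_L$ and computing its crossing direction through the bisector, which is governed by the same quantity: $\big\langle X_L(y_0,-y_0),(1,1)\big\rangle=-2a_LT_Ly_0^2$. One micro-gap to patch: in the case $a_L>0$ with $T_L\neq0$, your sign claim for $R(m)$ needs $m>0$, i.e.\ $y_L(y_0)<0$; this is immediate, since $y_L(y_0)=0$ would make the (then ordinary) integral $\int_0^{y_0}\frac{-y}{W_L(y)}\,dy$ strictly negative, contradicting $q_L=0$. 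With that one line added, the argument is complete, including the $y_0=0$ cases and the reduction of item (b) to item (a) via $(t,x)\mapsto(-t,-x)$ with $(a_L,D_L,T_L)=(-a_R,D_R,-T_R)$, which is exactly the symmetry the paper itself uses to obtain Theorem \ref{teo:defyR} from Theorem \ref{teo:defyL}.
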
 

\begin{corollary}\label{bi2quad}
The following statements are true.
\begin{enumerate}[label = {(\alph*)}]
\item If $T_L=0$ (resp. $T_R=0$), then the graph of the Poincar\'{e} half-map $y_L$ (resp. $y_R$)
is included in the bisector of the fourth quadrant. 
\item If $T_L\ne 0$ (resp. $T_R\ne 0$), then the graph of the Poincar\'{e} half-map $y_L$ (resp. $y_R$)
does not intersect  the bisector of the fourth quadrant except perhaps at the origin.
\end{enumerate}
\end{corollary}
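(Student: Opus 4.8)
The plan is to translate the geometric condition of lying on the bisector into the sign information already furnished by Proposition \ref{rm:signoy0+y1}. Recall that the bisector of the fourth quadrant is the half-line $\{(y_0,y_1):\,y_1=-y_0,\ y_0\geq0\}$, so a point $(y_0,y_L(y_0))$ of the graph of $y_L$ lies on it precisely when $y_0+y_L(y_0)=0$, that is, when $\sgn(y_0+y_L(y_0))=0$. The same remark applies verbatim to the graph of $y_R$. Thus the entire corollary reduces to reading off when the quantity $\sgn(y_0+y_L(y_0))$ (resp. $\sgn(y_0+y_R(y_0))$) vanishes.

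For part (a), I would assume $T_L=0$. By Proposition \ref{rm:signoy0+y1}(a) we have $\sgn(y_0+y_L(y_0))=-\sgn(T_L)=0$ for every $y_0\in I_L\setminus\{0\}$; moreover, the extension clause of that proposition (the case $T_L=0$) guarantees that the same identity holds at $y_0=0$ whenever $0\in I_L$. Hence $y_L(y_0)=-y_0$ for all $y_0\in I_L$, and the graph of $y_L$ is contained in the bisector. The argument for $y_R$ is identical, now using $\sgn(y_0+y_R(y_0))=\sgn(T_R)=0$ from part (b) of the proposition.

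For part (b), I would assume $T_L\neq0$. Then Proposition \ref{rm:signoy0+y1}(a) gives $\sgn(y_0+y_L(y_0))=-\sgn(T_L)\neq0$ for every $y_0\in I_L\setminus\{0\}$, so $y_0+y_L(y_0)\neq0$ and none of these points lies on the bisector. The only graph point that can still meet the bisector corresponds to $y_0=0$; but the bisector intersects the vertical axis $\{y_0=0\}$ only at the origin, so such an intersection forces $y_L(0)=0$ and can occur exactly at the origin. The same reasoning, through part (b) of the proposition, settles the statement for $y_R$.

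Since the corollary is essentially a restatement of the sign information already proven, there is no serious obstacle; the only point requiring care is the behavior at $y_0=0$, where one must invoke the extension clause of Proposition \ref{rm:signoy0+y1} for the case $T_L=0$ (resp. $T_R=0$) in part (a), and, in part (b), explicitly isolate the origin as the single admissible intersection with the bisector.
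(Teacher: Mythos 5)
Your proof is correct and follows exactly the route the paper intends: the corollary is a direct sign-reading of Proposition \ref{rm:signoy0+y1}, with the extension clause handling $y_0=0$ in part (a) and the observation that the bisector meets the axis $y_0=0$ only at the origin handling part (b). No gaps; this matches the paper's (implicit) argument.
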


\begin{figure}[H]
\begin{minipage}{0.31\linewidth}
\centering
\includegraphics[width=4.5cm]{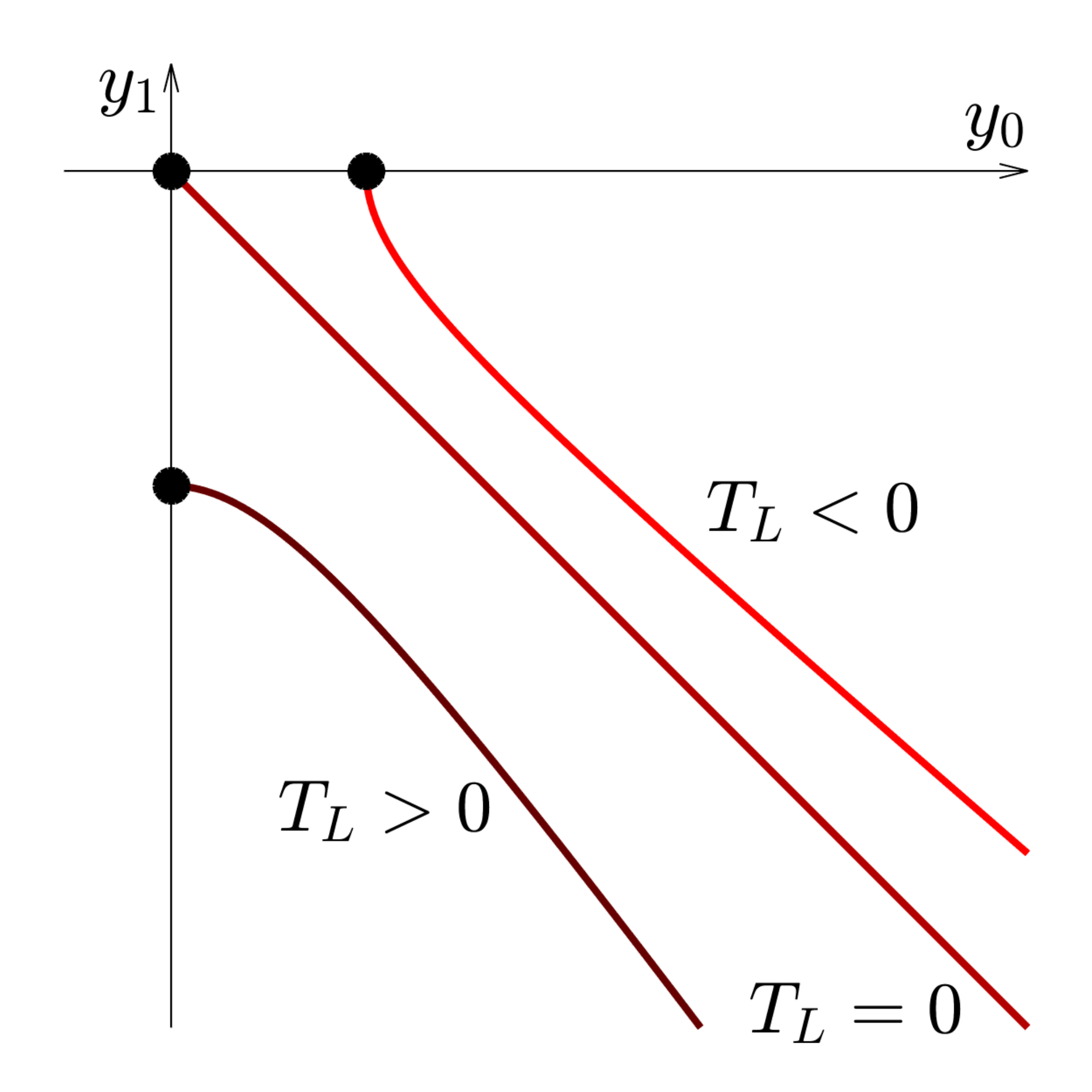}
\end{minipage}
\begin{minipage}{0.31\linewidth}
\centering
\includegraphics[width=4.5cm]{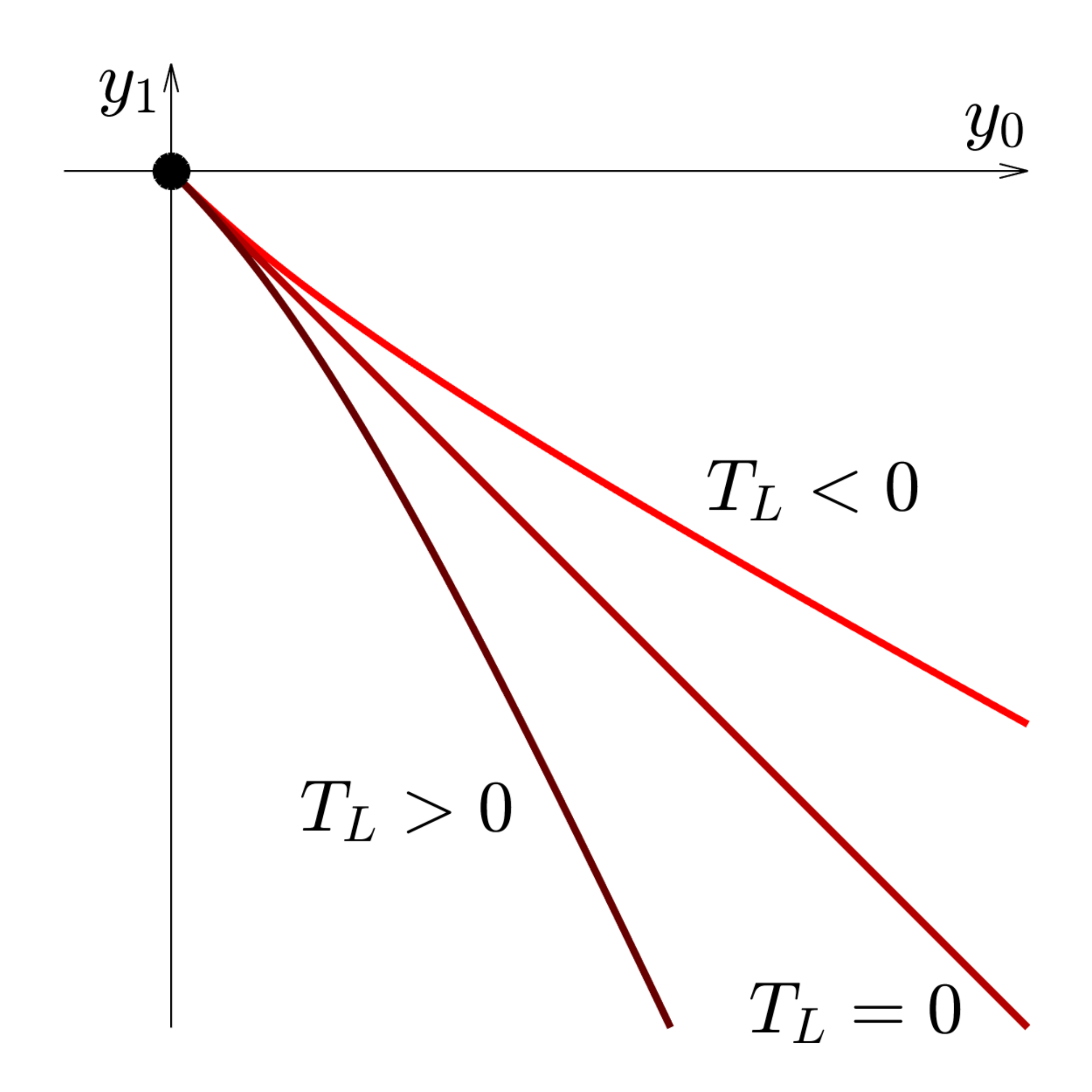}
\end{minipage}
\begin{minipage}{0.31\linewidth}
\centering
\includegraphics[width=4.5cm]{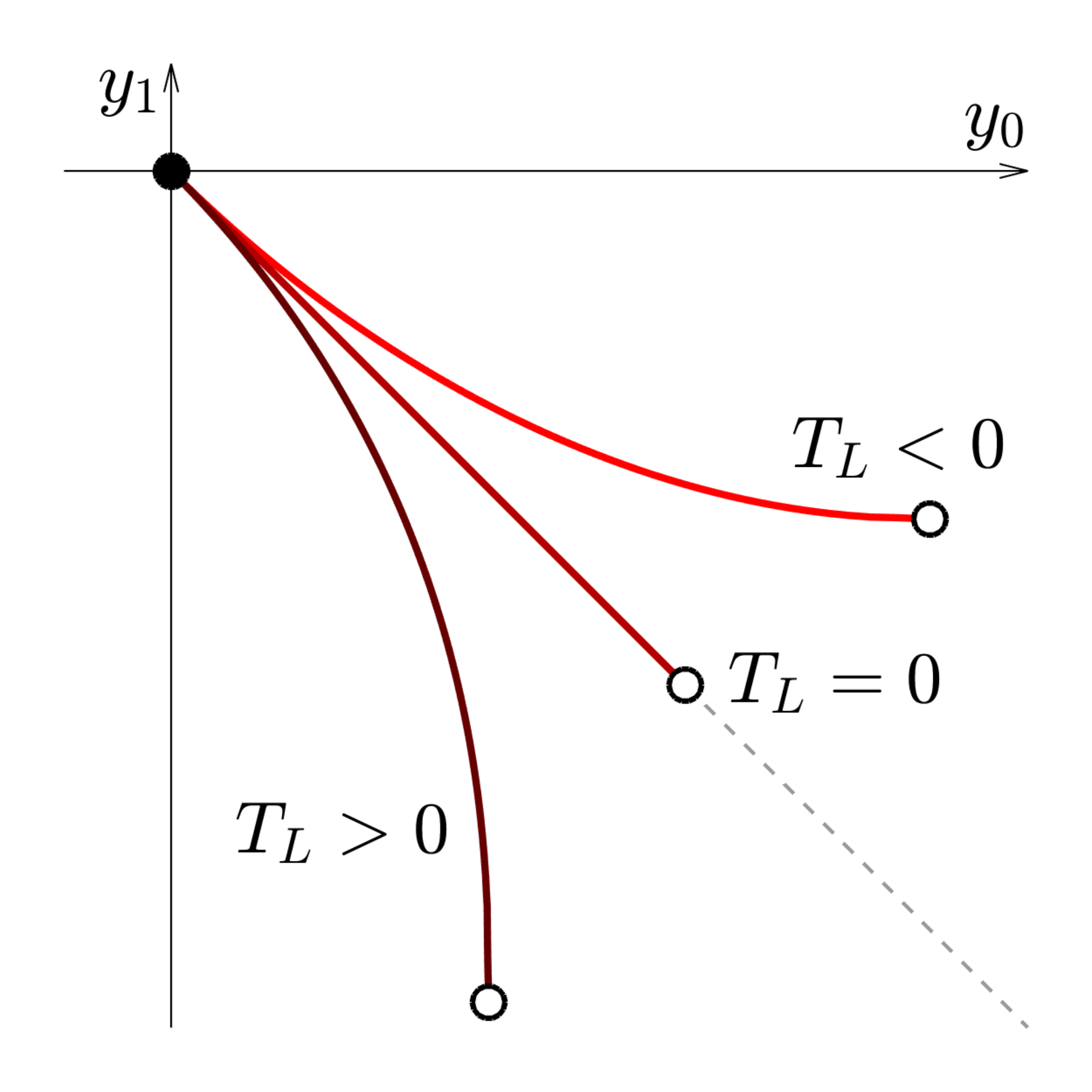}
\end{minipage}
\caption{Three illustrative cases of relative positions between the graph of the Poincar\'{e} half-map $y_L$ and the bisector of the fourth quadrant in terms of the trace $T_L$. The left figure corresponds to a real focus (that is, negative $x$-coordinate) with $D_L=1,$ $a_L=-1/2$, and $T_L\in\{-1/10,\,0,\,1/5\}$; the middle one corresponds to a virtual focus (that is, positive $x$-coordinate) with $D_L=1,$ $a_L=1/2$, and $T_L\in\{-2/5,\,0,\,1/2\}$; and the right one corresponds to a  real saddle with $D_L=-1,$ $a_L=6/5$, and $T_L\in\{-4/5,\,0,\,1\}$. The black circles represent the starting point of the graphs of $y_L$ while, in the right figure, the white circles represent the ending point of the graph of  $y_L$,  whose  coordinates are  the roots of the polynomial 
$W_L(y)=D_Ly^2-a_LT_Ly+a_L^2$ provided in  \eqref{eq:polyL} (which also correspond to the $y$-coordinates of the intersection points between the invariant straight lines of the real saddle with the switching line $\Sigma$). Moreover, in the right figure, the dashed line corresponds to the the part of the bisector of the fourth quadrant which is not hidden by the graph of the Poincar\'{e} half-map.}\label{fig:casesTL}
\end{figure}

\section{Displacement function: fundamental properties}
\label{sec:displfunct}

In this section, as it is usual for the analysis of limit cycles, a displacement function whose zeroes correspond to the periodic orbits of sytem \eqref{cf} will be defined by means of the difference of the Poincar\'e half-maps. In addition, its lower order derivatives will be computed to later provide us with the hyperbolicity and stability of the periodic orbits. 

Notice that any limit cycle of the differential system \eqref{cf} is anticlockwise oriented and transversally crosses the switching line $\Sigma$ twice. Indeed, a limit cycle cannot be entirely contained in the closure of a zone of linearity and cannot pass through the origin which is the unique tangency point of the flow of the differential system  \eqref{cf} in the switching line. 

Moreover, from now on till the end of this paper, we assume $I_L\cap I_R\neq\emptyset$ because this is a necessary condition for the existence of crossing periodic orbits of the differential system \eqref{cf}. Notice that this trivially implies the existence of the Poincar\'e half-maps and includes the conditions $a_L^2+D_L^2\ne 0$ and $a_R^2+D_R^2\ne 0$.

The displacement function $\delta$ is now defined in the interval $I:=I_L\cap I_R=[\lambda_0,\mu_0)\neq\emptyset$ as
\begin{equation}\label{displacement}
\begin{array}{cccl}
\delta : & I & \longrightarrow & \mathbb{R} \\
  & y_0 & \longmapsto & \delta(y_0)= y_R(y_0)-y_L(y_0),
\end{array}
\end{equation}
where $\lambda_0=\max\{\lambda_L,\lambda_R\}$ and $\mu_0=\min\{\mu_L,\mu_R\}$. In Figure \ref{fig:displacement}, we illustrate the displacement function for a scenario with a real focus and a real saddle.

\begin{figure}[H]
    \begin{center}
     \includegraphics[width=6cm]{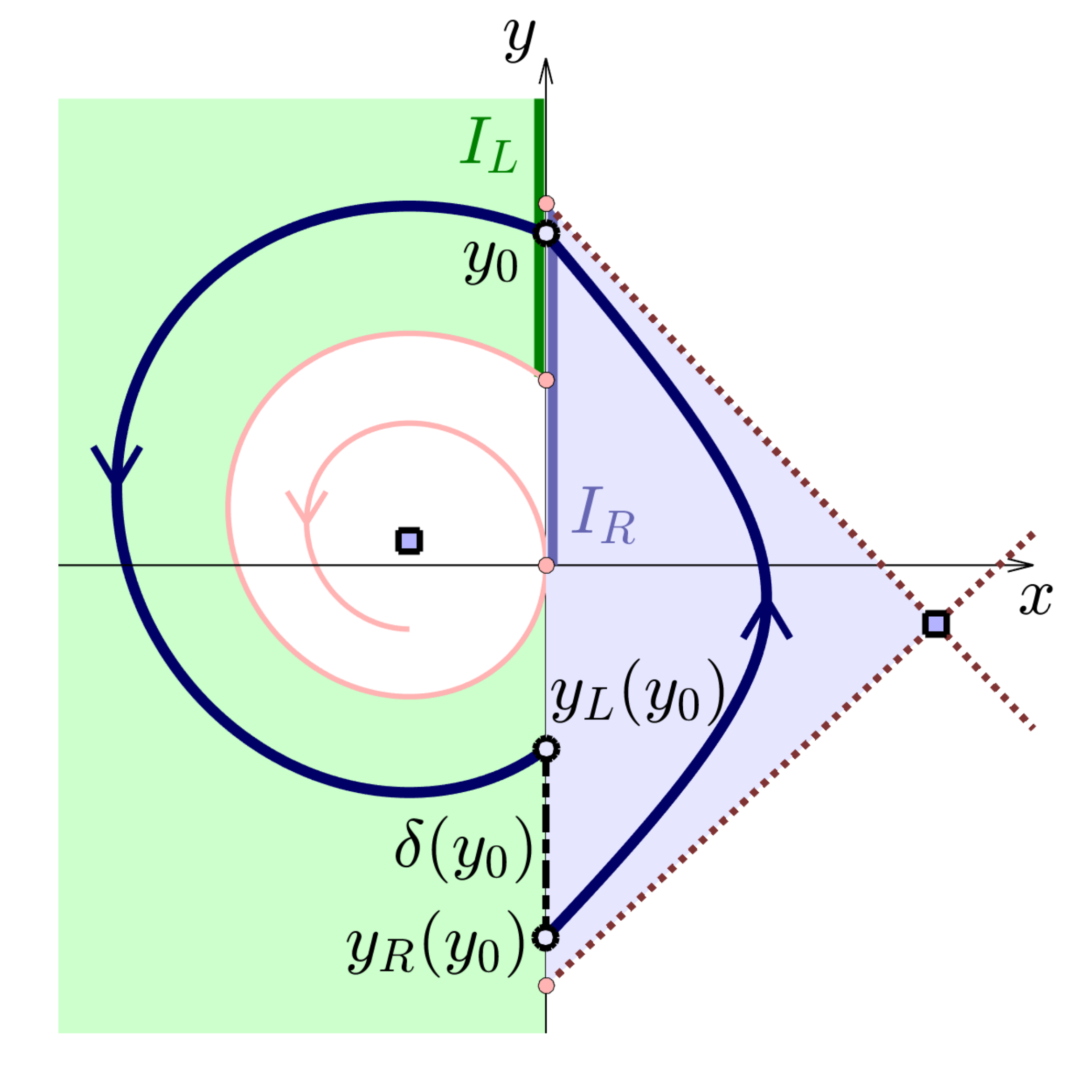}
    \end{center}
     \caption{An example to illustrate the displacement function. It corresponds to a piecewise linear system composed by a (left) system with a real focus and a (right) system with a real saddle.}\label{fig:displacement}
\end{figure}

Notice that a crossing periodic orbit exists and passes through $(0,y_0^*),$ with $y_0^*\in \operatorname{int}(I)$, if, and only if,  $y_0^*$ is a zero of displacement function $\delta$. Such a periodic orbit is a hyperbolic limit cycle provided that $\delta'(y_0^*)\neq0$. In this case, it is attracting (resp. repelling) provided that $\delta'(y_0^*)<0$ (resp. $\delta'(y_0^*)>0$). 
In the next result, a relevant expression for the sign of this derivative will be deduced from Proposition \ref{prop:derivadasprimeraysegundayLyR}. 

\begin{proposition}
\label{prop:derivdisplafunct} Suppose that $y_0^*\in \operatorname{int}(I)$ satisfies $\delta(y_0^*)=0$. Denote $y_1^*=y_R(y_0^*)=y_L(y_0^*)<0$ and define
\begin{equation}
\label{eq:c0c1c2}
\begin{array}{l}
c_0:=a_Ra_L\left(a_RT_L-a_LT_R\right),\\
c_1:=a_RT_RD_L-a_LT_LD_R,\\
c_2:=a_L^2 D_R-a_R^2 D_L.\\
\end{array}
\end{equation}
 Then, the following statements hold:
 \begin{enumerate}[label = {(\alph*)}]
\item The derivative of the displacement function $\delta$ defined in \eqref{displacement} verifies
\begin{equation}
\label{eq:signodeltaprima}
\sgn(\delta'(y_0^*))=\sgn(F(y_0^*,y_1^*)),
\end{equation}
being 
\begin{equation}\label{CeF}
F(y_0,y_1)=c_0+c_1 y_0 y_1+c_2(y_0+y_1).
\end{equation}
\item Moreover, if $\delta'(y_0^*)=0$, then the second derivative of $\delta$ verifies
\begin{equation}
\label{eq:signodeltasegunda}
\sgn(\delta''(y_0^*))=\sgn\left(T_L \left( c_2 y_0^*+c_0\right) \right)=-\sgn\left(T_R \left( c_2 y_1^*+c_0\right) \right).
\end{equation}
\end{enumerate}
\end{proposition}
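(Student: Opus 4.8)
The plan is to reduce the signs of $\delta'(y_0^*)$ and $\delta''(y_0^*)$ to the signs of explicit polynomials in $(y_0^*,y_1^*)$, using three ingredients from the previous section: the closed formulas for $y_L',y_R',y_L'',y_R''$ in Proposition \ref{prop:derivadasprimeraysegundayLyR}; the positivity $W_L(y)>0$, $W_R(y)>0$ off the origin on the relevant intervals (statement \ref{signWL} of Theorem \ref{teo:defyL} and statement \ref{signWR} of Theorem \ref{teo:defyR}); and the bisector relations of Proposition \ref{rm:signoy0+y1}. Throughout I write $y_0=y_0^*$ and $y_1=y_1^*$; since $y_0$ is interior to $I\subset[0,+\infty)$ we have $y_0>0$, while $y_1<0$ by hypothesis, so that $y_0-y_1>0$, $y_0 y_1<0$, and $W_L(y_0),W_L(y_1),W_R(y_0),W_R(y_1)$ are all strictly positive.

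For part (a), I would substitute the first-derivative formulas into $\delta'(y_0^*)=y_R'(y_0^*)-y_L'(y_0^*)$ and place everything over a common denominator, obtaining
\[
\delta'(y_0^*)=\frac{y_0}{y_1}\cdot\frac{W_R(y_1)W_L(y_0)-W_L(y_1)W_R(y_0)}{W_R(y_0)W_L(y_0)}.
\]
The numerator $N(y_0,y_1):=W_R(y_1)W_L(y_0)-W_L(y_1)W_R(y_0)$ is antisymmetric under the exchange $y_0\leftrightarrow y_1$, hence it vanishes on the diagonal and is divisible by $(y_0-y_1)$. Expanding the two products and pairing the coefficients $a_R^2,-a_RT_R,D_R$ of $W_R$ against $a_L^2,-a_LT_L,D_L$ of $W_L$ produces precisely the cross-products $-c_0,-c_2,-c_1$ of \eqref{eq:c0c1c2}, so a short computation gives $N(y_0,y_1)=-(y_0-y_1)F(y_0,y_1)$ with $F$ as in \eqref{CeF}. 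Since $y_0/y_1<0$, $W_R(y_0)W_L(y_0)>0$ and $y_0-y_1>0$, the two minus signs cancel and $\sgn(\delta'(y_0^*))=\sgn(F(y_0^*,y_1^*))$, which is \eqref{eq:signodeltaprima}.

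For part (b) I would impose $\delta'(y_0^*)=0$, which by the display above is equivalent to the symmetric relation $W_R(y_1)W_L(y_0)=W_L(y_1)W_R(y_0)$. Substituting the second-derivative formulas \eqref{eq:derivadasegundayL}--\eqref{eq:derivadasegundayR} into $\delta''(y_0^*)=y_R''(y_0^*)-y_L''(y_0^*)$ and pulling out the common prefactor gives
\[
\delta''(y_0^*)=-\frac{y_0^2-y_1^2}{y_1^3}\left(\frac{a_R^2W_R(y_1)}{W_R(y_0)^2}-\frac{a_L^2W_L(y_1)}{W_L(y_0)^2}\right).
\]
The linchpin is the algebraic identity $a_R^2W_L(y)-a_L^2W_R(y)=-y\,(c_2 y+c_0)$, which I would check by direct expansion: its constant term cancels and its $y^2$- and $y$-coefficients are exactly $-c_2$ and $-c_0$. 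Using the constraint $W_R(y_1)W_L(y_0)=W_L(y_1)W_R(y_0)$ to clear one factor from the bracket and then applying this identity at $y_0$ collapses the bracket to a single term proportional to $(c_2 y_0^*+c_0)$; performing the analogous manipulation at $y_1$ produces the same quantity in the form $(c_2 y_1^*+c_0)$. After restoring the strictly positive factors $W_L,W_R$ and the prefactor, the sign of $\delta''(y_0^*)$ reduces to that of $(y_0+y_1)(c_2 y_0^*+c_0)$, respectively $(y_0+y_1)(c_2 y_1^*+c_0)$, up to an explicit power of $y_1$. Finally I would invoke Proposition \ref{rm:signoy0+y1}: part (a) gives $\sgn(y_0+y_1)=-\sgn(T_L)$ and part (b) gives $\sgn(y_0+y_1)=\sgn(T_R)$, and substituting these into the two reductions yields the two expressions in \eqref{eq:signodeltasegunda}.

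The main obstacle is the sign bookkeeping in part (b). One must track the parity of the powers of $y_1$ carefully, since an odd power $y_1^3$ survives alongside the $(c_2 y_0^*+c_0)$-reduction while an even power survives alongside the $(c_2 y_1^*+c_0)$-reduction; these different parities are exactly what must be reconciled for the two forms in \eqref{eq:signodeltasegunda} to be consistent. Consistency is not automatic: it encodes the fact that at a crossing periodic orbit one necessarily has $\sgn(T_L)=-\sgn(T_R)$ (both being determined by $\sgn(y_0+y_1)$ via Proposition \ref{rm:signoy0+y1}), together with the relation between $c_2 y_0^*+c_0$ and $c_2 y_1^*+c_0$ forced by $F(y_0^*,y_1^*)=0$. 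Obtaining the clean factorization through the identity $a_R^2W_L(y)-a_L^2W_R(y)=-y(c_2 y+c_0)$ is what makes the whole reduction transparent; without it the bracket is an unwieldy degree-four expression in $(y_0,y_1)$ whose sign is far from evident.
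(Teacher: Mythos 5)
Your part (a) is correct and is exactly the paper's argument: the same factorization $W_L(y_1)W_R(y_0)-W_L(y_0)W_R(y_1)=(y_0-y_1)\bigl(c_0+c_1y_0y_1+c_2(y_0+y_1)\bigr)$ and the same positivity bookkeeping, so there is nothing to add there.

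In part (b) your machinery is also the paper's (impose $W_L(y_1^*)W_R(y_0^*)=W_L(y_0^*)W_R(y_1^*)$, use the identity $a_L^2W_R(y)-a_R^2W_L(y)=y(c_2y+c_0)$, then apply Proposition \ref{rm:signoy0+y1}), but your final claim is not what this computation gives. Set $\rho:=W_L(y_1^*)/W_L(y_0^*)=W_R(y_1^*)/W_R(y_0^*)>0$. The two reductions are
\[
\delta''(y_0^*)=\frac{(y_0^*-y_1^*)(y_0^*+y_1^*)}{(y_1^*)^3}\,\rho\,\frac{y_0^*\,(c_2y_0^*+c_0)}{W_L(y_0^*)W_R(y_0^*)}
\qquad\text{and}\qquad
\delta''(y_0^*)=\frac{(y_0^*-y_1^*)(y_0^*+y_1^*)}{(y_1^*)^2}\,\rho^2\,\frac{c_2y_1^*+c_0}{W_L(y_1^*)W_R(y_1^*)}.
\]
The first (odd power of $y_1^*$) gives $\sgn(\delta'')=-\sgn\bigl((y_0^*+y_1^*)(c_2y_0^*+c_0)\bigr)=\sgn\bigl(T_L(c_2y_0^*+c_0)\bigr)$, as in the paper. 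The second (even power, exactly the parity you point out) gives $\sgn(\delta'')=+\sgn\bigl((y_0^*+y_1^*)(c_2y_1^*+c_0)\bigr)=+\sgn\bigl(T_R(c_2y_1^*+c_0)\bigr)$, i.e.\ with a plus sign, not the minus sign of \eqref{eq:signodeltasegunda}. Your ``consistency'' paragraph has it backwards: applying the identity at $y_1^*$ and using the constraint yields $y_1^*(c_2y_1^*+c_0)=\rho\,y_0^*(c_2y_0^*+c_0)$, hence $\sgn(c_2y_1^*+c_0)=-\sgn(c_2y_0^*+c_0)$; together with $\sgn(T_L)=-\sgn(T_R)$ this makes $\sgn(T_L(c_2y_0^*+c_0))$ and $\sgn(T_R(c_2y_1^*+c_0))$ \emph{equal}, so the minus-sign equality you claim to recover could only hold if both sides vanish.

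In fact the minus sign is evidently a typo in the statement itself: in the proof of Proposition \ref{prop:dlc} the paper quotes this very result as $\sgn(\delta'')=\sgn(T_L^*(c_2^*y_0^*+c_0^*))=\sgn(T_R^*(c_2^*y_1^*+c_0^*))$, which agrees with the computation above (and the application there, forcing $c_2^*=c_0^*=0$ when $\delta''=0$, works with either sign). So your derivation is sound mathematics, but the proof as written contains a genuine gap: the assertion that substituting Proposition \ref{rm:signoy0+y1} ``yields the two expressions in \eqref{eq:signodeltasegunda}'' is false for the second expression, and the erroneous consistency argument conceals the discrepancy that an explicit final computation would have exposed.
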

\begin{proof}
Let us fix $y_0^*\in \operatorname{int}(I)$ with $y_1^*=y_R(y_0^*)=y_L(y_0^*)
$. From Proposition \ref{prop:derivadasprimeraysegundayLyR}, it is straightforward to get \begin{equation*}
\label{eq:derivadadelta}
\delta'(y_0^*)=y'_R(y_0^*)-y'_L(y_0^*)=C(y_0^*,y_1^*)F(y_0^*,y_1^*),
\end{equation*}
where  $C$ and $F$ are given by
\[
C(y_0,y_1)=\dfrac{-y_0(y_0-y_1)}{y_1W_R(y_0)W_L(y_0)}
\]
and
\begin{equation}\label{Fe1}
F(y_0,y_1)=\dfrac{W_L(y_1)W_R(y_0)-W_L(y_0)W_R(y_1)}{y_0-y_1}.
\end{equation}

By substituting expressions \eqref{eq:polyL} and \eqref{eq:polyR} of polynomials $W_L$ and $W_R$ in expression \eqref{Fe1}, we see that, for $y_0\ne y_1$,
\[
F(y_0,y_1)=c_0+c_1 y_0 y_1+c_2(y_0+y_1),
\]
where the coefficients $c_0,c_1,$  and $c_2$ are given in \eqref{eq:c0c1c2}. 

Since $y_0^*>0, y_1^*< 0,$ $W_R(y_0^*)>0,$ and  $W_L(y_0^*)>0$ (see the statements \ref{signWL} from theorems \ref{teo:defyL} and \ref{teo:defyR}), then $C(y_0^*,y_1^*)>0$ and item (a) is proven.

Now, from equations \eqref{eq:derivadasegundayL} and \eqref{eq:derivadasegundayR} of Proposition \ref{prop:derivadasprimeraysegundayLyR},
it follows that
\begin{equation}
\label{eq:derivadasegundadelta}
\delta''(y_0^*)=y''_R(y_0^*)-y''_L(y_0^*)=\frac{\left(y_0^*\right)^2-\left(y_1^*\right)^2}{\left(y_1^*\right)^3}
\left[
\frac{a_L^2 W_L(y_1^*)}{W_L(y_0^*)^2}-\frac{a_R^2 W_R(y_1^*)}{W_R(y_0^*)^2}
\right].
\end{equation}
Taking into account equalities \eqref{eq:signodeltaprima} and \eqref{Fe1}, it is clear that
$\delta'(y_0^*)=0$ if, and only if, $W_L(y_1^*)W_R(y_0^*)=W_L(y_0^*)W_R(y_1^*)$. Therefore, the assumption $\delta'(y_0^*)=0$ allows to write the second derivative \eqref{eq:derivadasegundadelta} as
\begin{equation*}
\label{eq:derivadasegundadeltaconderivadacero}
\begin{array}{rcl}
\delta''(y_0^*)&=&\displaystyle \frac{\left(y_0^*\right)^2-\left(y_1^*\right)^2}{\left(y_1^*\right)^3}
\;\frac{W_R(y_1^*)W_L(y_0^*)}{\left(W_L(y_0^*)W_R(y_0^*)\right)^2} 
\left[
a_L^2 W_R(y_0^*)-a_R^2 W_L(y_0^*)
\right]\\
\noalign{\medskip}
&=&
\displaystyle
\left(y_0^*+y_1^*\right) 
\frac{y_0^*-y_1^*}{\left(y_1^*\right)^3}\;
\frac{W_R(y_1^*)}{W_L(y_0^*) \left(W_R(y_0^*)\right)^2} 
\;
y_0^*
\left[
c_2 y_0^*+c_0
\right].
\end{array}
\end{equation*}
Since $y_0^*>0, y_1^*< 0,$ $W_R(y_1^*)>0,$ $W_L(y_0^*)>0$, from Proposition \ref{rm:signoy0+y1} we obtain
$$
\sgn{\left(\delta''(y_0^*)\right)}=-\sgn{\left(\left(y_0^*+y_1^*\right) \left(
c_2 y_0^*+c_0\right)\right)}=\sgn\left(T_L \left( c_2 y_0^*+c_0\right) \right).
$$
Analogous computations allow to prove the second equality of equation \eqref{eq:signodeltasegunda}. Hence, the proof of item (b) is finished.
\end{proof}

\begin{remark}\label{rem:multiplier}
Notice that the coefficient $c_0$ given in \eqref{eq:c0c1c2} is the product of three factors. The factors $a_L$ and $a_R$ are the nonhomogeneous terms of the differential system \eqref{cf} and
\begin{equation} \label{eq:xi}
\xi:=a_RT_L-a_LT_R
\end{equation}
has already appeared in Theorem \ref{main} and, as will be proven, determines the stability of the unique limit cycle, if any.
\end{remark}

\begin{remark}\label{rem:hype}
Let us give some comments about the zero set $\gamma=F^{-1}(\{0\})$, for the function $F$ given in \eqref{CeF}. When $c_1 (c_2^2-c_1 c_0)=0$, the set $\gamma$ is either the empty set, the whole plane, a straight line, or a pair of secant straight lines. We shall see that these non-generic cases will not be relevant in our analysis. Therefore, in this Remark, we assume that $c_1 (c_2^2-c_1 c_0)\neq0$ and, consequently, the set $\gamma$
is a non-degenerate hyperbola.

The asymptotes of $\gamma$ are given by $y_0=-c_2/c_1$ and $y_1=-c_2/c_1$.
 On the one hand, except for the case $c_2=0$ (when the asymptotes coincide with the coordinate axes), the two intersection points between the hyperbola and the coordinate axes are $(-c_0/c_2,0)$ and $(0,-c_0/c_2)$. On the other hand, the center of $\gamma$, (that is, the intersection point between its asymptotes) is the point $(-c_2/c_1, -c_2/c_1)$. Note that a center like that must lay either on the origin or in the first or third quadrant. Thus, at most one branch of the hyperbola may intersect the fourth quadrant $\QQ$, which would then be divided into two connected components.

Regarding the intersection between $\gamma$ and the bisector of the fourth quadrant, notice that it only occurs when $c_0c_1\geqslant0$. In this case, the point of intersection is
\begin{equation}
\label{eq:puntito}
(\widebar{y}_0,\widebar{y}_1)=\left(\sqrt{\dfrac{c_0}{c_1}},-\sqrt{\dfrac{c_0}{c_1}}\right).
\end{equation}
Furthermore, it can be seen that if $c_0\neq0$ then the intersection is transversal, and if $c_0=0$ then the point of intersection is the origin and the intersection is non-transversal.

Finally, it is easy to see that the function $y_1=\phi(y_0),$ which describes the hyperbola $\gamma$ as a graph, is increasing when $c_0c_1-c_2^2>0$ and decreasing when $c_0c_1-c_2^2<0$.
\end{remark}

\section{Stability of monodromic singularities and the infinity}\label{sec:stability}

This section will be devoted to presenting some results about stability for monodromic singularities. 

In broad terms, the concept of monodromy is related to the rotation of the flow of the differential system. In order to precisely establish this concept for a point of the phase plane of the differential system \eqref{cf}, it is convenient to distinguish if it is located in the switching line $\Sigma$ or not:

\begin{itemize}[leftmargin=*]
\item A point of the differential system \eqref{cf} outside the switching line $\Sigma$ is said to be a {\it monodromic singularity} when it is a focus or a center.

\item Due to the crossing behavior of the flow of the differential system \eqref{cf} on $\Sigma$, the origin is the unique point of $\Sigma$ around which rotation is possible and, moreover, this rotation is a consequence of the half-rotations for the half-planes $\left\{(x,y)\in\mathbb{R}^2:x\leqslant 0\right\}$ and $\left\{(x,y)\in\mathbb{R}^2:x\geqslant 0\right\}$. A glimpse of the vector field of the differential system \eqref{cf}  allows to assert that the half-rotation around the origin in the half-plane $\left\{(x,y)\in\mathbb{R}^2:x\leqslant 0\right\}$ only occurs when $a_L=0$ and $4D_L-T_L^2>0$ or when $a_L>0$. Analogously,  the half-rotation around the origin in the half-plane $\left\{(x,y)\in\mathbb{R}^2:x\geqslant 0\right\}$ only happens when $a_R=0$ and $4D_R-T_R^2>0$ or when $a_R<0$.  Therefore, the origin is said to be a {\it monodromic singularity} if, and only if, one of the four feasible combinations above holds. 
\end{itemize}

The next result provides the stability of the origin when it is a monodromic singularity of \eqref{cf} in the case $a_L>0$ and $a_R< 0$.

\begin{proposition}
\label{prop:ep}
Let  $\xi$ be defined as in expression \eqref{eq:xi}. 
Consider the differential system \eqref{cf} with $a_L>0$ and $a_R< 0$. Then, 
the Poincar\'{e} half-maps satisfy $y_R(0)=y_L(0)=0$ and 
the origin is an attracting (resp. repelling) monodromic singularity provided that $\xi > 0$ (resp. $\xi < 0$).
\end{proposition}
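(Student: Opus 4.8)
The plan is to first locate the half-maps at the origin, then convert the stability of the monodromic singularity into a statement about the sign of the displacement function $\delta$ near $y_0=0$, and finally read that sign off from the second-order Taylor coefficients supplied by Proposition~\ref{prop:sequ}.

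First I would show that $y_L(0)=y_R(0)=0$. Since $a_L>0$, statement~\ref{lambdaL} of Theorem~\ref{teo:defyL} rules out $\lambda_L>0$, so $0\in I_L$, and statement~\ref{yL(IL)right} shows that $y_L(0)$ cannot be strictly negative; as $y_L$ takes values in $(-\infty,0]$, this forces $y_L(0)=0$. The symmetric argument applied to Theorem~\ref{teo:defyR} with $a_R<0$ yields $y_R(0)=0$ (this is precisely Remark~\ref{rem:0en0}). Because $\lambda_L=\lambda_R=0$, statements~\ref{analyticL} and~\ref{analyticR} ensure that $y_L$ and $y_R$ are analytic at the origin, so the expansions of Proposition~\ref{prop:sequ} are valid there; moreover, $a_L>0$ together with $a_R<0$ places the origin in the monodromic case.

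Next I would encode the stability through the return map $\Pi=y_R^{-1}\circ y_L$: starting at $(0,y_0)$ with $y_0>0$, the forward flow lands at $(0,y_L(y_0))$ in the left zone and then returns to $(0,\Pi(y_0))$ through the right zone, where $\Pi(y_0)$ is defined by $y_R(\Pi(y_0))=y_L(y_0)$. From Proposition~\ref{prop:derivadasprimeraysegundayLyR} one checks that $y_R'(y_0)<0$ on $\operatorname{int}(I_R)$, so $y_R$ is strictly decreasing and invertible near $0$; together with the Taylor expansions giving $y_L'(0)=y_R'(0)=-1$, this yields $\Pi'(0)=1$, so the linear test is inconclusive and the stability is decided by the sign of $\Pi(y_0)-y_0$ for small $y_0>0$. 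Applying the decreasing map $y_R$ reverses the inequality $\Pi(y_0)>y_0$, turning it into $y_L(y_0)<y_R(y_0)$, so that $\sgn(\Pi(y_0)-y_0)=\sgn(y_R(y_0)-y_L(y_0))=\sgn(\delta(y_0))$; hence the origin is attracting when $\delta(y_0)<0$ and repelling when $\delta(y_0)>0$ for small $y_0>0$.

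Finally, subtracting the expansions of Proposition~\ref{prop:sequ} gives
\[
\delta(y_0)=y_R(y_0)-y_L(y_0)=\frac{2}{3}\left(\frac{T_L}{a_L}-\frac{T_R}{a_R}\right)y_0^2+\mathcal{O}(y_0^3)=\frac{2\,\xi}{3\,a_L a_R}\,y_0^2+\mathcal{O}(y_0^3),
\]
with $\xi=a_RT_L-a_LT_R$ as in~\eqref{eq:xi}. Since $a_L>0$ and $a_R<0$ give $a_La_R<0$, we obtain $\sgn(\delta(y_0))=-\sgn(\xi)$ for small $y_0>0$. Combining this with the previous step, $\xi>0$ forces $\delta(y_0)<0$ and hence an attracting origin, while $\xi<0$ forces $\delta(y_0)>0$ and a repelling origin, as claimed. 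The main obstacle is the middle step: making the passage from ``sign of $\delta$'' to ``attracting/repelling'' rigorous relies on the monotonicity and analyticity of the half-maps at $0$ and on the observation that $\Pi'(0)=1$, so that it is genuinely the second-order term of $\delta$ that governs the dynamics.
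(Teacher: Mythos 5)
Your proof is correct and follows essentially the same route as the paper: establish $y_L(0)=y_R(0)=0$, use the expansions of Proposition~\ref{prop:sequ} to obtain $\delta(y_0)=\tfrac{2\xi}{3a_La_R}y_0^2+\mathcal{O}(y_0^3)$, and read the stability off the sign of $\delta$ for small $y_0>0$. Your only deviations are harmless refinements: you justify the passage from $\sgn(\delta)$ to attracting/repelling explicitly via the return map $\Pi=y_R^{-1}\circ y_L$ (a step the paper leaves implicit), and you deduce $y_L(0)=y_R(0)=0$ from statements \ref{lambdaL} and \ref{yL(IL)right} of theorems \ref{teo:defyL} and \ref{teo:defyR} (as in Remark~\ref{rem:0en0}) rather than directly from the integral characterization \eqref{integralF}.
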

\begin{proof}
When $a_L>0$, from expression \eqref{integralF}, it follows that $y_L(0)=0$. In the same manner, if $a_R<0$, then $y_R(0)=0$. Since $a_L>0$ and $a_R<0$, the origin is a monodromic singularity. 
Its stability is obtained by computing the power series of the displacement function \eqref{displacement}  around $y_0 = 0$. 
Accordingly, from Proposition \ref{prop:sequ}, we get
\begin{equation}
\label{eq:deltarrollo}
\delta(y_0)=\left(\dfrac{2T_{L}}{3a_{L}}-\dfrac{2T_{R}}{3a_{R}}\right)y_0^2+\CO(y_0^3)
=\dfrac{2 \xi}{3 a_L a_R}y_0^2+\CO(y_0^3).
\end{equation}

Therefore, the monodromic singularity is attracting (resp. repelling) provided that $\xi>0$ (resp. $\xi<0$).
\end{proof}

 \begin{remark}
Taking expression \eqref{eq:deltarrollo} into account we see that, under the hypotheses of Proposition \ref{prop:ep}, the first Lyapunov coefficient of the monodromic singularity of \eqref{cf} at the origin (see, for instance, \cite{gassulcoll,NS21}) is given by the expression $2 \xi/(3 a_L a_R).$ Thus, under suitable assumptions, the parameter $\xi$ can be used, among other things, to generate a limit cycle through a Hopf-like bifurcation.
\end{remark}

The next result states that, in the case of a unique monodromic singularity and $T_LT_R<0$, the parameter $\xi$  also provides its stability, even if it is outside $\Sigma$.

\begin{proposition}
\label{prop:estabmonodromico} Assume that both (forward and backward) Poincar\'e half-maps exist.
Let $c_0$ and $\xi$ be as given in \eqref{eq:c0c1c2}  and \eqref{eq:xi}, respectively.
Consider the differential system \eqref{cf} with $T_LT_R<0$ and $c_0\neq 0$.
If the differential system \eqref{cf} admits a unique monodromic singularity, then it is attracting (resp. repelling) provided that $\xi>0$ (resp. $\xi < 0$). Moreover, if the displacement function $\delta$ can be defined, then $\sgn(\delta(y_0))=-\sgn(\xi)$ for $y_0$ close enough to the left endpoint $\lambda_0$ of the interval of definition $I$ of the displacement function $\delta$.
\end{proposition}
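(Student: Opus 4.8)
The plan is to use the hypothesis $c_0\neq0$ to fix the signs of the data, then to reduce to three mutually exclusive configurations, and finally to treat the stability assertion and the sign of $\delta$ separately in each of them. Since $c_0=a_La_R\,\xi$ (see \eqref{eq:c0c1c2} and \eqref{eq:xi}), the assumption $c_0\neq0$ forces $a_L\neq0$, $a_R\neq0$ and $\xi\neq0$. As both Poincar\'e half-maps exist, theorems \ref{teo:defyL} and \ref{teo:defyR} show that $a_L<0$ can only occur together with $4D_L-T_L^2>0$ (whence the left linear system has a real focus in $\{x<0\}$), and $a_R>0$ only together with $4D_R-T_R^2>0$ (whence the right linear system has a real focus in $\{x>0\}$). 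Listing the monodromic singularities, the origin is one precisely when $a_L>0$ and $a_R<0$, a real left focus occurs precisely when $a_L<0$, and a real right focus precisely when $a_R>0$. Hence the sign pair $(\sgn a_L,\sgn a_R)=(-,+)$ would produce two monodromic singularities and is ruled out by the uniqueness hypothesis. Three cases remain: (1) $a_L>0,\,a_R<0$ (origin); (2) $a_L>0,\,a_R>0$ (real right focus); (3) $a_L<0,\,a_R<0$ (real left focus).

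For the stability statement I would argue case by case. In case (1) the origin is the unique monodromic singularity and Proposition \ref{prop:ep} gives directly that it is attracting iff $\xi>0$. In cases (2) and (3) the singularity is a genuine real focus (and not a center, since $T_LT_R<0$ excludes $T_L=0$, $T_R=0$), so its stability is governed by the trace of the corresponding linear part: attracting iff $T_R<0$ in case (2), attracting iff $T_L<0$ in case (3). It then remains to convert the trace sign into the sign of $\xi=a_RT_L-a_LT_R$, and this is exactly where $T_LT_R<0$ enters. In case (3), with $a_L,a_R<0$ and $T_L,T_R$ of opposite signs, a short sign chase gives $\xi>0\iff T_L<0$; the analogous computation in case (2) gives $\xi>0\iff T_R<0$. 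In all three cases the singularity is thus attracting iff $\xi>0$.

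For the ``moreover'' part I would compute the sign of $\delta$ at (or just to the right of) $\lambda_0=\max\{\lambda_L,\lambda_R\}$. In case (1) we have $\lambda_0=0$ and the expansion \eqref{eq:deltarrollo} gives $\delta(y_0)=\frac{2\xi}{3a_La_R}y_0^2+\mathcal{O}(y_0^3)$ with $a_La_R<0$, hence $\sgn(\delta(y_0))=-\sgn(\xi)$ for small $y_0>0$. In cases (2) and (3) I would split according to the trace sign, which by statements \ref{lambdaL} and \ref{yL(IL)right} of Theorem \ref{teo:defyL} (resp. \ref{lambdaR} and \ref{yR(IR)right} of Theorem \ref{teo:defyR}) controls whether $\lambda_0>0$ or $\lambda_0=0$. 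When $\lambda_0>0$, exactly one of $y_L(\lambda_0),\,y_R(\lambda_0)$ vanishes while the other is strictly negative --- using $y_R(0)=0$ (resp. $y_L(0)=0$) from Remark \ref{rem:0en0} together with the strict monotonicity supplied by the first-derivative formula of Proposition \ref{prop:derivadasprimeraysegundayLyR} --- which pins down $\sgn(\delta(\lambda_0))$. When $\lambda_0=0$, the value $\delta(0)=y_R(0)-y_L(0)$ is read off directly from Remark \ref{rem:0en0} and statement \ref{yL(IL)right} (resp. \ref{yR(IR)right}). Each subcase yields $\sgn(\delta)=-\sgn(\xi)$ near $\lambda_0$, consistent with the trace-to-$\xi$ dictionary of the previous paragraph.

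The main obstacle I anticipate is not a single hard estimate but the bookkeeping: one must (i) certify that the focus in cases (2)--(3) is genuinely real, so that the trace sign really determines its stability, and (ii) carry out the sign chases relating $T_L,T_R$ to $\xi$ and the endpoint/monotonicity analysis of $\delta$ consistently across all subcases. The three hypotheses are doing precisely this work: $c_0\neq0$ fixes $\sgn a_L,\sgn a_R,\sgn\xi$ and removes degeneracies, uniqueness of the monodromic singularity discards the two-foci configuration $a_L<0,\,a_R>0$, and $T_LT_R<0$ is what makes the trace sign and $\sgn(\xi)$ equivalent.
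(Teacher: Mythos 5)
Your proposal is correct and follows essentially the same route as the paper: the paper also splits into the origin case ($a_L>0$, $a_R<0$, handled by Proposition \ref{prop:ep} and the expansion \eqref{eq:deltarrollo}) and the real-focus cases (where uniqueness of the monodromic singularity excludes the two-foci configuration, $T_LT_R<0$ yields $\sgn(\xi)=-\sgn(T_L)$ or $-\sgn(T_R)$, and the sign of $\delta$ near $\lambda_0$ is obtained from statements \ref{lambdaL}, \ref{yL(IL)right} of Theorem \ref{teo:defyL}, their right analogues, and Remark \ref{rem:0en0}). Your enumeration by the sign pair $(\sgn a_L,\sgn a_R)$ is just a relabeling of the paper's case split by the location of the singularity, and your explicit appeal to monotonicity of the half-maps merely fills in a step the paper leaves implicit.
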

\begin{proof}
If the differential system \eqref{cf} admits a unique monodromic singularity and it belongs to section $\Sigma$, then the singularity is the origin. This, together with the hypothesis $c_0\neq0$ means that $a_L>0$ and $a_R<0$. The conclusion regarding the stability of the monodromic singularity follows from Proposition \ref{prop:ep}. The conclusion regarding the sign of $\delta$ close to $\lambda_0=0$ follows from expression \eqref{eq:deltarrollo}.

If the unique monodromic singularity belongs to the half-plane $\{ (x,y)\in\mathbb{R}^2: x<0\}$, then it is a center or a focus, which implies that $4D_L-T_L^2>0$ and so $D_L>0$ and $a_L<0$. 
On the one hand, $c_0\ne 0$ implies $a_R\ne 0$. On the other hand, since the existence of both Poincar\'e half-maps is assumed,
if $a_R>0$ then the characterization \eqref{integralB} would imply the existence of another monodromic singularity in the half-plane $\{ (x,y)\in\mathbb{R}^2: x>0\}$, which contradicts the hypotheses. Therefore, one has $a_R<0$ and,  from Remark \ref{rem:0en0}, $\lambda_R=0$ and $y_R(0)=0$. Moreover, since the condition $T_LT_R<0$ holds then
$\sgn(\xi)=-\sgn(T_L)\neq0$. As it is obvious from the linearity, the stability of the singularity is given by $\sgn(T_L)$ and so it is an attracting (resp. repelling) focus provided that $\xi> 0$ (resp. $\xi< 0$). 

In order to see that $\sgn(\delta(y_0))=-\sgn(\xi)$ for $y_0$ close enough to $\lambda_0=\lambda_L$ we distinguish the following two possible cases, namely, $\lambda_L=0$ and $\lambda_L>0$. 

If $\lambda_L=0$, taking into account that $T_L\neq0$, $a_L<0,$ and $4D_L-T_L^2>0$, Theorem \ref{teo:defyL}\ref{lambdaL} implies that $T_L>0$. Moreover, Theorem \ref{teo:defyL}\ref{yL(IL)right} provides $y_L(0)<0$. Thus, $\delta(\lambda_0)=y_R(0)-y_L(0)=-y_L(0)>0$ and, therefore, $\sgn(\delta(\lambda_0))=\sgn(T_L)=-\sgn(\xi)$. The conclusion in this case follows from the continuity of $\delta$.

If $\lambda_L>0$, Theorem \ref{teo:defyL}\ref{lambdaL} implies that $T_L<0$ and $y_L(\lambda_L)=0$. Thus, $\delta(\lambda_0)=y_R(\lambda_L)-y_L(\lambda_L)=y_R(\lambda_R)<0$ and, therefore, $\sgn(\delta(\lambda_0))=\sgn(T_L)=-\sgn(\xi)$. The conclusion in this case follows again from the continuity of $\delta$.

An analogous reasoning can be done if the  monodromic singularity belongs to the half-plane $\{ (x,y)\in\mathbb{R}^2: x>0\}$ and this concludes the proof.
\end{proof}

\begin{remark}
\label{rem:estabmonodromico}
If a monodromic singularity belongs to section $\Sigma$ then, from its definition, the existence of the Poincar\'e half-maps is ensured. Therefore, the assumption of their existence in Proposition \ref{prop:estabmonodromico} is not necessary in this case.

However, this assumption is needed if the monodromic singularity is not located in the switching line $\Sigma$. Indeed, consider  for instance the case $a_L<0$, $4D_L-T_L^2>0$, $T_L\neq0$, $a_R>0$, $D_R<0$ and $T_R=a_R T_L/a_L$. On the one hand, this case provides $\xi=0$ and $T_LT_R<0$. On the other hand, it corresponds
to a differential system, for which the backward Poincar\'e half-map does not exist  and with a focus equilibrium located in the half-plane $\{(x,y)\in\mathbb{R}:x<0\}$. Obviously, $\xi$ does not determine the stability of the focus, which is given by the sign of $T_L$ (see Figure \ref{fig:remark5}).

Furthermore, although we are assuming the existence of the Poincar\'e half-maps, we stress that the existence of the displacement function $\delta$ is not necessary for the conclusion of Proposition \ref{prop:estabmonodromico}.
\end{remark}

\begin{figure}[H]
\begin{minipage}{0.49\linewidth}
\centering
\includegraphics[width=4.2cm]{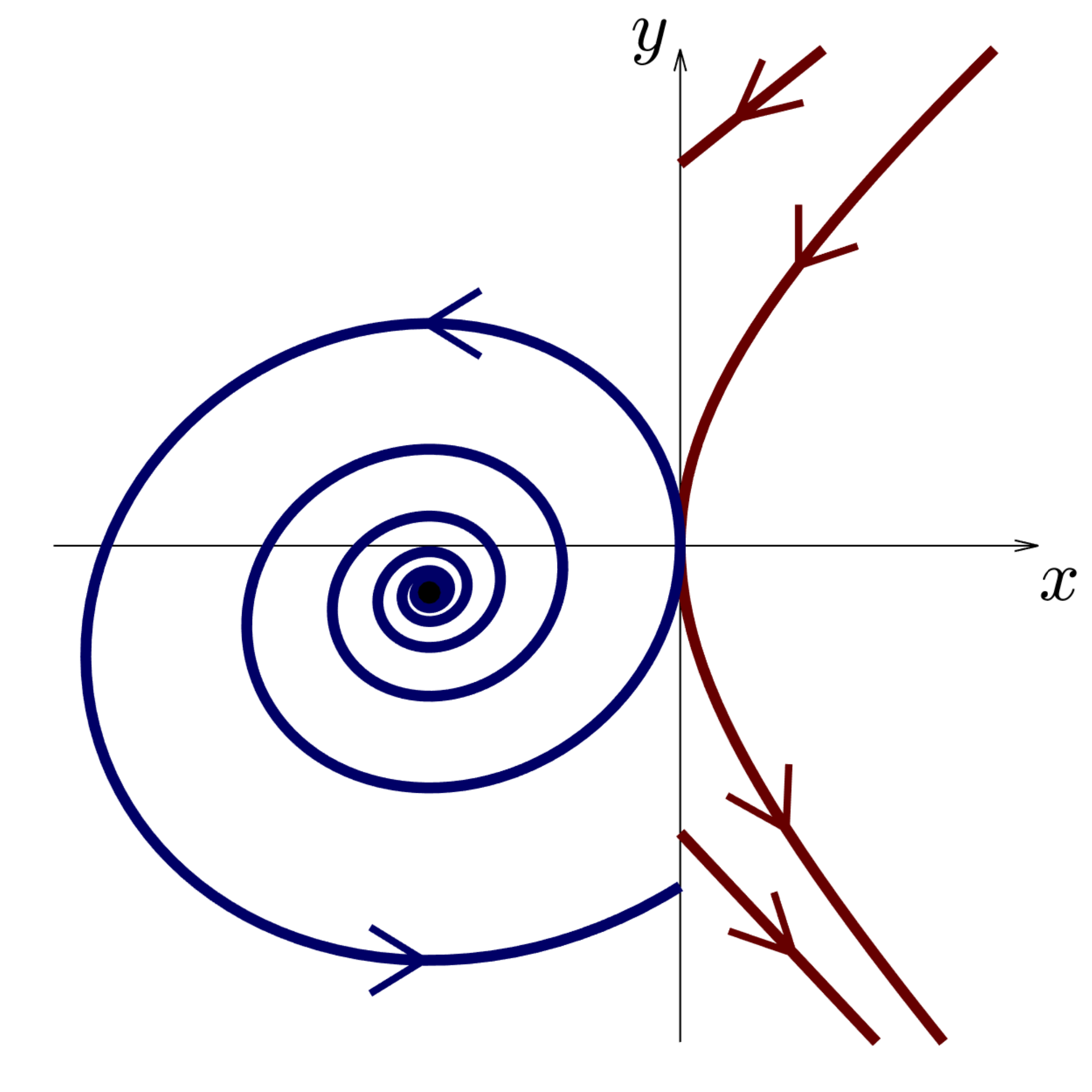}\\
(a)
\end{minipage}
\hfill
\begin{minipage}{0.49\linewidth}
\centering
\includegraphics[width=4.2cm]{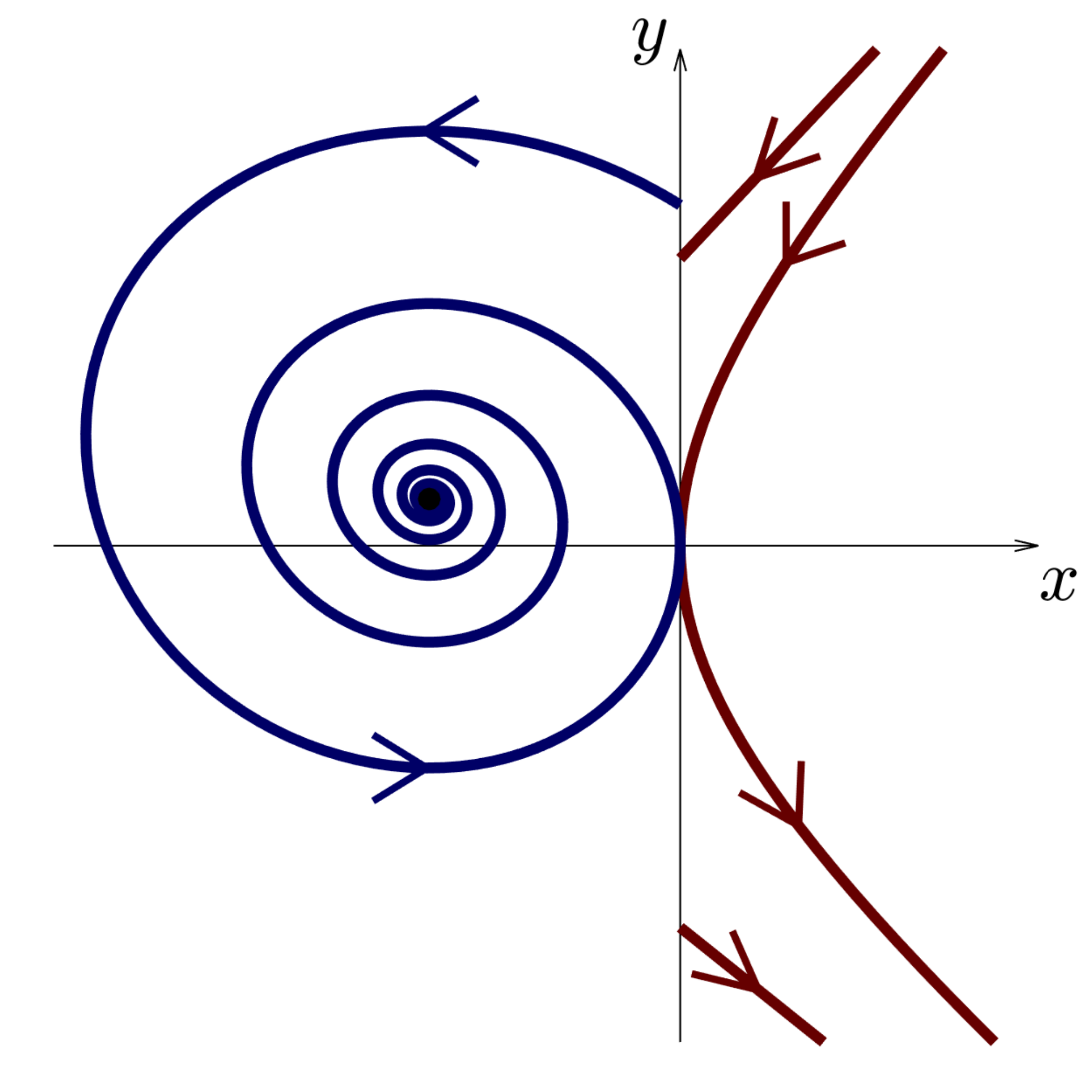}\\
(b)
\end{minipage}
\caption{
Illustrative examples of Remark \ref{rem:estabmonodromico}  regarding the need of assuming the existence of the Poincar\'e half-maps for Proposition \ref{prop:estabmonodromico}. The values of the parameters are $D_L=1$, $D_R=-1$, $a_L=-7/10$, $a_R=1$, and: (a) $T_L=2/10$, $T_R=a_R T_L/a_L=-2/7$; 
(b) $T_L=-2/10$, $T_R=2/7$. In both cases, the right Poincar\'e half-map (corresponding to a virtual saddle) does not exists, $\xi$ vanishes, and the stability of the monodromic equilibrium is thus provided by the sign of $T_L$ instead of $\xi$.
}\label{fig:remark5}
\end{figure}

Notice that when $4D_L-T_L^2>0$ and $4D_R-T_R^2>0$ the infinity is monodromic for the piecewise linear differential system \eqref{cf}. Taking the definition \eqref{displacement} of the displacement function $\delta$ into account, one can see that it is attracting (resp. repelling) provided that  $\delta(y_0)>0$ (resp.  $\delta(y_0)<0$) for $y_0>0$ big enough. The following result characterizes the stability of the infinity when $4D_L-T_L^2>0, 4D_R-T_R^2>0$, and $T_LT_R\ne 0$.

\begin{proposition}
\label{prop:vi-vi}
Consider the differential system \eqref{cf} with $4D_L-T_L^2>0, 4D_R-T_R^2>0$. Define $c_\infty:=T_L\left( T_L^2D_R-T_R^2D_L\right)$. Then, the infinity is 
attracting (resp. repelling) provided that either $T_L>0$ and $T_R>0$, or $T_LT_R<0$ and $c_\infty>0$ (resp. either $T_L<0$ and $T_R<0$, or $T_LT_R<0$ and $c_\infty<0$).   Moreover,  if $T_L T_R<0$, then $\sgn(\delta(y_0))=\sgn(c_{\infty})$ for $y_0$ big enough.
\end{proposition}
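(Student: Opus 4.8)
The plan is to read the stability of the infinity directly off the displacement function: as recalled just before the statement, under $4D_L-T_L^2>0$ and $4D_R-T_R^2>0$ the infinity is attracting (resp. repelling) exactly when $\delta(y_0)>0$ (resp. $\delta(y_0)<0$) for $y_0>0$ large enough. So I would extract the leading-order behaviour of $\delta(y_0)=y_R(y_0)-y_L(y_0)$ as $y_0\to+\infty$ from Proposition \ref{prop:sinf}. Writing $m_L=\exp\!\left(\pi T_L/\sqrt{4D_L-T_L^2}\right)$ and $m_R=\exp\!\left(\pi T_R/\sqrt{4D_R-T_R^2}\right)$, Proposition \ref{prop:sinf} gives $y_L(y_0)/y_0\to -m_L$ and $y_R(y_0)/y_0\to -1/m_R$, hence $\delta(y_0)/y_0\to m_L-1/m_R$. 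Since $m_R>0$, whenever this limit is nonzero the sign of $\delta(y_0)$ for large $y_0$ coincides with $\sgn(m_Lm_R-1)$; and because $m_Lm_R=\exp(\pi g)$ with
\[
g:=\frac{T_L}{\sqrt{4D_L-T_L^2}}+\frac{T_R}{\sqrt{4D_R-T_R^2}},
\]
I would conclude that $\sgn(\delta(y_0))=\sgn(g)$ for $y_0$ big enough, provided $g\neq0$.

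The two regimes in which $T_L$ and $T_R$ share the same sign are then immediate: if $T_L>0$ and $T_R>0$ both summands of $g$ are positive, so $g>0$ and the infinity attracts; if $T_L<0$ and $T_R<0$ both summands are negative, so $g<0$ and the infinity repels. This already matches the asserted conditions away from the regime $T_LT_R<0$.

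The remaining work is to analyse $\sgn(g)$ when $T_LT_R<0$ and to identify it with $\sgn(c_\infty)$. Abbreviating $s_L=T_L/\sqrt{4D_L-T_L^2}$ and $s_R=T_R/\sqrt{4D_R-T_R^2}$, these two numbers have opposite signs, and a short sign discussion of the two sub-cases $T_L>0<... $ shows that in both sub-cases $T_L>0>T_R$ and $T_L<0<T_R$ one has $\sgn(s_L+s_R)=\sgn(s_L)\,\sgn(s_L^2-s_R^2)=\sgn(T_L)\,\sgn(s_L^2-s_R^2)$. Clearing the (positive) denominators, the numerator of $s_L^2-s_R^2$ simplifies—once the cross terms $T_L^2T_R^2$ cancel—to $4\big(T_L^2D_R-T_R^2D_L\big)$, so $\sgn(s_L^2-s_R^2)=\sgn(T_L^2D_R-T_R^2D_L)$. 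Combining these identities yields $\sgn(g)=\sgn\!\big(T_L(T_L^2D_R-T_R^2D_L)\big)=\sgn(c_\infty)$, which simultaneously delivers the stability assertion in the case $T_LT_R<0$ and the ``moreover'' claim $\sgn(\delta(y_0))=\sgn(c_\infty)$ for $y_0$ large.

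The computation is elementary once Proposition \ref{prop:sinf} is in hand, so I do not expect a genuine obstruction; the only delicate point is the sign bookkeeping in the last paragraph, where the factor $\sgn(T_L)$ must be tracked consistently through both sub-cases and where squaring the inequality $s_L>-s_R$ is legitimate precisely because one compares a positive with a negative quantity—this is exactly what produces the extra factor $T_L$ appearing in $c_\infty$. I would also note for completeness that when $g=0$ (equivalently $c_\infty=0$ in this regime) the leading term vanishes and the present first-order argument is inconclusive, which is consistent with the proposition not asserting a definite stability in that degenerate situation.
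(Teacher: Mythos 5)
Your proof is correct and follows essentially the same route as the paper: both reduce the stability of infinity, via Proposition \ref{prop:sinf}, to the sign of $\mu=T_L/\sqrt{4D_L-T_L^2}+T_R/\sqrt{4D_R-T_R^2}$ (your $g$), dispose of the case $T_LT_R>0$ immediately, and identify $\sgn(\mu)=\sgn(c_\infty)$ when $T_LT_R<0$. The only difference is cosmetic—the paper passes through the limit of the ratio $y_L(y_0)/y_R(y_0)=e^{\pi\mu}$ rather than of $\delta(y_0)/y_0$, and leaves as a ``direct computation'' the sign identity that you carry out explicitly.
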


\begin{proof} Since $4D_L-T_L^2>0$ and $4D_R-T_R^2>0$, from Proposition \ref{prop:sinf}, it follows that
\[
\lim_{y_0\to +\infty}\frac{y_L(y_0)}{y_R(y_0)}=\lim_{y_0\to +\infty}\frac{y_L(y_0)}{y_0}\cdot\frac{y_0}{y_R(y_0)}=e^{\pi\mu},
\]
where 
\begin{equation}
\label{eq:mu}
\mu=\frac{T_L}{\sqrt{4D_L-T_L^2}}+\frac{T_R}{\sqrt{4D_R-T_R^2}}.
\end{equation}
Therefore, $\sgn(\delta(y_0))=\sgn(\mu)$ for  $y_0$ sufficiently big. In other  words, the infinity is attracting (resp. repelling) provided that $\mu>0$ (resp. $\mu<0$).

When $T_L T_R>0$, then $\sgn{(T_L)}=\sgn{(T_R)}=\sgn{(\mu)}$ and the conclusion follows. 
When $T_L T_R<0$, a direct computation provides $\sgn{(\mu)}=\sgn{(c_\infty)}$ and the proof is finished. 
\end{proof}

\begin{remark}\label{remark:c0c1c2}
This remark is devoted to providing some useful and interesting relationships among the coefficients $c_0,c_1,c_2,$ and $c_\infty$ (provided in propositions \ref{prop:derivdisplafunct} and \ref{prop:vi-vi}, respectively), which will be used later on.

First, these coefficients satisfy the following equalities
\begin{equation}
\label{ecu:relationc0c1c2}
c_0\left(\begin{array}{c}
D_L \\D_R
\end{array}
\right)-c_2\left(\begin{array}{c}
-a_L T_L\\-a_RT_R
\end{array}
\right)+c_1\left(\begin{array}{c}
a_L^2 \\a_R^2
\end{array}
\right)=0
\end{equation}
and
\begin{equation}
\label{c0c1cinfty}
a_La_RT_L^2 \,c_1+a_L^2 a_R\,{c_\infty}=T_L T_R D_L\,c_0.
\end{equation}
This last equality will appear naturally in the proof of the forthcoming Proposition \ref{prop:hyperlimitcycle} and this is the reason why we have preferred using $c_\infty$ rather than $\mu$ to describe the stability of the infinity in Proposition \ref{prop:vi-vi}.

Second, the set of polynomial functions $\{W_L,W_R\}$, with $W_L$ and $W_R$ defined in \eqref{eq:polyL} and \eqref{eq:polyR}, is linearly dependent, that is,
\[
\operatorname{rank}\left( 
\begin{array}{ccc}
D_L & -a_LT_L & a_L^2 \\
D_R & -a_RT_R & a_R^2
\end{array}
\right) \leqslant 1
\]
if, and only if, $c_0=c_1=c_2=0$.

Lastly, the function $F$ defined in \eqref{CeF} can be written as
\[
F(y_0,y_1)=-\det\left( 
\begin{array}{ccc}
1 &  -(y_0+y_1) & y_0y_1 \\
D_L & -a_LT_L & a_L^2 \\
D_R & -a_RT_R & a_R^2
\end{array}
\right).
\]
\end{remark} 

\section{Some results on the existence of limit cycles}\label{sec:condlimitcycle}

In this short section, some results on the existence of limit cycles are given in terms of the parameters of the differential system \eqref{cf}. 
The first result provides some necessary conditions for the existence of limit cycles, which will be useful in the proof of Theorem \ref{main}.

\begin{proposition}
\label{prop:neccond}
Let us consider the values $c_0$, $c_1$, and $c_2$ given in \eqref{eq:c0c1c2}. If the differential system \eqref{cf} has a limit cycle, then the following relationships hold:
\begin{enumerate}[label = {(\alph*)}]
\item $a_L^2+a_R^2\ne 0$.
\item  $T_LT_R<0$.
\item $c_0^2+(c_1c_2)^2\neq0$.
\end{enumerate}
\end{proposition}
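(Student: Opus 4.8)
The plan is to prove the three necessary conditions by exploiting the relative positions of the Poincaré half-maps established in Section~\ref{sec:properties}, together with the sign conditions on the derivative of the displacement function from Proposition~\ref{prop:derivdisplafunct}. A limit cycle corresponds to a zero $y_0^*\in\operatorname{int}(I)$ of $\delta$ at which, by hyperbolicity (which we must establish separately, or may invoke once the main theorem's machinery is in place), $\delta'(y_0^*)\neq0$; thus $F(y_0^*,y_1^*)\neq0$ where $y_1^*=y_L(y_0^*)=y_R(y_0^*)<0$. The strategy is to show that violating any of (a), (b), (c) forces a contradiction, either by making $\delta$ identically zero (so that no \emph{isolated} closed orbit can exist) or by preventing $\delta$ from vanishing at an interior point at all.

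For item (a), I would argue by contradiction: suppose $a_L=a_R=0$. Then from \eqref{eq:polyL} and \eqref{eq:polyR} the polynomials reduce to $W_L(y)=D_Ly^2$ and $W_R(y)=D_Ry^2$, and the defining integrals \eqref{integralF}, \eqref{integralB} become scale-invariant. Concretely, with $a_L=0$ the integrand $-y/W_L(y)=-1/(D_Ly)$ and the Cauchy principal value condition forces $y_L$ to be \emph{linear}, $y_L(y_0)=\kappa_L\,y_0$ for a constant $\kappa_L<0$ determined by $q_L$; similarly $y_R(y_0)=\kappa_R\,y_0$. A limit cycle then requires $\kappa_L=\kappa_R$, but in that case $\delta\equiv0$ on all of $I$, so every crossing orbit is closed and none is isolated---contradicting the definition of a limit cycle. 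Hence $a_L^2+a_R^2\neq0$.

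For item (b), the key tool is Proposition~\ref{rm:signoy0+y1}: along the graph of $y_L$ one has $\sgn(y_0+y_L(y_0))=-\sgn(T_L)$, while along $y_R$ one has $\sgn(y_0+y_R(y_0))=\sgn(T_R)$. At a limit cycle point $y_L(y_0^*)=y_R(y_0^*)=y_1^*$, evaluating $y_0^*+y_1^*$ through both relations forces $-\sgn(T_L)=\sgn(T_R)$, i.e.\ $\sgn(T_L)=-\sgn(T_R)$, which gives $T_LT_R\leqslant0$. To upgrade this to the strict inequality $T_LT_R<0$, I would rule out the degenerate subcases where a trace vanishes: if, say, $T_L=0$, then by Corollary~\ref{bi2quad}(a) the graph of $y_L$ lies \emph{on} the bisector $y_1=-y_0$, so $y_1^*=-y_0^*$; feeding this into Proposition~\ref{rm:signoy0+y1}(b) forces $T_R=0$ as well, whence by Corollary~\ref{bi2quad}(a) both half-maps coincide with the bisector and $\delta\equiv0$, again precluding an isolated closed orbit. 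Thus both traces are nonzero and of opposite sign, giving $T_LT_R<0$.

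For item (c), I would again proceed by contradiction, assuming $c_0=0$ and $c_1c_2=0$ simultaneously. The cleanest route is via the linear-dependence characterization in Remark~\ref{remark:c0c1c2}: $c_0=c_1=c_2=0$ is equivalent to the rows $(D_L,-a_LT_L,a_L^2)$ and $(D_R,-a_RT_R,a_R^2)$ being proportional, which forces $W_L$ and $W_R$ to be proportional polynomials; by \eqref{Fe1} this makes $F\equiv0$, hence $\delta'\equiv0$ wherever defined, so $\delta$ is constant and (since a limit cycle needs an interior zero) $\delta\equiv0$, contradicting isolation. The remaining possibility under $c_0=0$ is that exactly one of $c_1,c_2$ vanishes while the other does not; here I would use the hyperbola description of $\gamma=F^{-1}(\{0\})$ from Remark~\ref{rem:hype}. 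When $c_0=0$ the intersection of $\gamma$ with the bisector is exactly the origin \eqref{eq:puntito} and is non-transversal, and the relevant degenerate forms of $\gamma$ (a line through the origin, or the coordinate axes when $c_2=0$) fail to cross the open fourth quadrant transversally; combined with $F(y_0^*,y_1^*)=0$ forced by $c_0=0$ together with the sign structure of $c_1,c_2$, this is incompatible with $y_0^*>0>y_1^*$ and $\delta'(y_0^*)\neq0$.

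The main obstacle I anticipate is item (c): unlike (a) and (b), it does not reduce to a single clean sign comparison, and one must carefully track which of the several degenerate geometric configurations of the conic $\gamma$ can actually host an interior zero of $\delta$. The delicate point is that $c_0=0$ alone does \emph{not} force $\delta\equiv0$---one genuinely needs the supplementary condition $c_1c_2=0$ and the hyperbola geometry of Remark~\ref{rem:hype} to exclude a transversal (hence hyperbolic, hence isolated) crossing in $\QQ$. I would therefore organize the argument around the observation that an isolated crossing orbit requires the zero set $\gamma$ to meet the open fourth quadrant transversally at $(y_0^*,y_1^*)$, and then show that $c_0=0$ together with $c_1c_2=0$ degenerates $\gamma$ into one of the configurations that cannot realize such a transversal interior crossing.
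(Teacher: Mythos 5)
Your items (a) and (b) are correct: (b) is essentially the paper's own argument (sign comparison via Proposition \ref{rm:signoy0+y1}, then ruling out $T_L=T_R=0$ because both graphs would lie on the bisector and $\delta\equiv0$), and (a) reaches the paper's conclusion by a valid, slightly more computational route (explicit linearity $y_L(y_0)=\kappa_L y_0$, $y_R(y_0)=\kappa_R y_0$ of the half-maps when $a_L=a_R=0$, versus the paper's one-line homogeneity/scaling argument). The genuine problems are both in item (c). First, in the sub-case $c_0=c_1=c_2=0$, your step ``$F\equiv0$, hence $\delta'\equiv0$ wherever defined, so $\delta$ is constant'' is unjustified. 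Proposition \ref{prop:derivdisplafunct} equates $\sgn(\delta')$ with $\sgn(F)$ \emph{only at zeros of} $\delta$: the identity $\delta'=C\cdot F$ is derived under $y_L(y_0^*)=y_R(y_0^*)$. Off the zero set, writing $W\propto W_L\propto W_R$, one has $\delta'(y_0)=\frac{y_0}{W(y_0)}\bigl(\frac{W(y_R(y_0))}{y_R(y_0)}-\frac{W(y_L(y_0))}{y_L(y_0)}\bigr)$ up to a positive factor, which has no reason to vanish when $y_L(y_0)\neq y_R(y_0)$. And knowing only that every zero of $\delta$ is degenerate (even adding $\delta''=0$ there, which follows from part (b) of Proposition \ref{prop:derivdisplafunct} since $c_0=c_2=0$) does not force $\delta\equiv0$: an analytic function can have an isolated zero of odd order $\geqslant 3$. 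The missing idea — exactly what the paper uses — is that proportionality of $W_L$ and $W_R$ makes the graphs of $y_L$ and $y_R$ orbits of one and the same cubic vector field \eqref{dy1L}--\eqref{dy1R}; since they share the point $(y_0^*,y_1^*)$, the two graphs coincide as curves, so $\delta\equiv0$ and no periodic orbit is isolated.

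Second, your treatment of the sub-case $c_0=0$ with exactly one of $c_1,c_2$ nonzero does not produce a contradiction, because there is no geometric obstruction there. For instance, if $c_0=c_1=0\neq c_2$, then $F(y_0,y_1)=c_2(y_0+y_1)$, and by item (b) together with Proposition \ref{rm:signoy0+y1} any crossing point satisfies $y_0^*+y_1^*\neq0$; hence $F(y_0^*,y_1^*)\neq0$ and the putative limit cycle would simply be hyperbolic — nothing is ``incompatible'' with $y_0^*>0>y_1^*$. (Your invocation of $\delta'(y_0^*)\neq0$ at the outset is also circular: hyperbolicity of limit cycles is proved later, using this very proposition.) What actually eliminates this sub-case is algebra, not conic geometry: the linear relation \eqref{ecu:relationc0c1c2} with $c_0=c_1=0$ gives $c_2\,a_LT_L=c_2\,a_RT_R=0$, and since (b) yields $T_LT_R\neq0$, either $c_2=0$ or $a_L=a_R=0$, the latter contradicting (a); symmetrically, $c_0=c_2=0\neq c_1$ forces $a_L=a_R=0$. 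Thus, under (a) and (b), the condition $c_0^2+(c_1c_2)^2=0$ is \emph{equivalent} to $c_0=c_1=c_2=0$ (this is how the paper organizes the proof, via Remark \ref{remark:c0c1c2}), and only that all-zero case requires a dynamical argument — namely the orbit-coincidence argument above, not a degenerate-conic transversality discussion.
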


\begin{proof} {(a)}
If $a_L=a_R=0$, then the piecewise linear differential system \eqref{cf} is homogeneous. Hence, any positive multiple of an orbit is also an orbit and, consequently, any periodic orbit must be contained in a continuum of periodic orbits.

{(b)} Assume that a crossing periodic orbit exists, that is, there exists $y_0^*\in I$ such that $y_L(y_0^*)=y_R(y_0^*)=y_1^*$. From Proposition  \ref{rm:signoy0+y1}, one has $-\sgn(T_L)=\sgn(y_0^*+y_1^*)=\sgn(T_R)$ and, consequently, $T_LT_R\leq0$.
Thus, again from Proposition  \ref{rm:signoy0+y1} it follows that  
\begin{equation} \label{eq:de9} \sgn(y_0+y_L(y_0))=-\sgn(T_L)=\sgn(T_R)=\sgn(y_0+y_R(y_0)),\end{equation}
for all $y_0\in \operatorname{int}(I)$.
 Note that if $T_L T_R=0$, then \eqref{eq:de9} implies $y_L(y_0)=y_R(y_0)$ for all $y_0\in \operatorname{int}(I)$, which corresponds to a continuum of periodic orbits.
Therefore, the inequality $T_L T_R<0$ holds provided the existence of a limit cycle.

{(c)} Assume that a limit cycle exists and suppose, by reduction to absurdity, that $c_0^2+(c_1c_2)^2=0$ holds.  On the one hand, the existence of the limit cycle implies the existence of an intersection point between the graphs of Poincar\'e half-maps $y_L$ and $y_R$. On the other hand, from items (a), (b), and expression \eqref{ecu:relationc0c1c2}, the equality $c_0^2+(c_1c_2)^2=0$ is equivalent to $c_0=c_1=c_2=0$.
Thus, from Remark \ref{remark:c0c1c2}, the set of polynomial functions $\{W_L,W_R\}$ is linearly dependent. Consequently, the cubic vector fields $X_L$ and $X_R$, provided in \eqref{dy1L} and \eqref{dy1R}, 
have the same orbits. As stated in theorems \ref{teo:defyL}\ref{rm:derivadaL} and \ref{teo:defyR}\ref{rm:derivadaR}, the graph of Poincar\'e half-map $y_L$ is an orbit of $X_L$ and the graph of Poincar\'e half-map $y_R$ is an orbit of $X_R$. Since they coincide at one point
both graphs must be equal. This contradicts the existence of a limit cycle.
\end{proof}

Notice that, by using Green's formula, one can deduce (as it is done, for example, in \cite{FreireEtAl12}) that the existence of a crossing periodic orbit for the differential system \eqref{cf} imposes the inequality $T_LT_R\leqslant 0$. However, in the proof of Proposition \ref{prop:neccond}, for  the sake of completeness, we have preferred to use directly the properties of Poincar\'e half-maps that are obtained from the integral characterization given in \cite{CarmonaEtAl19} and summarized in Section \ref{sec:properties}.

 Propositions \ref{prop:estabmonodromico} and  \ref{prop:vi-vi} give the stability, in the monodromic case, of the unique monodromic singularity and of the infinity, respectively. Then, a simple combination of both results provides a sufficient condition for the existence of limit cycles. 

\begin{corollary}
\label{cor:existence}
Under the assumptions of propositions  \ref{prop:estabmonodromico} and \ref{prop:vi-vi}, a limit cycle of the differential system \eqref{cf} exists provided that 
$\xi c_{\infty}>0.$
\end{corollary}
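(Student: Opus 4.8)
\textbf{Plan for the proof of Corollary \ref{cor:existence}.}
The plan is to exploit the fact that, under the hypotheses of Propositions \ref{prop:estabmonodromico} and \ref{prop:vi-vi}, both the unique monodromic singularity (inner boundary of the region swept by periodic candidates) and the infinity (outer boundary) have determined stabilities, and to detect a limit cycle by a change of sign of the displacement function $\delta$ between these two extremes. Concretely, I would first record that the combined hypotheses force $T_LT_R<0$, so that Proposition \ref{prop:estabmonodromico} applies and gives the sign of $\delta$ near the left endpoint $\lambda_0$ of $I$, namely $\sgn(\delta(y_0))=-\sgn(\xi)$ for $y_0$ close to $\lambda_0$. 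Simultaneously, Proposition \ref{prop:vi-vi} applies in its $T_LT_R<0$ regime and yields $\sgn(\delta(y_0))=\sgn(c_\infty)$ for $y_0$ large; here I would note that $4D_L-T_L^2>0$ and $4D_R-T_R^2>0$ guarantee (via the remark preceding Proposition \ref{prop:sinf}) that $I=[\lambda_0,+\infty)$ is unbounded, so ``$y_0$ big enough'' genuinely lives in the domain of $\delta$.

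The second step is the sign comparison. The assumption $\xi c_\infty>0$ means $\sgn(c_\infty)=\sgn(\xi)$, hence
\[
\sgn(\delta(y_0))=-\sgn(\xi)\quad\text{near }\lambda_0,
\qquad
\sgn(\delta(y_0))=\sgn(c_\infty)=\sgn(\xi)\quad\text{for large }y_0.
\]
Since $\xi c_\infty>0$ forces $\xi\neq0$, these two signs are opposite and nonzero. Because $\delta$ is continuous on the connected interval $I=[\lambda_0,+\infty)$ (it is the difference of the two continuous Poincar\'e half-maps), the intermediate value theorem produces a point $y_0^*\in\operatorname{int}(I)$ with $\delta(y_0^*)=0$, which corresponds to a crossing periodic orbit. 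To upgrade this periodic orbit to a \emph{limit cycle} (an isolated closed crossing orbit), I would invoke the uniqueness/hyperbolicity machinery: by Proposition \ref{prop:neccond} the existence of a crossing periodic orbit together with the non-sliding setting is compatible with the framework, and the zero $y_0^*$ of $\delta$ is isolated because the analyticity of $y_L,y_R$ on $\operatorname{int}(I)$ (Theorems \ref{teo:defyL}\ref{analyticL}, \ref{teo:defyR}\ref{analyticR}) makes $\delta$ analytic and not identically zero (its boundary signs differ), so its zeros cannot accumulate.

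The main obstacle I anticipate is \emph{verifying that the two propositions can be invoked simultaneously and that the boundary behaviours refer to the same interval $I$}. One must check that the hypotheses of Proposition \ref{prop:estabmonodromico} (existence of both half-maps, $T_LT_R<0$, $c_0\neq0$, and a \emph{unique} monodromic singularity) are consistent with those of Proposition \ref{prop:vi-vi} ($4D_L-T_L^2>0$, $4D_R-T_R^2>0$, $T_LT_R\neq0$); in particular the focus conditions at infinity are exactly what make the interior singularity (or the origin) monodromic, and one should confirm there is no tension between ``$c_0\neq0$'' and the infinity hypotheses. A secondary subtlety is the precise meaning of $\delta$ near $\lambda_0$: if the monodromic singularity lies on $\Sigma$ then $\lambda_0=0$ and Proposition \ref{prop:ep}/\eqref{eq:deltarrollo} governs the sign, whereas if it lies off $\Sigma$ one relies on the endpoint computation inside the proof of Proposition \ref{prop:estabmonodromico}; either way the conclusion $\sgn(\delta)=-\sgn(\xi)$ near $\lambda_0$ holds, so the argument is uniform, but the write-up must make clear it does not matter which case occurs.
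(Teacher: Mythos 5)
Your proposal is correct and follows essentially the same route as the paper's proof: combine the boundary signs $\sgn(\delta(y_0))=-\sgn(\xi)$ near $\lambda_0$ (Proposition \ref{prop:estabmonodromico}) and $\sgn(\delta(y_0))=\sgn(c_\infty)$ for large $y_0$ (Proposition \ref{prop:vi-vi}), use the hypothesis $\xi c_\infty>0$ to make these signs opposite, apply the intermediate value theorem to get a zero of $\delta$, and invoke analyticity of $\delta$ to conclude that the zero is isolated, hence a limit cycle. The compatibility checks you flag as a potential obstacle are harmless but not needed, since the corollary simply assumes both sets of hypotheses hold simultaneously.
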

\begin{proof}
Under the assumptions of propositions \ref{prop:vi-vi} and  \ref{prop:estabmonodromico}, one has that $\sgn(\delta(y_0))=\sgn(c_{\infty})$ for $y_0$ big enough and $\sgn(\delta(y_0))=-\sgn(\xi)$ for $y_0$ sufficiently close to $\lambda_0$.
Then, the existence of a periodic solution follows from the intermediate value theorem, which implies the existence of a zero of $\delta$. The fact that this periodic solution is actually a limit cycle follows from the analyticity of $\delta$, which implies that the zero is isolated.
\end{proof}

As mentioned in the introduction, this corollary is an extension to sewing differential systems of \cite[Proposition 15]{FreireEtAl98}, where conditions for the existence of limit cycles are given for the continuous case.

\section{Uniqueness of hyperbolic limit cycles}\label{sec:hyperlimitcycle}

This section is devoted to showing that if a hyperbolic limit cycle exists for a piecewise linear differential system \eqref{cf}, then it is unique and its stability is determined by the sign of $\xi$ defined in \eqref{eq:xi}.

We know that the curve $\gamma=F^{-1}(\{0\})$, where $F$ is given in \eqref{CeF}, separates the attracting hyperbolic crossing limit cycles from the repelling ones, as stated in Proposition \ref{prop:derivdisplafunct}. In addition, from Remark \ref{rem:hype}, $\gamma$ divides the fourth quadrant $\QQ$ into two connected components. Then, in Proposition \ref{prop:hyperlimitcycle}, the uniqueness of the hyperbolic limit cycles will be obtained, at first under the generic condition $c_0c_1(c_2^2-c_1c_0)\neq0$, by showing that the intersection points between the Poincar\'{e} half-maps which are not located in $\gamma$, if any, are all of them included in a single connected component of $\QQ\setminus\gamma$. By a simple reasoning on the persistence of hyperbolic limit cycles and their stability under small perturbations, the result is immediately extended for all the cases. The proof of Proposition \ref{prop:hyperlimitcycle} is based on Proposition \ref{prop:cortegamma}, which analyzes the intersection between the graph of each Poincar\'e half-map with $\gamma$ in the interior of the fourth quadrant $\QQ$. 

The next result, whose proof follows directly from elementary analysis for functions of one variable, will be of major importance in the proof of propositions \ref{prop:cortegamma} and \ref{prop:hyperlimitcycle}.

\begin{lemma}\label{lem:analysis}
Let $I$ be an interval and  $\eta:I\rightarrow\mathbb{R}$ be a differentiable function. Assume that $\sgn(\eta'(u^*))$ is distinct from zero and the same for every $u^*\in I$ such that $\eta(u^*)=0$. Then, the function $\eta$ has at most one zero in $I$.
\end{lemma}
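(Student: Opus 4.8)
The plan is to prove the contrapositive-style monotonicity statement by a standard ``sign of the derivative at every zero'' argument. Suppose, for contradiction, that $\eta$ has two distinct zeros $u_1 < u_2$ in $I$. Since $\eta$ is differentiable (hence continuous) on the interval $[u_1,u_2]\subseteq I$ and vanishes at both endpoints, I would invoke Rolle's theorem to produce an interior point where the derivative vanishes; but the real content lies one level deeper, at the zeros themselves, so I instead argue directly from the local behaviour of $\eta$ near $u_1$ and $u_2$.

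Concretely, by hypothesis $\eta'(u_1)$ and $\eta'(u_2)$ are both nonzero and carry the \emph{same} sign, say both positive (the case of both negative is symmetric, or follows by replacing $\eta$ with $-\eta$). From $\eta'(u_1)>0$ and $\eta(u_1)=0$ we deduce that $\eta$ is strictly increasing through $u_1$, so $\eta(u)>0$ for $u$ slightly larger than $u_1$; in particular $\eta$ takes strictly positive values just to the right of $u_1$. Symmetrically, from $\eta'(u_2)>0$ and $\eta(u_2)=0$ we deduce $\eta(u)<0$ for $u$ slightly smaller than $u_2$. Hence on the open interval $(u_1,u_2)$ the function $\eta$ is positive near the left end and negative near the right end, so by the intermediate value theorem there exists an interior zero $u^*\in(u_1,u_2)$ with $\eta(u^*)=0$.

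The key step, and the only genuine obstacle, is to extract a contradiction from this intermediate zero. Among all zeros of $\eta$ in the compact set $[u_1,u_2]$ one can select, for instance, the supremum $u^\dagger$ of the zeros lying in $(u_1,u_2)$ that are approached from the right by strictly negative values; by continuity $\eta(u^\dagger)=0$, and by the choice of $u^\dagger$ together with the fact that $\eta<0$ immediately to its right, the function cannot be strictly increasing at $u^\dagger$, forcing $\eta'(u^\dagger)\leqslant 0$. This contradicts the hypothesis that $\eta'(u^*)>0$ at \emph{every} zero $u^*\in I$. The cleaner way to package this, which I would prefer in the final write-up, is to avoid choosing a special zero and instead observe that the assumption immediately yields that $\eta$ is strictly monotone in a neighbourhood of each of its zeros with a fixed orientation; two zeros with the same derivative sign would force $\eta$ to change sign in the same direction at each, creating an additional sign change in between and hence, by the intermediate value theorem, an additional zero at which (again by continuity of sign of $\eta$ on either side) the derivative would have the opposite sign, contradicting the uniformity of $\sgn(\eta'(u^*))$. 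Either formulation reduces the lemma to the intermediate value theorem plus the definition of the derivative, which is exactly the ``elementary analysis for functions of one variable'' advertised in the statement.
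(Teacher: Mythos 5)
You are proving a statement the paper leaves unproved (its proof is dismissed as ``elementary analysis''), so the only question is whether your argument is complete, and as written it is not. The overall strategy---two zeros $u_1<u_2$ with, say, $\eta'>0$ at every zero force $\eta>0$ just to the right of $u_1$ and $\eta<0$ just to the left of $u_2$, after which one exhibits a zero whose right-hand difference quotients are $\leqslant 0$---is the correct elementary route. The gap is in the pivotal selection step. You take $u^\dagger$ to be the supremum of the zeros in $(u_1,u_2)$ that are ``approached from the right by strictly negative values,'' but you never show this set is nonempty; the zero produced by the intermediate value theorem need not lie in it (by hypothesis that zero has $\eta'>0$, hence $\eta>0$ on a punctured right-neighbourhood of it). In fact, producing a single element of this set is the entire content of the lemma: any such zero already satisfies $\eta'\leqslant 0$ and yields the contradiction, so the supremum is superfluous, and the one thing that needed proof is exactly what is assumed. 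Moreover, even granting nonemptiness, a supremum need not inherit the defining property, so the claim ``$\eta<0$ immediately to its right'' is unjustified when the supremum is not attained. Your preferred ``cleaner packaging'' has the same defect in sharper form: the intermediate value theorem hands you some additional zero, but nothing forces the derivative there to have the opposite sign---by hypothesis it is positive---so that version is circular, or an infinite regress.

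The repair is short. Choose $\varepsilon>0$ with $\eta<0$ on $(u_2-\varepsilon,u_2)$ and set $u^\dagger=\sup\{u\in[u_1,u_2):\eta(u)=0\}$. This set contains $u_1$, so it is nonempty, and it is contained in $[u_1,u_2-\varepsilon]$; by continuity $\eta(u^\dagger)=0$. There are no zeros of $\eta$ in $(u^\dagger,u_2)$, and since $\eta<0$ near $u_2$, the intermediate value theorem forces $\eta<0$ on all of $(u^\dagger,u_2)$. Hence the right-hand difference quotients of $\eta$ at $u^\dagger$ are negative, so $\eta'(u^\dagger)\leqslant 0$, contradicting the hypothesis. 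Alternatively: every zero has nonzero derivative, hence is isolated, so the zero set in the compact interval $[u_1,u_2]$ is finite; two consecutive zeros, with $\eta>0$ just to the right of the first and $\eta<0$ just to the left of the second and no zero in between, give an immediate intermediate-value contradiction.
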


Remind that under condition $c_1(c_2^2-c_1c_0)\neq0$ (see Remark \ref{rem:hype})  $\gamma$ is a non-degenerate hyperbola in
$\mathbb{R}^2$. The next result 
affirms that, under this condition,
 the graph of each Poincar\'e half-map intersects $\gamma$ at most once in the interior of the fourth quadrant $\Int(\QQ)$ and the intersection, if it exists, is transversal. 

\begin{proposition}
\label{prop:cortegamma} 
Consider the values $c_0,c_1,c_2$ defined in \eqref{eq:c0c1c2} and the set $\gamma$ defined in Remark \ref{rem:hype}. Let us assume that $c_1(c_2^2-c_1c_0)\neq0$. Then, the graph of each Poincar\'e half-map intersects $\gamma$ at most once  in the interior of the fourth quadrant $\Int(\QQ)$ and the intersection, if it exists, is transversal.
\end{proposition}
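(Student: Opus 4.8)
The plan is to establish the claim for the forward half-map $y_L$; the statement for $y_R$ then follows by the change $(t,x)\mapsto(-t,-x)$, $(a_L,D_L,T_L)=(-a_R,D_R,-T_R)$ already used to pass from Theorem \ref{teo:defyL} to Theorem \ref{teo:defyR}. Parametrizing the graph of $y_L$ by $y_0$, I set $\eta_L(y_0):=F(y_0,y_L(y_0))$ on $\operatorname{int}(I_L)$; since $\operatorname{int}(I_L)\subset(0,+\infty)$ and $y_L(y_0)<0$ there, the points of the graph lying in $\Int(\QQ)$ that belong to $\gamma$ are exactly the zeros of $\eta_L$. By Lemma \ref{lem:analysis} it then suffices to show that $\eta_L'$ has the same nonzero sign at every such zero, which simultaneously yields transversality and the bound of at most one intersection. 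If $T_L=0$ the graph is the bisector $y_1=-y_0$ by Corollary \ref{bi2quad}, so $\eta_L(y_0)=c_0-c_1y_0^2$, which has at most one positive zero and satisfies $\eta_L'(y_0)=-2c_1y_0\neq0$ there; hence I assume $T_L\neq0$ from now on.

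First I compute the sign of $\eta_L'$. Inserting the expression for $y_L'$ from Proposition \ref{prop:derivadasprimeraysegundayLyR} and clearing the negative factor $y_L(y_0)W_L(y_0)$ gives $\sgn(\eta_L'(y_0))=-\sgn\!\big(G_L(y_0,y_L(y_0))\big)$, where $G_L(y_0,y_1)=(c_1y_1+c_2)\,y_1W_L(y_0)+(c_1y_0+c_2)\,y_0W_L(y_1)$. At a zero of $\eta_L$ one has $F=0$, i.e. $c_1y_0y_1+c_2(y_0+y_1)=-c_0$; moreover the first component of the dependence relation \eqref{ecu:relationc0c1c2}, namely $c_0D_L+c_2a_LT_L+c_1a_L^2=0$, is equivalent to the identity $c_1W_L(y)=D_L(c_1y^2-c_0)-a_LT_L(c_1y+c_2)$. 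Substituting this identity into $c_1G_L$ and using $F=0$ repeatedly collapses all terms and yields $c_1G_L=(c_2^2-c_0c_1)\,s\,(D_Ls-a_LT_L)$ with $s:=y_0+y_1$, that is $c_1G_L=(c_2^2-c_0c_1)\big(W_L(s)-a_L^2\big)$. As $c_1\neq0$ and $c_2^2-c_0c_1\neq0$ by hypothesis, the sign of $\eta_L'$ at any zero is controlled by the single quantity $m:=s\,(D_Ls-a_LT_L)=W_L(s)-W_L(0)$.

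The crux is therefore the positivity $m>0$ along the graph in $\Int(\QQ)$, and this is where the orbit structure is indispensable: $m>0$ is false for arbitrary $(y_0,y_1)$ with $W_L(y_0),W_L(y_1)>0$, and must be derived from the fact that $(y_0,y_1)$ lies on the graph of $y_L$. The cases $a_L=0$ or $D_L=0$ are immediate (then $m=D_Ls^2>0$, respectively $m=-a_LT_Ls$ with $\sgn(m)=\sgn(a_L)>0$), so assume $a_LD_L\neq0$. Now $m$ vanishes exactly at $s=0$ and $s=a_LT_L/D_L$, the two solutions of $W_L(s)=W_L(0)$. By Corollary \ref{bi2quad} one has $s\neq0$ on $\operatorname{int}(I_L)$, so it remains to control the sign of $g(y_0):=s(y_0)-a_LT_L/D_L$. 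I apply Lemma \ref{lem:analysis} a second time, now to $g$: a direct computation shows that at every zero of $g$ (i.e. on the line $s=a_LT_L/D_L$) one has $g'(y_0)=1+y_L'(y_0)=\dfrac{a_LT_L}{D_L\,y_L(y_0)}$, whose sign is the constant $-\sgn(a_LT_LD_L)$ because $y_L(y_0)<0$. Hence $g$ has at most one zero in $I_L$. A short, spectral-type-uniform inspection of the two endpoints of the graph (where, depending on the type, it leaves the origin, leaves a point of the positive $y_0$-axis, or escapes to infinity) shows that the endpoint signs of $g$ are incompatible with a single sign change; therefore $g$ has no zero in $\operatorname{int}(I_L)$, the sign of $g$ is constant, and evaluating $m=D_L\,s\,g$ gives $m>0$ throughout.

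With $m>0$ in hand, $\sgn(\eta_L'(y_0))=-\sgn(c_1)\,\sgn(c_2^2-c_0c_1)$ is constant and nonzero at every zero of $\eta_L$, so Lemma \ref{lem:analysis} yields at most one zero, each of them transversal. I expect the genuine difficulty to be concentrated entirely in the positivity $m>0$: the algebraic reduction to $m$ and the ``at most one zero'' step for $g$ are uniform and clean, but pinning the sign of $g$ at the endpoints is the step that still touches the different spectral configurations, and the decisive tool there is the constancy of $\sgn\big(\tfrac{d}{dt}(y_0+y_1)\big)$ along the cubic field $X_L$ on the line $s=a_LT_L/D_L$.
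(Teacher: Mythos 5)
Your algebraic core is correct and verifiable: with $\eta_L(y_0)=F(y_0,y_L(y_0))$ one indeed gets $\sgn(\eta_L'(y_0))=-\sgn(G_L(y_0,y_L(y_0)))$ after clearing the negative factor $y_L(y_0)W_L(y_0)$, and on $F=0$ the identity $c_1G_L=(c_2^2-c_0c_1)\,s\,(D_Ls-a_LT_L)$ holds (the factor $c_2^2-c_0c_1$ is exactly $(c_1y_0+c_2)(c_1y_1+c_2)$ evaluated on $F=0$, using the first component of \eqref{ecu:relationc0c1c2}). The symmetry reduction from $y_R$ to $y_L$ is also legitimate, since the swap $(a_L,T_L,D_L)\leftrightarrow(-a_R,-T_R,D_R)$ only reverses the signs of $c_0,c_1,c_2$, hence preserves $\gamma$ and the hypothesis. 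The second application of Lemma \ref{lem:analysis}, now to $g(y_0)=y_0+y_L(y_0)-a_LT_L/D_L$ with $g'=a_LT_L/(D_L y_L(y_0))$ at zeros of $g$, is correct as well. Note that this is a genuinely different reduction from the paper's: the paper parametrizes $\gamma$ as $y_1=\phi(y_0)$ and obtains the sign function $W_L(\phi(y_0))(c_0-c_1y_0^2)$, whereas you parametrize the half-map graph and obtain $W_L(s)-a_L^2$ with $s=y_0+y_L(y_0)$.

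The gap is that your proof stops exactly at its decisive step. The positivity $m:=W_L(s)-a_L^2>0$ (equivalently: $g$ has no zero \emph{and} has a determined sign) is only asserted through ``a short, spectral-type-uniform inspection of the two endpoints,'' which is never performed --- and it is not as uniform as you suggest. In the saddle case $a_L>0$, $D_L<0$, the graph of $y_L$ terminates at the pair of roots of $W_L$, so $s\to\mu_L+\nu_L=a_LT_L/D_L$ and $g\to0$ at the right endpoint: there is no ``endpoint sign'' there, and one must argue instead from the left endpoint, using that the graph starts at the origin (Remark \ref{rem:0en0}), so that $\sgn(g(0^+))=-\sgn(a_LT_L/D_L)$ coincides with $\sgn(g')$ at zeros of $g$ and thereby forbids a first zero. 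In the real-focus cases $a_L<0$ the sign of $g$ at the finite endpoint ($\lambda_L$, or $0$ with $y_L(0)<0$) is not determined, and one must invoke Proposition \ref{prop:sinf} to get $\sgn(g)=-\sgn(T_L)$ at infinity and match it against the crossing direction; the cases $a_L=0$, $D_L=0$, and the node/virtual cases need their own (easy) checks. All of these verifications do go through --- your claim $m>0$ is true --- but carrying them out is precisely the endpoint-by-endpoint, configuration-dependent analysis that your write-up defers, so as it stands the proof is incomplete at the step that carries all the difficulty. For contrast, the paper avoids this entirely: along $\gamma$ its sign function $W_L(\phi(y_0))(c_0-c_1y_0^2)$ can change sign only at the single point $\bar y_0=\sqrt{c_0/c_1}$, and the only geometric input needed to exclude zeros of $\phi-y_L$ on both sides of $\bar y_0$ is that the graph of $y_L$ cannot cross the bisector (Corollary \ref{bi2quad}) --- one uniform fact replacing the whole endpoint inspection.
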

\begin{proof}
Under the hypothesis $c_1(c_2^2-c_1c_0)\neq0$, 
from Remark \ref{rem:hype},  the set $\gamma$ is a hyperbola and only one of its branches may intersect the fourth quadrant $\QQ$. In this case, the portion of $\gamma$ included in $\Int(\QQ)$, which will be denoted by $\widehat \gamma$, can be written as a graph $y_1=\phi(y_0)$, where $\phi:I_{\phi}\to\R$ is a continuous rational function defined in an open interval $I_{\phi}$ that does not contain the point and $-c_2/c_1$ (the value of the vertical asymptote). 

According to Corollary \ref{bi2quad}, the graph of any Poincar\'e half-map ($y_L$ or $y_R$) 
is either included in the bisector of the fourth quadrant or it does not intersect this bisector except perhaps at the origin. In the first case, the intersection between the graph of the Poincar\'e half-maps and $\gamma$ has already been treated in 
Remark \ref{rem:hype}  such that it only remains to prove the result when the graphs of both Poincar\'e half-maps do not intersect the bisector of the fourth quadrant for $y_0>0$.

In order to analyze the number of intersection points between the graphs of the Poincar\'e half-maps and $\widehat\gamma$, we consider the following difference functions 
\begin{equation}\label{eta}
\eta_L(y_0)=\phi(y_0)-y_L(y_0)\quad\text{and}\quad \eta_R(y_0)=\phi(y_0)-y_R(y_0)
\end{equation}
defined on $I_L^{\eta}:=\Int (I_L)\cap I_{\phi}$ and $I_R^{\eta}:=\Int (I_R)\cap I_{\phi}$, respectively. Hence, the proof of the proposition will follow by showing that each one of the functions $\eta_L$ and $\eta_R$ has at most one zero and, if it exists, it is simple.

If $y_0^*>0$ is such that $\eta_L(y_0^*)=0$ or $\eta_R(y_0^*)=0$, then
\[
\eta_L'(y_0^*)=\dfrac{1}{y_L(y_0^*)W_L(y_0^*)(c_2 +c_1 y_0^*)}\big\langle\nabla F(y_0^*,\phi(y_0^*)),X_{L}(y_0^*,\phi(y_0^*))\big\rangle
\]
or
\[
\eta_R'(y_0^*)=\dfrac{1}{y_R(y_0^*)W_R(y_0^*)(c_2 +c_1 y_0^*)}\big\langle\nabla F(y_0^*,\phi(y_0^*)),X_{R}(y_0^*,\phi(y_0^*))\big\rangle,
\]
respectively. 

Since $\widehat \gamma$ is a portion of a branch of the hyperbola $\gamma$, then $c_2 +c_1 y_0$ has constant sign for $y_0 \in I_{\phi}.$ Also, the existence of Poincar\'e half-maps implies that $W_L(y_0)$ and $W_R(y_0)$ are strictly positive and $y_L(y_0)$ and $y_R(y_0)$ are strictly negative for $y_0\in \Int(I_L)$ and $y_0\in \Int(I_R),$ respectively (see theorems \ref{teo:defyL}\ref{signWL}  and \ref{teo:defyR}\ref{signWL}).  Therefore, the denominators $y_L(y_0)W_L(y_0)(c_2 +c_1 y_0)$ and $y_R(y_0)W_R(y_0)(c_2 +c_1 y_0)$ have constant signs for $y_0\in I_L^{\eta}$ and $y_0\in I_R^{\eta}$, respectively. 

In light of Lemma \ref{lem:analysis},  the number of zeros of $\eta_L$ and $\eta_R$ will be studied by means of the inner products $\langle\nabla F(y_0,y_1),X_{L}(y_0,y_1)\rangle$ and $\langle\nabla F(y_0,y_1),X_{R}(y_0,y_1)\rangle$, for $(y_0,y_1)\in\widehat\gamma$ such that $y_0\in \Int(I_L)$ and $y_0\in \Int(I_R)$, respectively.

The gradient of $F$ is obtained by taking derivatives in expression \eqref{Fe1}. Moreover, from this same expression, 
the equation $F(y_0,y_1)=0$ is equivalent to the relationship $W_L(y_1)W_R(y_0)=W_L(y_0)W_R(y_1)$, because $(y_0,y_1)$ is located in $\Int(\QQ)$ and so $y_0\ne y_1$.
Now, by substituting this last relationship into the inner products we get for $y_0\in I_L^{\eta}$
\begin{equation}\label{GL}
\begin{array}{rl}
G_{L}(y_0):=&\big\langle\nabla F(y_0,y_1),X_{L}(y_0,y_1)\big\rangle\big|_{y_1=\phi(y_0)}
\\
=&W_{L}(y_1)\big(a_LT_LW_R(y_0)-a_RT_R W_L(y_0)\big)\big|_{y_1=\phi(y_0)}\vspace{0.2cm}\\
=& W_{L}(\phi(y_0))(c_0-c_1y_0^2)
\end{array}
\end{equation}
and for $y_0\in I_R^{\eta}$
\begin{equation}\label{GR}
\begin{array}{rl}
G_{R}(y_0):=&\big\langle\nabla F(y_0,y_1),X_{R}(y_0,y_1)\big\rangle\big|_{y_1=\phi(y_0)}
\\
=&W_{R}(y_1)\big(a_LT_LW_R(y_0)-a_RT_R W_L(y_0)\big)\big|_{y_1=\phi(y_0)}\vspace{0.2cm}\\
=& W_{R}(\phi(y_0))(c_0-c_1y_0^2).
\end{array}
\end{equation}

From now on, the argument will be done for $\eta_L$. The same reasoning can be done for $\eta_R$.

On the one hand, assume that $G_{L}$ does not vanish. Then, 
$\sgn(\eta_L'(y_0^*))$ is distinct from zero and coincides for every $y_0^*\in I_L^{\eta}$ such that $\eta_L(y_0^*)=0$. Consequently, from Lemma \ref{lem:analysis} we conclude that $\eta_L$ has at most one zero which is simple.

On the other hand, assume that $G_L$ vanishes. In this case, it vanishes only at $y_0=\widebar{y}_0$ (given in \eqref{eq:puntito}). Notice that $I_L^{\eta}\setminus\{\widebar{y}_0\}=A\cup B,$ where $A$ and $B$ are disjoint intervals and the restricted functions $\eta_L |_{A}$ and $\eta_L |_{B}$ satisfy the hypotheses of Lemma \ref{lem:analysis}. Consequently, each of the restricted functions has at most one zero. In addition, $\eta_L(\widebar{y}_0)\neq0,$ 
otherwise $y_L(\widebar{y}_0)=\phi(\widebar{y}_0)=-\widebar{y}_0$ which cannot happen because the graph of $y_L$ does not intersect the bisector of the fourth quadrant for $y_0>0$.

The remainder of this proof is devoted to showing that $\eta_L$ does not have zeros in $A$ and $B$, simultaneously. In this case, $\eta_L$ will have at most one zero which is simple.

First, it can be seen from Remark \ref{rem:hype} that the function $S_\phi(y_0)=\phi(y_0)+y_0$, $y_0\in I_L^{\eta}$, satisfies $\sgn(S_\phi |_{A})\,\sgn(S_\phi |_{B})<0,$ because the intersection between $\widehat \gamma$ and the bisector of the fourth quadrant only occurs at the point $(\widebar{y}_0,\widebar{y}_1)$,  provided in \eqref{eq:puntito}, and this intersection is transversal. 

Now, suppose, by reduction to absurdity, that there exist $y_a\in A$ and $y_b\in B$ such that $\eta_L(y_a)=\eta_L(y_b)=0.$ Consider the function $S_L(y_0)=y_L(y_0)+y_0$, $y_0\in I_\eta^L.$ Observe that $S_L(y_0)=S_{\phi}(y_0)-\eta_L(y_0)$. Thus, $S_L(y_a)=S_{\phi}(y_a)$ and $S_L(y_b)=S_{\phi}(y_b)$ and, therefore,  $S_L(y_a)S_L(y_b)< 0$. From Bolzano Theorem, there exists $\widecheck y_0\in I_L^{\eta}$ such that $S_L(\widecheck y_0)=0$, that is, $y_L(\widecheck y_0)=-\widecheck y_0$, which contradicts the fact that the graph of $y_L$ does not intersect the bisector of the fourth quadrant for $y_0>0$. It concludes the proof.
\end{proof}

Now, we present and prove the result of uniqueness and stability for hyperbolic limit cycles.

\begin{proposition}
\label{prop:hyperlimitcycle} 
Let be $\xi$ as given in \eqref{eq:xi}.
The differential system \eqref{cf} admits at most one hyperbolic 
limit cycle. If this hyperbolic limit cycle exists, then $\xi\neq0$.
Moreover, it is asymptotically stable (resp.~unstable) provided that $\xi<0$
(resp.~$\xi>0$).
\end{proposition}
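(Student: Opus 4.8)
The plan is to prove uniqueness and stability of hyperbolic limit cycles by combining the sign characterization of $\delta'$ from Proposition \ref{prop:derivdisplafunct} with the intersection analysis of Proposition \ref{prop:cortegamma}. First I would recall that a hyperbolic limit cycle corresponds to a point $y_0^*\in\operatorname{int}(I)$ with $\delta(y_0^*)=0$ and $\delta'(y_0^*)\neq0$, that is, an intersection point $(y_0^*,y_1^*)$ of the two Poincar\'e half-map graphs lying in $\operatorname{int}(\QQ)$ but \emph{not} on $\gamma=F^{-1}(\{0\})$ (since $\operatorname{sgn}(\delta'(y_0^*))=\operatorname{sgn}(F(y_0^*,y_1^*))$). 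By Proposition \ref{prop:neccond}, existence of a limit cycle forces $T_LT_R<0$ and $c_0^2+(c_1c_2)^2\neq0$, so I may work under these necessary conditions.

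The core of the argument is the \emph{generic case} $c_0c_1(c_2^2-c_1c_0)\neq0$. Here $\gamma$ is a non-degenerate hyperbola and, by Remark \ref{rem:hype}, it divides $\QQ$ into two connected components, say $\QQ^+$ and $\QQ^-$ on which $F>0$ and $F<0$ respectively. The key geometric claim I would establish is that \emph{all} hyperbolic intersection points of the two graphs lie in a single one of these two components. To see this, note each graph meets $\widehat\gamma$ (the branch of $\gamma$ inside $\operatorname{int}(\QQ)$) at most once and transversally, by Proposition \ref{prop:cortegamma}. Using this, I would argue that along the portion of $\operatorname{int}(I)$ where the graphs can cross, every crossing has $\delta'$ of one fixed sign: at a crossing $(y_0^*,y_1^*)$ the displacement $\delta$ changes from one side of $\gamma$ to the other only if the graphs cross $\gamma$ in between, but the single transversal intersection of each graph with $\widehat\gamma$ prevents the configuration that would put crossings on both sides. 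Concretely I would apply Lemma \ref{lem:analysis} to $\delta$ restricted appropriately, showing $\operatorname{sgn}(\delta'(y_0^*))$ is nonzero and constant over all zeros $y_0^*$ of $\delta$; hence $\delta$ has at most one zero, giving uniqueness, and that zero is hyperbolic with a definite stability type. To pin the sign to $\xi$, I would invoke Proposition \ref{prop:estabmonodromico} (or the infinity analysis in Proposition \ref{prop:vi-vi}) together with $c_0=a_La_R\xi$: since $\operatorname{sgn}(\delta)$ near the boundary of $I$ is $-\operatorname{sgn}(\xi)$, a unique crossing forces $\delta'(y_0^*)$ to have sign $\operatorname{sgn}(\xi)$, so the limit cycle is attracting when $\xi<0$ and repelling when $\xi>0$, and in particular $\xi\neq0$.

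Finally I would remove the genericity assumption by a \emph{perturbation argument}. A hyperbolic limit cycle is robust: it persists under sufficiently small perturbations of the parameters $(a_L,a_R,T_L,T_R,D_L,D_R)$ and its stability type is preserved, because $\delta'(y_0^*)\neq0$. Given any parameter value with $c_0c_1(c_2^2-c_1c_0)=0$ admitting a hyperbolic limit cycle, I would perturb the parameters slightly into the generic set, preserving $T_LT_R<0$, the sign of $\xi$, and the existence and stability of the limit cycle; uniqueness and the $\xi$-stability rule then hold for the perturbed system and pass to the limit. This extends the conclusion to all admissible parameters.

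The main obstacle I anticipate is the geometric bookkeeping in the generic case: translating ``each graph meets $\widehat\gamma$ at most once transversally'' into ``all crossings of the two graphs lie in one component of $\QQ\setminus\gamma$.'' The delicate point is ruling out a configuration in which the two graphs cross each other once in $\QQ^+$ and once in $\QQ^-$, which would produce two hyperbolic limit cycles of opposite stability; this is exactly where the relative position of the graphs with respect to the bisector (Corollary \ref{bi2quad}) and the single transversal intersection with $\widehat\gamma$ must be combined, likely via a Bolzano/intermediate-value argument on the auxiliary functions $y_0+y_L(y_0)$ and $y_0+y_R(y_0)$ analogous to the one used in the proof of Proposition \ref{prop:cortegamma}.
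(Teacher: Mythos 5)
Your proposal has the right architecture (reduction to the generic case $c_0c_1(c_2^2-c_1c_0)\neq0$ plus a perturbation argument, the hyperbola $\gamma$ separating attracting from repelling crossings, Proposition \ref{prop:cortegamma}, and Lemma \ref{lem:analysis}), and this is indeed the skeleton of the paper's proof. However, the central step --- that all simple zeros of $\delta$ correspond to points in a \emph{single} connected component of $\QQ\setminus\gamma$ --- is exactly the point you flag as ``the main obstacle,'' and your proposed mechanism does not close it. Knowing that each graph meets $\widehat\gamma$ at most once and transversally does \emph{not} by itself forbid the dangerous configuration: both graphs could intersect each other in $H_+$, then each cross $\gamma$ exactly once, and intersect again in $H_-$, producing two hyperbolic limit cycles of opposite stability. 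What rules this out in the paper is \emph{directional} information that your sketch never extracts: the inner products $G_L$ and $G_R$ in \eqref{GL}--\eqref{GR} show that crossings of $\gamma$ by the graphs occur in a definite direction governed by $\sgn(c_0-c_1y_0^2)$, so one of the components acts as a trapping region as $y_0$ increases. Exploiting this requires splitting according to the position of the origin relative to the two graphs, equivalently the signs of $a_L$ and $a_R$ (via Remark \ref{rem:0en0}): when $a_La_R>0$ the graph through the origin is trapped in $H_+$ and all simple zeros have $\sgn(\delta')=\sgn(c_0)$; when $a_L>0$, $a_R<0$ one additionally needs the local expansion \eqref{eq:deltarrollo} at the origin to show the \emph{first} zero already lies in $H_-\cup\gamma$, after which $H_-$ traps; and when $a_L<0$, $a_R>0$ neither graph contains the origin, and the paper needs two further arguments you do not supply: an inversion $Y_0=1/y_0$ transplanting the case-(B) reasoning to infinity (subcase $c_0c_1<0$, using Proposition \ref{prop:sinf}), and, for $c_0c_1>0$, the observation that $\gamma$, the $y_1$-axis and the bisector bound a compact trapping region $\Omega$ that the unbounded graphs therefore can never enter.

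A secondary problem is your method for pinning the stability to $\xi$: you invoke Proposition \ref{prop:estabmonodromico} (or \ref{prop:vi-vi}) to get $\sgn(\delta)=-\sgn(\xi)$ near the boundary of $I$, but Proposition \ref{prop:estabmonodromico} assumes a \emph{unique} monodromic singularity and $T_LT_R<0$, $c_0\neq0$, hypotheses which are not verified in all the configurations compatible with a limit cycle; the paper instead obtains the sign identification case by case from $c_0=a_La_R\xi$ together with the known sign of $a_La_R$ in each case (e.g.\ $\sgn(\delta')=\sgn(c_0)=\sgn(\xi)$ when $a_La_R>0$, and $\sgn(\delta')=-\sgn(c_0)=\sgn(\xi)$ when $a_La_R<0$), never needing those extra hypotheses. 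So the proposal is a correct outline of the strategy but leaves the genuinely hard part --- the trapping-region case analysis that constitutes the bulk of the paper's proof --- unproven.
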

\begin{proof}
First of all, notice that it is enough to prove the result under 
the generic condition 
\begin{equation}
\label{equ:GC}
c_0c_1(c_2^2-c_1c_0)\neq0
\end{equation} 
because, 
due to the persistence under small perturbations of hyperbolic limit cycles and their stability, the proof can be immediately extended to the case $c_0c_1(c_2^2-c_1c_0)=0$.

The generic condition provided in \eqref {equ:GC} 
includes the conditions $c_1(c_2^2-c_1c_0)\neq0$ and  $c_0\neq0$. Recall that the first one implies that the zero set of the function $F$ provided in \eqref{CeF}, $\gamma=F^{-1}(\{0\})$, is a non-degenerate hyperbola in $\mathbb{R}^2$ (see Remark \ref{rem:hype}),
which separates the attracting hyperbolic crossing limit cycles from the repelling ones, as stated in Proposition \ref{prop:derivdisplafunct}. In addition, $c_0\neq0$ implies that the hyperbola does not contain the origin.

In order to prove that the differential system \eqref{cf}, under the generic condition \eqref{equ:GC},
has at most one hyperbolic limit cycle and determine its stability, it is necessary to refine the analysis performed in Proposition \ref{prop:cortegamma} about the relative position of $\gamma$ and the curves $y_1=y_{L}(y_0)$ and $y_1=y_{R}(y_0)$.

From Remark \ref{rem:hype}, it is known that hyperbola $\gamma$ split the fourth quadrant $\QQ$ into two disjoint connected sets,  namely
\begin{equation*}\label{Hpm}
H_{\pm}:=\{(y_0,y_1)\in \QQ:\,  \sgn(F(y_0,y_1))=\pm\sgn(c_0)\}.
\end{equation*}
Observe that the connected component  $H_-$ could be the empty set, but $(0,0)\in H_+$ given that $F(0,0)=c_0$. 

As stated in Theorem \ref{teo:defyL}\ref{rm:derivadaL}, the graph of the Poincar\'e half-map $y_L$ is the portion included in $\QQ$ of a particular orbit of the 
cubic vector field $X_L$ provided in \eqref{dy1L}
that evolves forward as $y_0>0$ increases. In a similar way,  from Theorem \ref{teo:defyR}\ref{rm:derivadaR}, the graph of Poincar\'e half-map $y_R$ is the portion included in $\QQ$ of a particular orbit of the 
cubic vector field $X_R$ provided in \eqref{dy1R}
that evolves forward as $y_0>0$ increases. Thus, the signs of the functions $G_L$ and $G_R,$ defined in \eqref{GL} and \eqref{GR}, provide the direction of the intersection between the graphs of the Poincar\'{e} half-maps and $\gamma$.

Since the relative position between $\gamma$ and the origin $(0,0)$ is known, it is natural to conclude our proof by distinguishing the relative positions between the origin and the graphs of the Poincar\'e half-maps, namely: (A) just one of the graphs contains the origin, (B) both of them contain the origin, and  (C) none of them contain the origin. 

From Proposition \ref{prop:neccond}, if the inequality $T_LT_R\geqslant0$ holds, then no limit cycles exist. Accordingly, from now on, it is assumed that $T_LT_R<0$. 
Moreover, the generic condition \eqref{equ:GC} implies that $c_0\neq0$ and then, from \eqref{eq:c0c1c2}, $ a_La_R\neq0$. Therefore, taking Remark \ref{rem:0en0} into consideration, case (A) is equivalent to $a_L a_R>0$, case (B) is equivalent to $a_L>0$ and $a_R<0,$ and  case (C) is equivalent to $a_L<0$ and $a_R>0.$

\medskip

 \noindent{\bf Case (A).} This case corresponds to $a_L a_R>0$. 
Since $T_L T_R<0$, from the second equalities of expressions \eqref{GL} and \eqref{GR}, one obtains that $\sgn(G_{L}(y_0))$ and $\sgn(G_{R}(y_0))$ are constant for $y_0\in I_{\eta}^L$ and $y_0\in I_{\eta}^R$, respectively, and coincide. Here, $I_{\eta}^L$ and $I_{\eta}^R$ are, respectively, the intervals of definition of the functions $\eta_L$ and $\eta_R$ defined in \eqref{eta}.  Thus, since $0$ belongs to the closure of one of the intervals, $I_{\eta}^L$ or $I_{\eta}^R,$ from the third equalities of expressions \eqref{GL} and \eqref{GR}, it follows that
$$\sgn(G_{L}(y_0))=\sgn(c_0)\neq0\quad\text{and}\quad\sgn(G_{R}(y_0))=\sgn(c_0)
\ne 0,$$ for every $y_0\in I_{\eta}^L$ and $y_0\in I_{\eta}^R$, respectively.

This means that, if one of the curves $y_1=y_{L}(y_0)$ or $y_1=y_{R}(y_0)$ intersects $\gamma$, then it crosses $\gamma$ from $H_-$ to $H_+$ as $y_0$ increases. 
In this way, the region $H_+$ may be understood as a trapping region for the graphs of the Poincar\'{e} half-maps as $y_0$ increases (that is, once one of the Poincar\'{e} half-maps enters in this region, cannot leave it as $y_0$ increases).
Since the origin is a point of $H_+$ and the graph of one of the Poincar\'{e} half-maps contains the origin, then this graph is a subset of $H_+$ (see Figure \ref{fig_casoA}).
Hence, if a point $(y_0^*,y_1^*)$ corresponds to a limit cycle, it must be contained in $H_+$. From Proposition \ref{prop:derivdisplafunct}, $\sgn(\delta'(y_0^*))= \sgn(F(y_0^*,y_1^*))=\sgn(c_0)\neq0$. Therefore, from Lemma \ref{lem:analysis}, $\delta$ has at most one zero which provides the uniqueness of the limit cycles.  In addition, since $a_L a_R>0$, the equality $\sgn(c_0)=\sgn(a_RT_L-a_LT_R)=\sgn(\xi)$ holds and the proof is concluded for case (A).

\begin{figure}[H]
    \begin{center}
     \includegraphics[width=7cm]{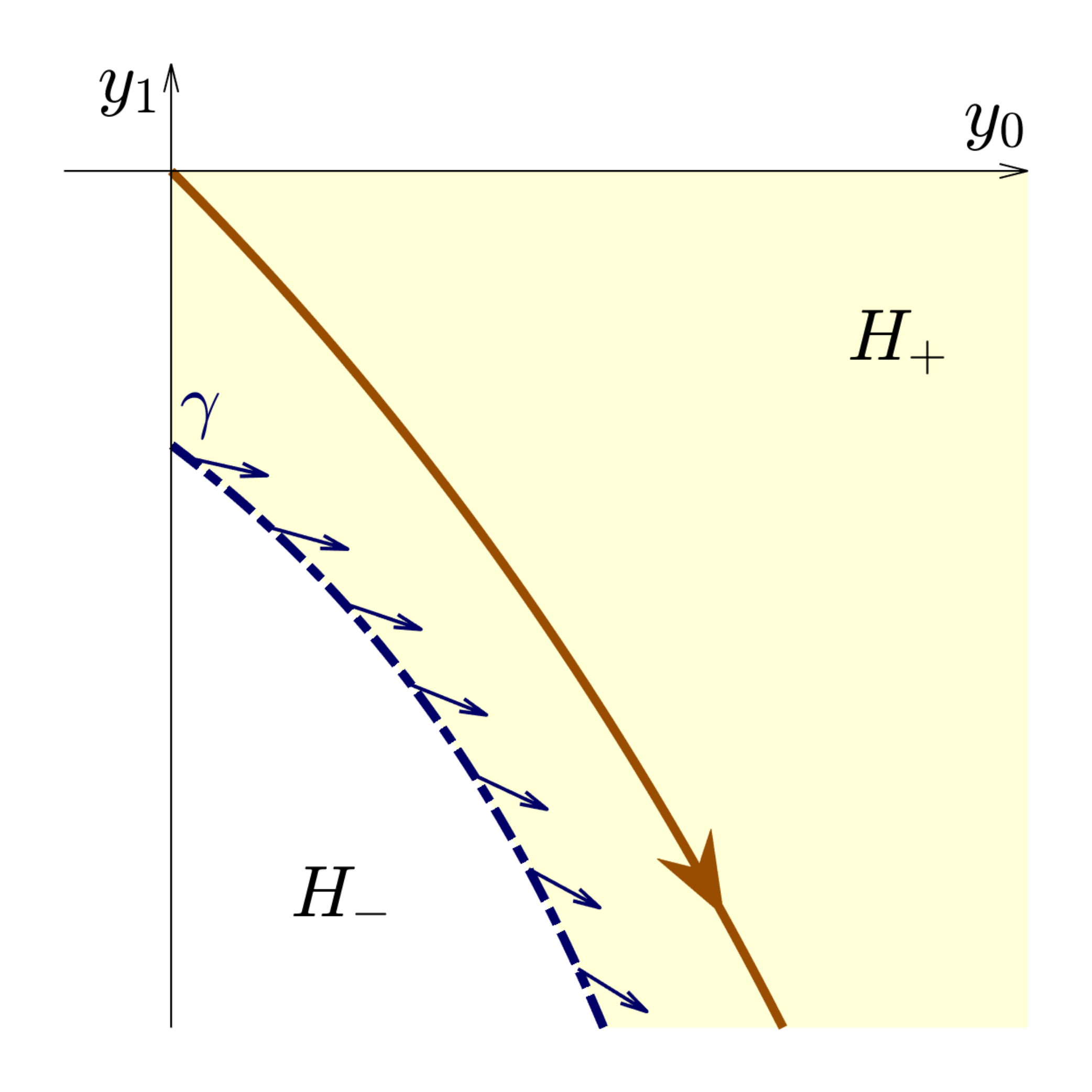}
    \end{center}
     \caption{In case (A), any intersection between the Poincar\'{e} half-maps and $\gamma$ occurs from $H_-$ to $H_+$ as $y_0$ increases. Since $(0,0)\in H_+,$ the Poincar\'{e} half-map starting at the origin do not cross $\gamma$, which implies the uniqueness of hyperbolic limit cycles. The dashed curve represents the curve $\gamma$; the arrows represent the vectors fields $X_L$ and $X_R$, which are proportional over $\gamma$; and the continuous curve represents the  graph of the Poincar\'{e} half-map starting at the origin which is trapped in $H_+$}\label{fig_casoA}
\end{figure}

\medskip

\noindent{\bf Case (B).} In this case, $a_L>0$ and $a_R<0$ and so $\xi c_0<0$. Notice that
\begin{itemize}
\item $y_L(0)=y_R(0)=0$ or, equivalently, $\delta(0)=0$;
\item $F(0,0)=c_0$ and, therefore, $(0,0)\in H_+$;
\item and, from \eqref{eq:deltarrollo}, $\sgn(\delta(y_0))=-\sgn(\xi)=\sgn(c_0)$ for $y_0>0$ sufficiently small.
\end{itemize}
Consequently, there exist $\varepsilon>0$ such that $(y_0,y_L(y_0)),$ $ (y_0,y_R(y_0))\in H_+$ and $ \sgn(\delta(y_0))=\sgn(c_0)$ for every $y_0\in(0,\varepsilon)$.

If the displacement function $\delta$ does not vanish for any $y_0>0$, then no limit cycles exist. Otherwise, there exists 
$\widehat{y}_0>\varepsilon$ such that $\delta(\widehat{y}_0)=0$ and $\xi \delta(y_0)<0$ for any $y_0\in(0,\widehat{y}_0)$. Consequently, $\xi \delta'(\widehat{y}_0)\geqslant0.$  From Proposition \ref{prop:derivdisplafunct},  $\sgn(F(\widehat y_0,y_L(y_0))=\sgn(\delta'(\widehat y_0))$ and, then, taking into account that $\xi c_0<0$, we have that either $\sgn(F(\widehat y_0,y_L(\widehat y_0)))=0$ or $\sgn(F(\widehat y_0,y_L(\widehat y_0)))=\sgn(\xi)=-\sgn(c_0)$. Therefore, the point $(\widehat{y}_0,y_L(\widehat{y}_0))=(\widehat{y}_0,y_R(\widehat{y}_0))$ belongs to $H_-\cup\gamma$. This means that the graphs of both Poincar\'e half-maps, $y_L$ and $y_R$, have intersected $\gamma$ at the points, let us say $(y_0^L,y_1^L)$ and $(y_0^R,y_1^R)$, respectively. Notice that $\widecheck y_0:=\max\{y_0^L,y_0^R\}\leq \widehat y_0$. Hence, from Proposition \ref{prop:cortegamma}, the region $H_-$ may be understood as a trapping region for the graphs of the Poincar\'{e} half-maps as $y_0$ increases (see Figure \ref{fig_casoB}).  Consequently, if $y_0^*>\widecheck y_0$ is such that $\delta(y_0^*)=0$, then $\sgn(\delta'(y_0^*))=-\sgn(c_0)$. From Lemma \ref{lem:analysis}, $\delta$ has at most one zero in $I\cap (\widecheck y_0,+\infty)$. This implies the uniqueness of hyperbolic limit cycles, because $\delta$ does not have simple zeros for $y_0\leq \widecheck y_0.$  In addition, since $a_L a_R<0$, the equality $\sgn(c_0)=-\sgn(a_RT_L-a_LT_R)=-\sgn(\xi)$ holds and the proof is concluded for case (B).

\begin{figure}[H]
    \begin{center}
     \includegraphics[width=7cm]{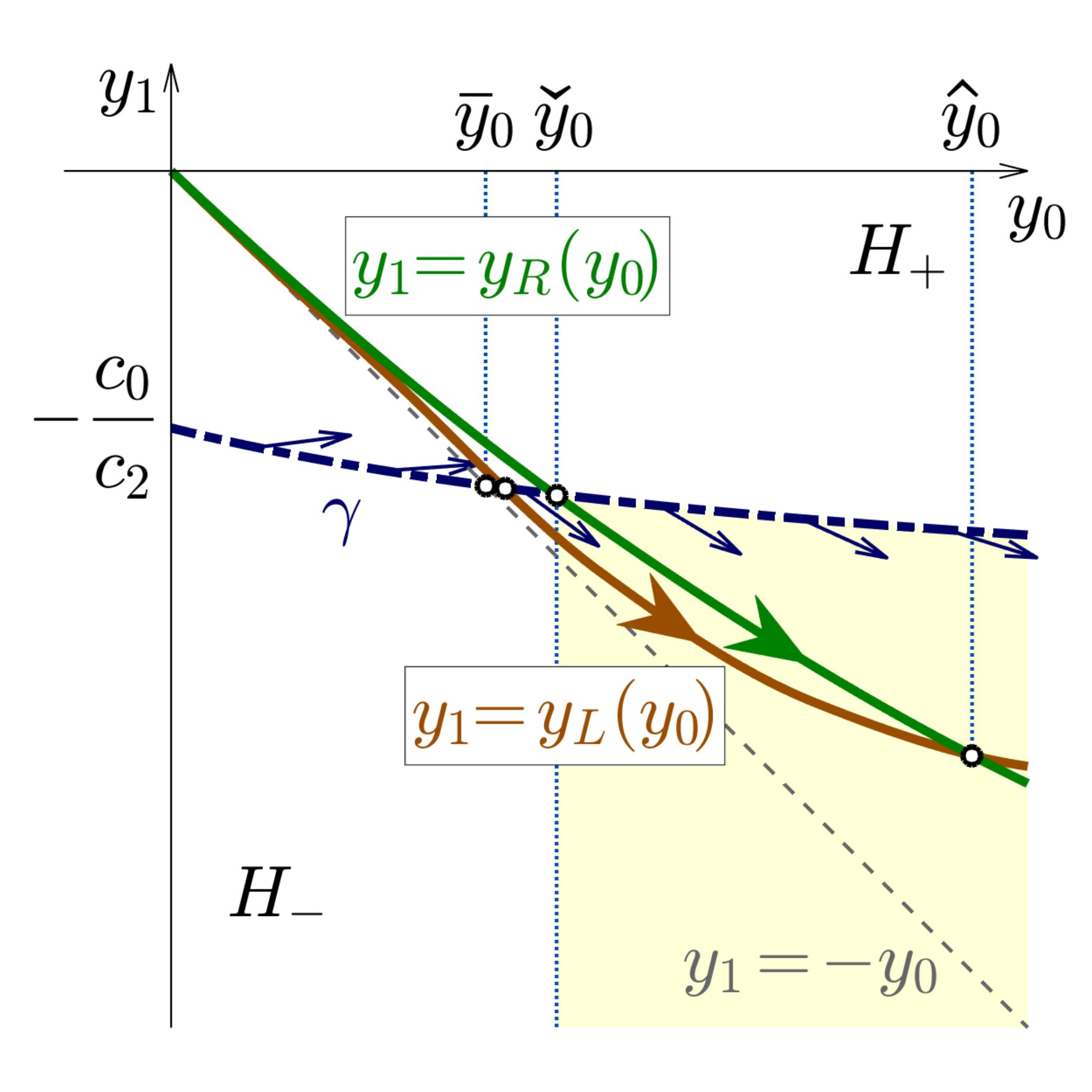}
    \end{center}
     \caption{In case (B), if the displacement function $\delta(y_0)=y_R(y_0)-y_L(y_0)$ vanishes at $\hat y_0$, then the Poincar\'{e} half-maps must intersect $\gamma$. Since such maps intersect $\gamma$ at most  once, they do not intersect $\gamma$ for $y_0>\widecheck y_0$, which implies the uniqueness of hyperbolic limit cycles. The dashed curve represents the curve $\gamma$; the arrows represent the vectors fields $X_L$ and $X_R$, which are proportional over $\gamma$; and the continuous curves represent the graphs of the Poincar\'{e} half-maps which are trapped in the colored region for $y_0>\widecheck y_0$.}\label{fig_casoB}
\end{figure}

\medskip

\noindent{\bf Case (C).} Now, 
$a_L<0$ and $a_R>0$. From \eqref{integralF} and \eqref{integralB}, the existence of the Poincar\'{e} half-maps implies that $4D_L-T_L^2>0$ and $4D_R-T_R^2>0$, so the differential system \eqref{cf} has exactly two focus equilibrium points and
the maps $y_L(y_0)$ and $y_R(y_0)$ tend to $-\infty$ as $y_0$ goes to $+\infty$. Moreover, the infinity is monodromic and, since $T_L T_R<0$, from Proposition \ref{prop:vi-vi}, its stability is characterized by the sign of value $c_\infty$.

Next, it is suitable to consider separately the cases (C1) $c_0c_1<0$ and (C2) $c_0c_1>0$, because they show certain analogies with cases (B) and (A), respectively.

\medskip

\noindent {\bf (C1)} Suppose that $c_0c_1<0$. From expression \eqref{c0c1cinfty} it is immediate that
\[
\frac{a_L}{T_L^2}{c_\infty}c_0=\frac{D_LT_R}{a_La_RT_L}c_0^2-c_1c_0,
\]
which implies that $c_0c_\infty<0$ and so $c_1 c_{\infty}>0$.
Therefore, by Proposition \ref{prop:vi-vi},
the infinity is attracting (resp. repelling) when $c_1>0$ (resp. $c_1<0$).

Now, we perform a change of variables to transform the infinity into the origin for the purpose of applying a similar reasoning to case (B). 
Let us consider the function
\[
\Delta(Y_0)=\left\{
\begin{array}{ccl}
{\displaystyle \frac{1}{y_L(1/Y_0)}- \frac{1}{y_R(1/Y_0)} }& \mbox{if}  & Y_0\ne 0 \,\, \mbox{and} \,\, 1/Y_0\in I, \\
\noalign{\medskip}
0 & \mbox{if}  & Y_0=0,
\end{array}
\right.
\]
where $I$ is the domain of the displacement function $\delta$ provided in \eqref{displacement}. Notice that $\Delta$ is a continuous function at $Y_0=0$ and its first derivative on the right is
$$
  \Delta'(0^+)=\lim_{Y_0\searrow0} \frac{\Delta(Y_0)}{Y_0}=\lim_{y_0\to+\infty} \frac{y_0}{y_L(y_0)}- \frac{y_0}{y_R(y_0)}.
$$
From Proposition \ref{prop:sinf}, $\sgn\left( \Delta'(0^+)\right)=\sgn(\mu)$, where $\mu$ is given in \eqref{eq:mu}. In addition, from the proof of Proposition \ref{prop:vi-vi}, with $T_L T_R<0$, one has that $\sgn\left( \Delta'(0^+)\right)=\sgn(\mu)=\sgn(c_\infty)=\sgn(c_1)$.

If $\Delta(Y_0^*)=0$, with $Y_0^*>0$, then the differential system \eqref{cf} has a  periodic orbit corresponding to $(y_0^*,y_1^*)=(1/Y_0^*,1/Y_1^*)$, being $Y_1^*=1/y_L(1/Y_0^*)=1/y_R(1/Y_0^*)<0$. Moreover, the equality $\sgn(\Delta'(Y_0^*))=-\sgn(\delta'(y_0^*))$ holds and, consequently, the periodic orbit is a hyperbolic  limit cycle provided that 
$\Delta'(Y_0^*)\ne 0$.

By applying the change of variables  $y_0=1/Y_0, y_1=1/Y_1$,
for $Y_0>0$ and $Y_1<0$, to function $F$ given in \eqref{CeF}, we obtain
$$
  F(y_0,y_1)=\frac{c_1+c_0Y_0Y_1+c_2(Y_0+Y_1)}{Y_0Y_1}.
$$
From Proposition \ref{prop:derivdisplafunct}, since  $Y_0^* Y_1^*<0$, then  $\sgn(\Delta'(Y_0^*))=-\sgn(\delta'(y_0^*))=-\sgn(F(y_0^*,y_1^*))=\sgn(\widetilde F(Y_0^*,Y_1^*))$, where
\[
\widetilde F(Y_0,Y_1)=c_1+c_0Y_0Y_1+c_2(Y_0+Y_1).
\]

Now, since $\Delta(0)=0$, $\sgn\left(\Delta'\left(0^+\right)\right)=\sgn(c_1)$, and $\widetilde F(0,0)=c_1\ne0$, one has 
\begin{itemize}
\item $\Delta(0)=0$;
\item $\widetilde F(0,0)=c_1$;
\item and $\sgn(\Delta(Y_0))=\sgn(c_1)$ for $Y_0>0$ sufficiently small.
\end{itemize}
Hence, an analogous reasoning to case (B) provides that 
a hyperbolic limit cycle corresponding to a point $(Y_0^*,Y_1^*)=(1/y_0^*,1/y_1^*)$ satisfies $\sgn(\Delta'(Y_0^*))=-\sgn(c_1)$, therefore it is unique and asymptotically stable (resp.~unstable) provided that $\xi<0$ 
(resp.~$\xi>0$), because $\sgn(\delta'(y_0^*))=-\sgn(\Delta'(Y_0^*))=\sgn(c_1)=-\sgn(c_0)=\sgn(\xi).$

\medskip

\noindent {\bf (C2)} Suppose that $c_0c_1>0$. Without loss of generality, we can assume
that $T_L>0$ and $T_R<0$. Indeed, the case $T_R>0$ and $T_L<0$ can be reduced to the previous one by the change of variables and time rescaling $(t,y)\mapsto (-t,-y)$. 

Since $a_L<0$ and $T_L>0$, from \eqref{integralF},  it follows that $y_L(0)<0$. Analogously, since $a_R>0$ and $T_R<0$, from \eqref{integralB}, it follows that $y_R(0)<0$. 
Thus,  according to Corollary \ref{bi2quad}, the graphs of both Poincar\'e half-maps must be located below the bisector of the fourth quadrant.

Now, we will prove that the graphs of the Poincar\'{e} half-maps are included in $H_-$. From relationship \eqref{ecu:relationc0c1c2}, one obtains
\[
\frac{c_2}{c_1}a_L T_L=-a_L^2-\frac{c_0}{c_1} D_L\quad \mbox{and} \quad \frac{c_2}{c_0}a_RT_R=-\frac{c_1}{c_0}a_R^2-D_R.
\]
Hence, $c_1 c_2>0$ and  $c_0 c_2>0$. As a consequence, from Remark \ref{rem:hype}, the center of the hyperbola $\gamma$ is located at the third quadrant, $\gamma$ intersects the axis $y_0=0$ for the value $y_1=-c_0/c_2<0$ and the bisector of the fourth quadrant at the point given in \eqref{eq:puntito}. In this way, the curve $\gamma$, the axis $y_0=0$,  and the bisector of the fourth quadrant define a bounded region $\Omega\subset H_+$.

From expressions  \eqref{GL} and \eqref{GR}, the orbits of the vector fields $X_L$ and $X_R$, as $y_0$ increases, cross $\gamma$ from $H_-$ to $H_+$ only for $0<y_0<\sqrt{c_0/c_1}$. Accordingly, $\Omega$ is a bounded trapping region as $y_0$ increases for the graphs of the Poincar\'e half-maps. However, since these graphs are unbounded, they cannot enter $\Omega$ and, therefore, they do not intersect $\gamma$  (see Figure \ref{fig_casoC}). This implies that they are included in $H_-$ and the conclusion of case (C2) follows by a similar reasoning to case (A).

\begin{figure}[H]
    \begin{center}
     \includegraphics[width=7cm]{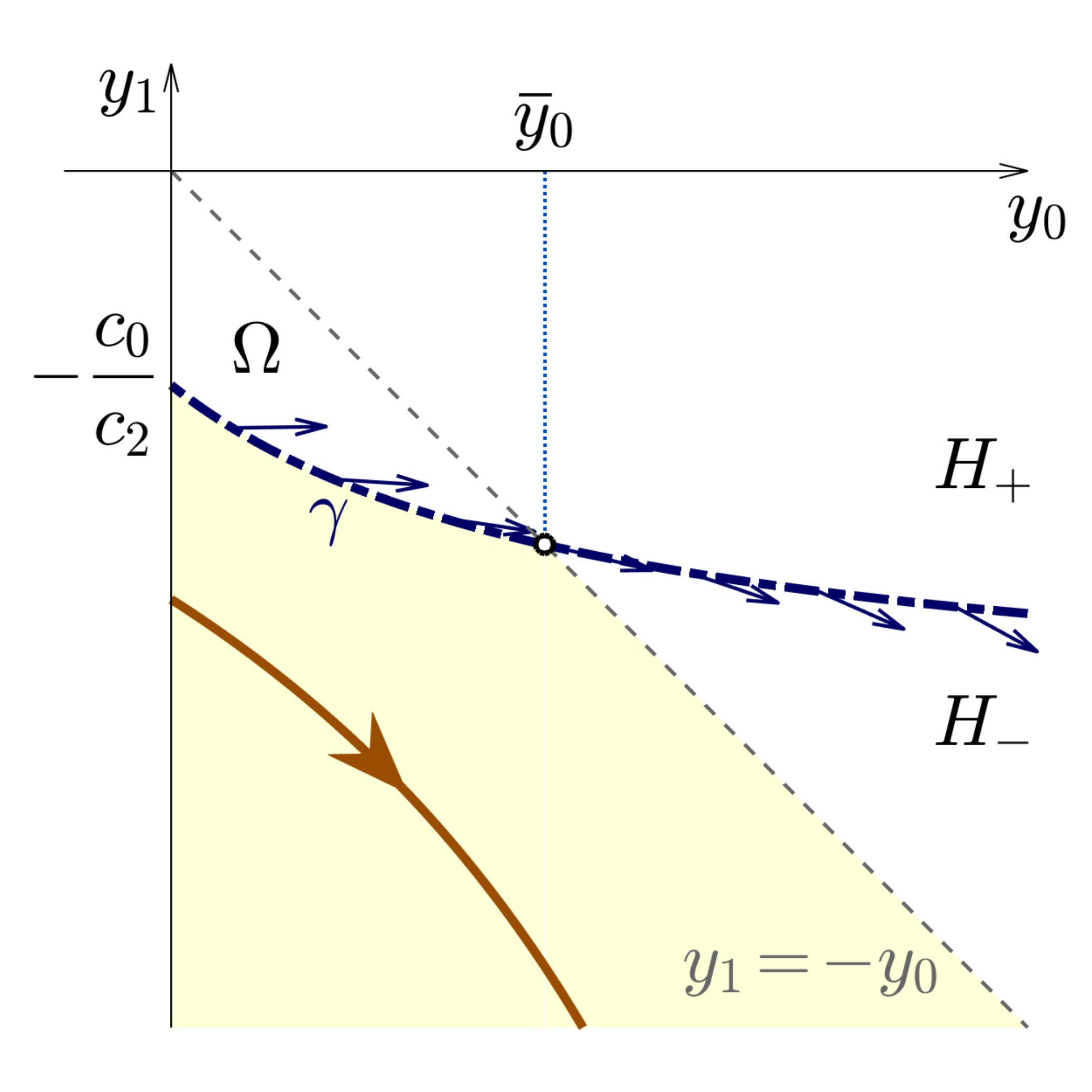}
    \end{center}
     \caption{In case (C2), if the one of the Poincar\'{e} half-maps enters the region $\Omega$ it cannot leave this region. Since $\Omega$ is bounded and, in this case, the Poincar\'{e} half-maps are unbounded, they cannot enter the region $\Omega$. Therefore, such maps do not intersect $\gamma$, which implies the uniqueness of hyperbolic limit cycles. The dashed curve represents the curve $\gamma$; the arrows represent the vectors fields $X_L$ and $X_R$, which are proportional over $\gamma$; and the continuous curve represents one of the graphs of the Poincar\'{e} half-maps which is  trapped in the colored region.}\label{fig_casoC}
\end{figure}
\end{proof}

\section{Nonexistence of degenerate limit cycles} \label{sec:dlc}
From Proposition \ref{prop:hyperlimitcycle}, we have proven that if a hyperbolic limit cycle exists for piecewise linear differential system \eqref{cf}, then it is unique and its stability is determined by the sign of $\xi$ defined in \eqref{eq:xi}. Hence, in this section, we conclude the proof of Theorem \ref{main} by providing the nonexistence of degenerate limit cycles of the differential system \eqref{cf}. 
 
\begin{proposition}\label{prop:dlc}
The differential system \eqref{cf} does not admit degenerate limit cycles.
\end{proposition}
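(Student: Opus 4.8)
The plan is to characterize a degenerate limit cycle as a non-simple zero of the displacement function and then to show that the local obstruction to such a zero, encoded in the second-derivative formula of Proposition~\ref{prop:derivdisplafunct}, is incompatible with the global trapping structure already exploited in Proposition~\ref{prop:hyperlimitcycle}. First I would set up the reduction: a degenerate limit cycle corresponds to a value $y_0^*\in\Int(I)$ with $y_L(y_0^*)=y_R(y_0^*)=:y_1^*<0$ at which $\delta(y_0^*)=0$ and $\delta'(y_0^*)=0$; since $\delta$ is analytic, such a zero is isolated unless $\delta\equiv0$ (a continuum, not a limit cycle), so I may assume $\delta\not\equiv0$. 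By Proposition~\ref{prop:derivdisplafunct}(a), the condition $\delta'(y_0^*)=0$ is equivalent to $F(y_0^*,y_1^*)=0$, i.e. the common point $(y_0^*,y_1^*)$ of the two Poincar\'e half-map graphs lies on the curve $\gamma=F^{-1}(\{0\})$. Throughout I would invoke Proposition~\ref{prop:neccond}, so that $T_LT_R<0$ and $c_0^2+(c_1c_2)^2\neq0$.

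Next I would prove that such a zero is necessarily \emph{double}. By Proposition~\ref{prop:derivdisplafunct}(b), $\delta''(y_0^*)=0$ would force both $c_2y_0^*+c_0=0$ and $c_2y_1^*+c_0=0$; if $c_2\neq0$ this yields $y_0^*=y_1^*$, impossible since $y_0^*>0>y_1^*$, while if $c_2=0$ the necessary condition $c_0^2+(c_1c_2)^2\neq0$ forces $c_0\neq0$, whence $\delta''(y_0^*)\neq0$ directly. Thus every degenerate limit cycle is a semistable (double) cycle. Moreover, from $\sgn(\delta''(y_0^*))=\sgn\!\big(T_L(c_2y_0^*+c_0)\big)$ together with the observation that on $\gamma$ the quantity $c_2y_0+c_0$ vanishes only where $y_1=0$ or at the vertical asymptote $y_0=-c_2/c_1$, this sign is the \emph{same} nonzero value at every point of the arc $\widehat\gamma:=\gamma\cap\Int(\QQ)$ (which is connected by Remark~\ref{rem:hype}).

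The contradiction I would then extract case by case, paralleling the proof of Proposition~\ref{prop:hyperlimitcycle} and reusing the regions $H_\pm$. In cases (A) and (C2) the trapping-region analysis shows that at least one half-map graph is confined to a single component of $\QQ\setminus\gamma$ and never meets $\gamma$, so the common point $(y_0^*,y_1^*)$ cannot lie on $\gamma$ and no degenerate cycle occurs. In cases (B) and (C1) the degenerate point must be the unique (transversal, by Proposition~\ref{prop:cortegamma}) crossing of both graphs with $\gamma$, reached simultaneously. Using the boundary sign of $\delta$ near the monodromic singularity, respectively near infinity (from expansion~\eqref{eq:deltarrollo} and Propositions~\ref{prop:estabmonodromico} and~\ref{prop:vi-vi}), together with Lemma~\ref{lem:analysis} to preclude earlier simple zeros, I would show that $\delta$ keeps the definite sign $\sgn(c_0)$ up to $y_0^*$; a double zero there would then require $\sgn(\delta''(y_0^*))=\sgn(c_0)$, and I would close the argument by checking that the value $\sgn\!\big(T_L(c_2y_0^*+c_0)\big)$ prescribed by Proposition~\ref{prop:derivdisplafunct}(b) equals $-\sgn(c_0)$.

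I expect the main obstacle to be precisely this last sign comparison: determining $\sgn(c_2y_0+c_0)$ along $\widehat\gamma$ (via the geometry of $\gamma$ in Remark~\ref{rem:hype}, e.g. the axis intercept $-c_0/c_2$ and whether the branch is increasing or decreasing) and relating it to $T_L$, $c_0$, and $\xi$. As a safeguard I would keep a perturbative alternative in reserve: a semistable cycle unfolds, under a small admissible variation of the parameters of~\eqref{cf} (which remains sewing in the Li\'enard form), into two hyperbolic limit cycles of \emph{opposite} stability, contradicting the uniqueness and the $\xi$-determined stability established in Proposition~\ref{prop:hyperlimitcycle}; the delicate point there would be exhibiting a parameter direction that genuinely splits the double zero while keeping both half-maps defined on $\Int(I)$.
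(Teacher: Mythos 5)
Your opening reduction is correct and coincides with the paper's first step: from Proposition \ref{prop:derivdisplafunct}(b) together with Proposition \ref{prop:neccond} (which gives $T_LT_R<0$ and $c_0^2+(c_1c_2)^2\neq0$), a zero with $\delta(y_0^*)=\delta'(y_0^*)=0$ and $\delta''(y_0^*)=0$ would force $c_2y_0^*+c_0=c_2y_1^*+c_0=0$, hence (since $y_0^*\neq y_1^*$) $c_2=0$ and then $c_0=0$, a contradiction; so every degenerate zero is a double zero. Your treatment of cases (A) and (C2) is also sound, since there every intersection point of the two graphs lies off $\gamma$, where $F\neq0$. The genuine gap is in cases (B) and (C1): the sign identity on which your whole contradiction rests, namely $\sgn\big(T_L(c_2y_0^*+c_0)\big)=-\sgn(c_0)$ along $\widehat\gamma$, is never proved, and as stated it fails in at least one sub-case. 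Normalize case (B) so that $T_L>0>T_R$ (via $(t,y)\mapsto(-t,-y)$) and suppose $c_0c_2>0$. On $\gamma$ one has $c_2y_0+c_0=-y_1(c_1y_0+c_2)$, which cannot vanish on $\widehat\gamma$ (there $y_1<0$ and $y_0\neq-c_2/c_1$), so $\sgn(c_2y_0+c_0)$ is constant on $\widehat\gamma$; letting $y_0\to0^+$ along $\widehat\gamma$, whose closure contains the axis intercept $(0,-c_0/c_2)$ of Remark \ref{rem:hype}, gives $\sgn(c_2y_0^*+c_0)=\sgn(c_0)$, hence $\sgn\big(T_L(c_2y_0^*+c_0)\big)=\sgn(c_0)$. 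But $\sgn(c_0)$ is exactly the sign $\delta''(y_0^*)$ must have at a double zero approached from a region where $\delta$ has sign $\sgn(c_0)$, so no contradiction arises and your route closes nothing here. Moreover, your direct route leans on Remark \ref{rem:hype} and Proposition \ref{prop:cortegamma}, which require $c_1(c_2^2-c_1c_0)\neq0$; unlike hyperbolic cycles, degenerate cycles are not persistent under perturbation, so the non-generic cases cannot be absorbed the way Proposition \ref{prop:hyperlimitcycle} absorbs them and would need a separate argument.

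Your ``safeguard'' is in fact the paper's proof, and the ``delicate point'' you leave in reserve is its entire content. After establishing $\delta''(y_0^*)\neq0$ exactly as you do, the paper exhibits an explicit parameter whose variation moves $\delta$ at $y_0^*$: if $a_L^*>0$, the right-hand side of \eqref{integralF} is $0$, independent of $T_L$, and differentiating the integral identity yields
\[
\frac{\partial y_L}{\partial T_L}(y_0^*;T_L^*)=a_L^*\,\frac{W_L(y_1^*;T_L^*)}{y_1^*}\int_{y_1^*}^{y_0^*}\left(\frac{y}{W_L(y;T_L^*)}\right)^2dy\neq0,
\]
while if $a_L^*<0$ (so that $4D_L^*-(T_L^*)^2>0$), the rescaling $Y=y/a_L$ removes $a_L$ from the right-hand side of \eqref{integralF} and differentiation in $a_L$ gives a quantity whose sign is pinned down by Proposition \ref{rm:signoy0+y1}. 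Either derivative being nonzero, the double zero unfolds through a saddle-node bifurcation into two simple zeros, i.e.\ two hyperbolic limit cycles, contradicting Proposition \ref{prop:hyperlimitcycle}; note that the opposite stability you invoke is not needed, since two hyperbolic cycles already violate uniqueness. Until either this derivative computation or a corrected sign analysis along $\widehat\gamma$ is supplied, your proposal does not prove the proposition.
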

\begin{proof}
Assume,  by contradiction,  that there exists a choice of the parameters $$(a_{L},a_{R},T_{L},T_{R},D_{L},D_{R})=(a_{L}^*, a_{R}^*, T_{L}^*, T_{R}^*, D_{L}^*,D_{R}^*)$$ for which the differential system \eqref{cf} admits a degenerate limit cycle passing through $(0,y_0^*)$ and $(0,y_1^*),$ with $y_1^*<0<y_0^*$, that is $\delta(y_0^*)=\delta'(y_0^*)=0$.
 
 A contradiction will be obtained by showing the existence of a saddle-node bifurcation. That is, we will see that, at $y_0^*$ and $(a_{L}^*, a_{R}^*, T_{L}^*, T_{R}^*, D_{L}^*,D_{R}^*)$, the second derivative of the displacement function $\delta$ with respect to $y_0$ and the first derivative with respect to a parameter are distinct from zero. Consequently, two simple zeros of the displacement function $\delta$ will bifurcate from the degenerate zero $y_0^*$. These zeros correspond to two hyperbolic limit cycles of \eqref{cf}, which contradicts Proposition \ref{prop:hyperlimitcycle}. 

Notice that, from Proposition \ref{prop:neccond}, the inequality $\left(a_L^*\right)^2+ \left(a_R^*\right)^2\neq0$ holds. We shall prove the proposition assuming $a_L^*\ne 0$. An analogous reasoning can be done for the case $a_R^*\ne0$. 

Let $c_0^*$ and $c_2^*$ as defined in \eqref{eq:c0c1c2} for the above fixed parameters.
Since  $\delta(y_0^*)= \delta'(y_0^*)=0$, from
Proposition \ref{prop:derivdisplafunct}, one obtains that 
\[
\sgn(\delta''(y_0^*))=\sgn\left(T_L^* \left( c_2^* y_0^*+c_0^*\right) \right)=\sgn\left(T_R^* \left( c_2^* y_1^*+c_0^*\right) \right),
\]
and so $\sgn(\delta''(y_0^*))\neq0$. Indeed, $y_0^*y_1^*<0$ and, from Proposition \ref{prop:neccond}, $(c_0^*)^2+(c_2^*)^2\neq0$ and $T_L^* T_R^*\neq0$.

Now, depending on the sign of $a_L^*$, we choose either $T_L$ or $a_L$ as the bifurcation parameter in order to unfold two limit cycles.

First, suppose that $a_L^*>0.$ Assume that the parameter $T_L$ is taken in a small neighborhood of  $T_L^*$ and that the other parameters are fixed as $a_{L}=a_{L}^*$, $a_{R}=a_{R}^*$, $T_{R}=T_{R}^*,$ $D_{L}=D_{L}^*,$ and $D_{R}=D_{R}^*.$  Notice that the corresponding displacement function $\delta$, the Poincar\'e half-map $y_L$, and the polynomial function $W_L$  vary with the parameter $T_L$. 
Since $a_L^*> 0$, the forward Poincar\'e half-map $y_L$ is provided by expression \eqref{integralF} as
\[
\int_{y_L(y_0;T_L)}^{y_0}\dfrac{-y}{W_L(y;T_L)}dy=0,
\]
being 
$W_L(y;T_L)=D_L^*y^2-a_L^*T_Ly+\left( a_L^*\right)^2$. 
Therefore,
\[
\frac{\partial y_L}{\partial T_L}(y_0^*;T_L^*)=a_L^*
\frac{W_L(y_1^*;T_L^*)}{y_1^*}
\int_{y_1^*}^{y_0^*}\left(\frac{y}{W_L(y;T_L^*)}\right)^2dy\ne 0.
\]
Since the displacement function \eqref{displacement} satisfies
\[
\frac{\partial\delta}{\partial T_L}\left(y_0^*;T_L^*\right)=-\frac{\partial y_L}{\partial T_L}\left(y_0^*,T_L^*\right)\neq0,\]
  the proof follows for the case $a_L^*>0$.

Finally, suppose that $a_L^*<0$. Assume that the parameter $a_L$ is taken in a small neighborhood of  $a_L^*$ and that the other parameters are fixed as $a_{R}=a_{R}^*$, $T_{L}=T_{L}^*,$ $T_{R}=T_{R}^*,$ $D_{L}=D_{L}^*$, and $D_{R}=D_{R}^*.$ Notice that the corresponding displacement function $\delta$, the Poincar\'e half-map $y_L$, and the polynomial function $W_L$
now vary with the parameter $a_L$.  In this case, the inequality $4D_L^*-\left(T_L^*\right)^2>0$ holds and the expression \eqref{integralF} writes as
\begin{equation}
\label{eq:integralFdege}
\int_{y_L(y_0;a_L)}^{y_0}\dfrac{-y}{W_L(y;a_L)}dy=\frac{2\pi T_L^*}{D_L^*\sqrt{4D_L^*-\left(T_L^*\right)^2}},\end{equation}
being $W_L(y;a_L)=D_L^*y^2-a_LT_L^*y+ a_L^2$.
The changes of variable $Y=y/a_L$ transforms equation \eqref{eq:integralFdege} into the equation
\[
\int_{y_L(y_0;a_L)/a_L}^{y_0/a_L}\dfrac{-Y}{W_L(Y;1)}dY=\frac{2\pi T_L^*}{D_L^*\sqrt{4D_L^*-\left(T_L^*\right)^2}}.
\]
 Thus, it is easy to see that
\[
\frac{\partial y_L}{\partial a_L}(y_0^*;a_L^*)=\frac{\left(y_0^*-y_1^* \right)\left(T_L^*y_0^*y_1^*-a_L^*\left(y_0^*+y_1^* \right) \right)}{y_1^*W_L(y_0^*;a_L^*)}\, .
\]
Observe that $\sgn(T_L^*y_0^*y_1^*)=-\sgn(T_L^*)$ and, from Proposition \ref{rm:signoy0+y1}, $\sgn\left(-a_L^*\left(y_0^*+y_1^* \right)\right)=-\sgn(T_L^*)$. Hence,
\[
\sgn\left(T_L^*y_0^*y_1^*-a_L^*\left(y_0^*+y_1^* \right) \right)=-\sgn\left(T_L^*\right)\ne 0,
\]
which implies that
\[
\frac{\partial\delta}{\partial a_L}\left(y_0^*;a_L^*\right)=-\frac{\partial y_L}{\partial a_L}\left(y_0^*;a_L^*\right)\neq0.
\]
 It concludes this proof.
\end{proof}

\section*{Acknowledgements}

We thank the referees for the helpful comments and suggestions.

The authors thank Espa\c{c}o da Escrita -- Pr\'{o}-Reitoria de Pesquisa -- UNICAMP for the language services provided.

VC is partially supported by the Ministerio de Ciencia, Innovaci\'{o}n y Universidades, Plan Nacional I+D+I cofinanced with FEDER funds, in the frame of the project PGC2018-096265-B-I00. FFS is partially supported by the Ministerio de Econom\'{i}a y Competitividad, Plan Nacional I+D+I cofinanced with FEDER funds, in the frame of the project MTM2017-87915-C2-1-P. VC and FFS are partially supported by the Ministerio de Ciencia e Innovaci\'on, Plan Nacional I+D+I cofinanced with FEDER funds, in the frame of the project PID2021-123200NB-I00. VC and FFS are partially supported by the Consejer\'{i}a de Educaci\'{o}n y Ciencia de la Junta de Andaluc\'{i}a (TIC-0130, P12-FQM-1658). VC and FFS are partially supported by the Consejer\'{i}a de Econom\'{i}a, Conocimiento, Empresas y Universidad de la Junta de Andaluc\'{i}a (US-1380740, P20-01160).
DDN is partially supported by S\~{a}o Paulo Research Foundation (FAPESP) grants 2022/09633-5, 2021/10606-0, 2019/10269-3, and 2018/13481-0, and by Conselho Nacional de Desenvolvimento Cient\'{i}fico e Tecnol\'{o}gico (CNPq) grants 438975/2018-9 and 309110/2021-1.

\section*{Declarations}
\subsection*{Ethical Approval} Not applicable
\subsection*{Competing interests} To the best of our knowledge, no conflict of interest, financial or other, exists.
\subsection*{Authors' contributions} All persons who meet authorship criteria are listed as authors, and all authors certify that they have participated sufficiently in the work to take public responsibility for the content, including participation in the conceptualization, methodology, formal analysis, investigation, writing-original draft preparation and writing-review \& editing.
\subsection*{Availability of data and materials} Data sharing not applicable to this article as no datasets were generated or analyzed during the current study.

\bibliographystyle{abbrv}
\bibliography{references.bib}

\end{document}